\newtheorem{theorem}{Theorem}[section]
\newtheorem{theorem-definition}[theorem]{Theorem-Definition}
\newtheorem{theorem-construction}[theorem]{Theorem-Construction}
\newtheorem{lemma-definition}[theorem]{Lemma--Definition}
\newtheorem{lemma-construction}[theorem]{Lemma--Construction}
\newtheorem{lemma}[theorem]{Lemma}
\newtheorem{proposition}[theorem]{Proposition}
\newtheorem{corollary}[theorem]{Corollary}
\newtheorem{conjecture}[theorem]{Conjecture}
\theoremstyle{definition}
\newtheorem{definition}[theorem]{Definition}
\newtheorem{remark}[theorem]{Remark}
\newtheorem{notation}[theorem]{Notation}
\newtheorem{example}[theorem]{Example}
\newcommand{\old}[1]{}
\newcommand{\Z}{{\mathbb Z}}
\newcommand{\R}{{\mathbb R}}
\newcommand{\C}{{\mathbb C}}
\newcommand{\Q}{{\mathbb Q}}
\newcommand{\T}{{\mathbb T}}
\renewcommand{\P}{{\mathbb P}}
\newcommand{\extp}{\@ifnextchar^\@extp{\@extp^{\,}}}
\def\@extp^#1{\mathop{\bigwedge\nolimits^{\!#1}}}
\newcommand\restr[2]{{
  \left.\kern-\nulldelimiterspace 
  #1 
  \vphantom{\big|} 
  \right|_{#2} 
  }}
\definecolor{calpolypomonagreen}{rgb}{0, 0.6, 0.2}
\newcommand{\ra}{\rightarrow}
\newcommand{\be}{\begin{equation}}
\newcommand{\ee}{\end{equation}}
\newcommand{\bt}{\begin{theorem}}
\newcommand{\et}{\end{theorem}}
\newcommand{\bd}{\begin{definition}}
\newcommand{\ed}{\end{definition}}
\newcommand{\bp}{\begin{proposition}}
\newcommand{\ep}{\end{proposition}}
\newcommand{\bl}{\begin{lemma}}
\newcommand{\el}{\end{lemma}}
\newcommand{\bc}{\begin{corollary}}
\newcommand{\ec}{\end{corollary}}
\newcommand{\bcon}{\begin{conjecture}}
\newcommand{\econ}{\end{conjecture}}
\newcommand{\la}{\label}
\newcommand{\w}{{\rm w}}
\tikzset{ 
	mid arrow/.style={postaction={decorate,decoration={
				markings,
				mark=at position .5 with {\arrow{latex}}
	}}},
	mid rarrow/.style={postaction={decorate,decoration={
				markings,
				mark=at position .5 with {\arrow{latex reversed}}
	}}},
}
\tikzset{qvert/.style={draw,black,circle,fill=gray,minimum size=5pt,inner sep=0pt}  } 
\tikzset{bvert/.style={draw,circle,fill=black,minimum size=5pt,inner sep=0pt}  } 
\tikzset{wvert/.style={draw,circle,fill=white,minimum size=5pt,inner sep=0pt}  } 
  \newcommand{\cX}{{\mathcal{X}}}
\newcommand{\cH}{{\mathcal{H}}}
\newcommand{\cC}{{\mathcal{C}}}
\newcommand{\cL}{{\mathcal{L}}}
\newcommand{\cO}{{\mathcal{O}}}
\begin{document}


\title{The cluster modular group of the dimer model}
\author{Terrence George and Giovanni Inchiostro}

\newcommand{\Addresses}{{
  \bigskip
  \footnotesize
  \textsc{Brown University\\Providence, RI-02906,USA}\\
  \textit{E-mail address}: \texttt{georgete@umich.edu}\\
  \textit{E-mail address}: \texttt{ginchios@uw.edu}
}}

\maketitle
\begin{abstract}
     Associated to a convex integral polygon $N$ is a  cluster integrable system $\mathcal X_N$ constructed from the dimer model. We compute the group $G_N$ of symmetries of $\mathcal X_N$, called the (2-2) cluster modular group, showing that it is a certain abelian group conjectured by Fock and Marshakov. Combinatorially, non-torsion elements of $G_N$ are ways of shuffling the underlying bipartite graph, generalizing domino-shuffling. Algebro-geometrically, $G_N$ is a subgroup of the Picard group of a certain algebraic surface associated to $N$. 
 \end{abstract}

\bibliographystyle{amsxport}

\section{Introduction}
Domino-shuffling is a technique introduced in \cite{EKLP} to enumerate and generate domino tilings of the Aztec diamond graph, and was used to give the first proof of the arctic circle theorem \cite{JPS}. Domino tilings are dual to the dimer model on the square grid. There are generalizations of domino-shuffling, called \textit{(2-2) cluster modular transformations} for other biperiodic bipartite graphs and they comprise the elements infinite order of a group called the \textit{(2-2) cluster modular group}. This group was studied by Fock and Marshakov \cite{FM16}*{Section 7.3} (under the name group of discrete automorphisms) and they gave an explicit conjecture for its isomorphism type. The goal of this paper is to study these generalized shufflings, and in particular, to compute the (2-2) cluster modular group for any biperiodic bipartite graph. 

(2-2) cluster modular transformations give rise to dynamical systems on the space of weights on bipartite graphs as we now explain. Let $\Gamma$ be a bipartite graph on a torus $\mathbb T$ and let $\mathcal L_\Gamma:=H^1(\Gamma,\C^*)$ be the space of weights on $\Gamma$ (cf. section \ref{dimermodel}). There are two types of local rearrangements of bipartite graphs called \textit{elementary transformations} (see Figure \ref{eteqns}). Each elementary transformation has an associated birational map of weights, characterized by the property that it preserves the dimer partition function up to a constant scaling factor (see for example \cite{GK12}*{Theorem 4.7}). Given a sequence of elementary transformations such that the initial and  final graphs are both $\Gamma$ (which we call a \textit{(2-2) cluster transformation}), composing the induced birational maps of weights gives a birational automorphism of $\mathcal L_{\Gamma}$. The cluster transformation is \textit{trivial} if this induced map on weights is the identity. The \textit{(2-2) cluster modular group} is the group of cluster transformations modulo the trivial ones. 

\begin{remark}The word cluster refers to the fact that there is an underlying cluster algebra structure such that the elementary transformations are mutations (see \cite{GK12}). We include the prefix (2-2) because elementary transformations are a special class of mutations at degree $4$ vertices of the underlying quiver, and are often called \textit{2-2 moves}. The full cluster modular group is much larger, but the other mutations are less natural from the point of view of statistical mechanics.
\end{remark}

\begin{figure}
\begin{subfigure}{0.8 \textwidth}
  \centering
  \hspace*{1.9cm}  
  \includegraphics[width=.8\linewidth]{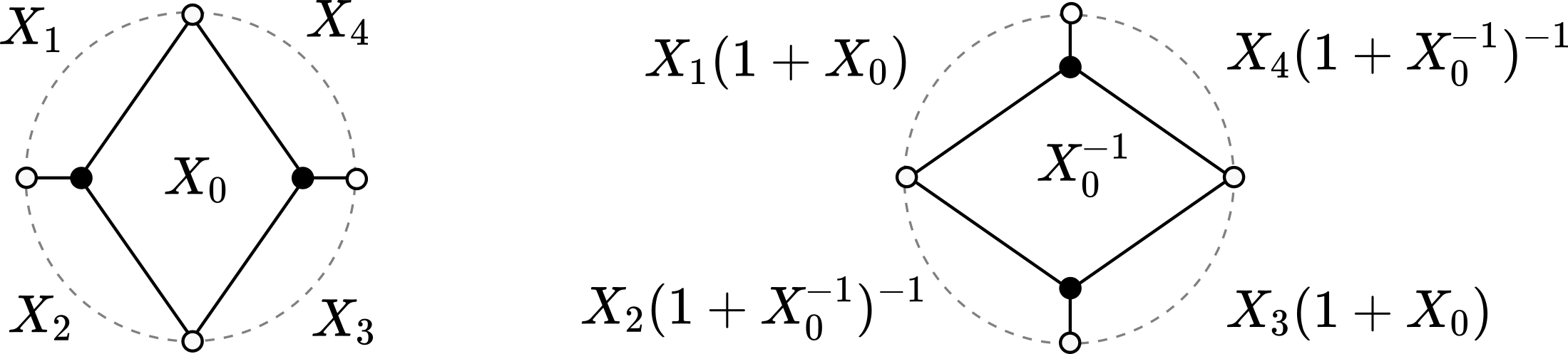}
  \caption{Spider move.}
  \label{eteqns1}
\end{subfigure}
\begin{subfigure}{0.8\textwidth}
  \centering
  \includegraphics[width=.65\linewidth]{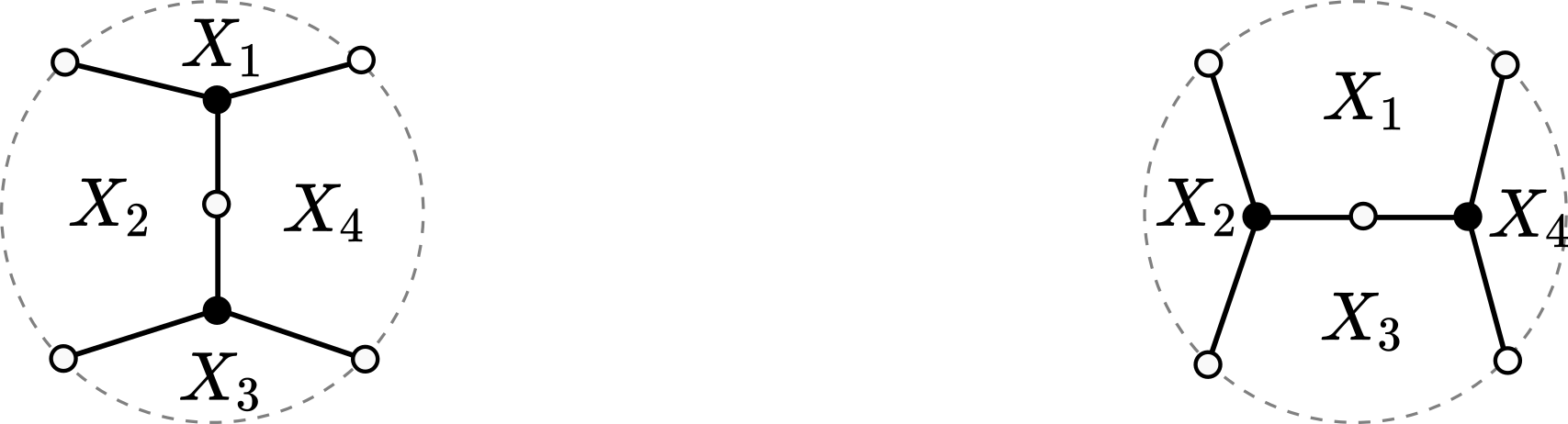}
  \caption{Shrinking/expanding degree $2$ white vertices.}
  \label{eteqns2}
\end{subfigure}
\caption{Elementary transformations along with induced birational maps of weight tori.}
\label{eteqns}
\end{figure}

\begin{figure}
  \centering
  \includegraphics[width=.9\linewidth]{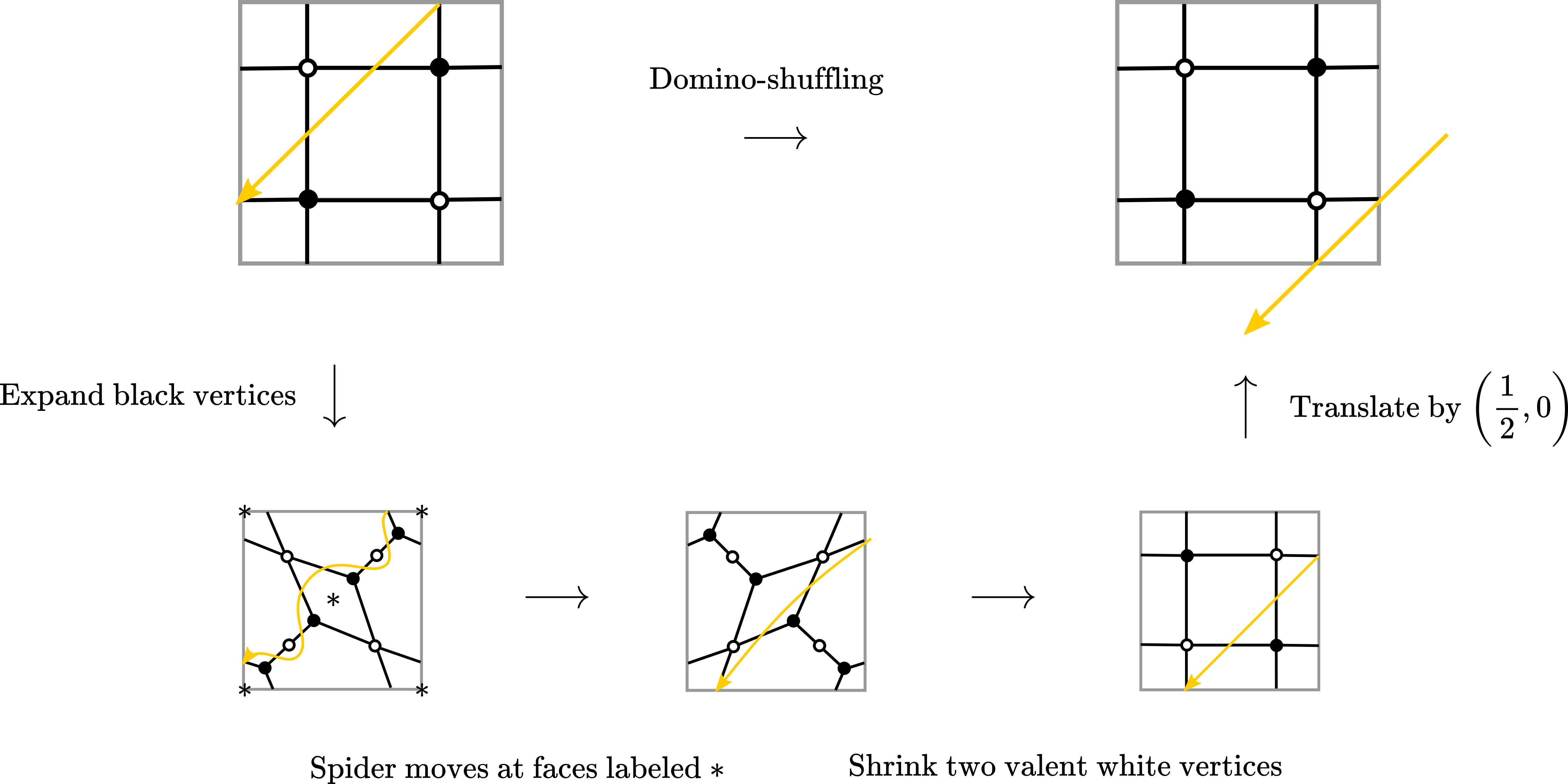}
\caption{The cluster modular transformation called domino-shuffling. }
\label{octrec2}
\end{figure}

\begin{figure}
  \centering
  \includegraphics[width=.3\linewidth]{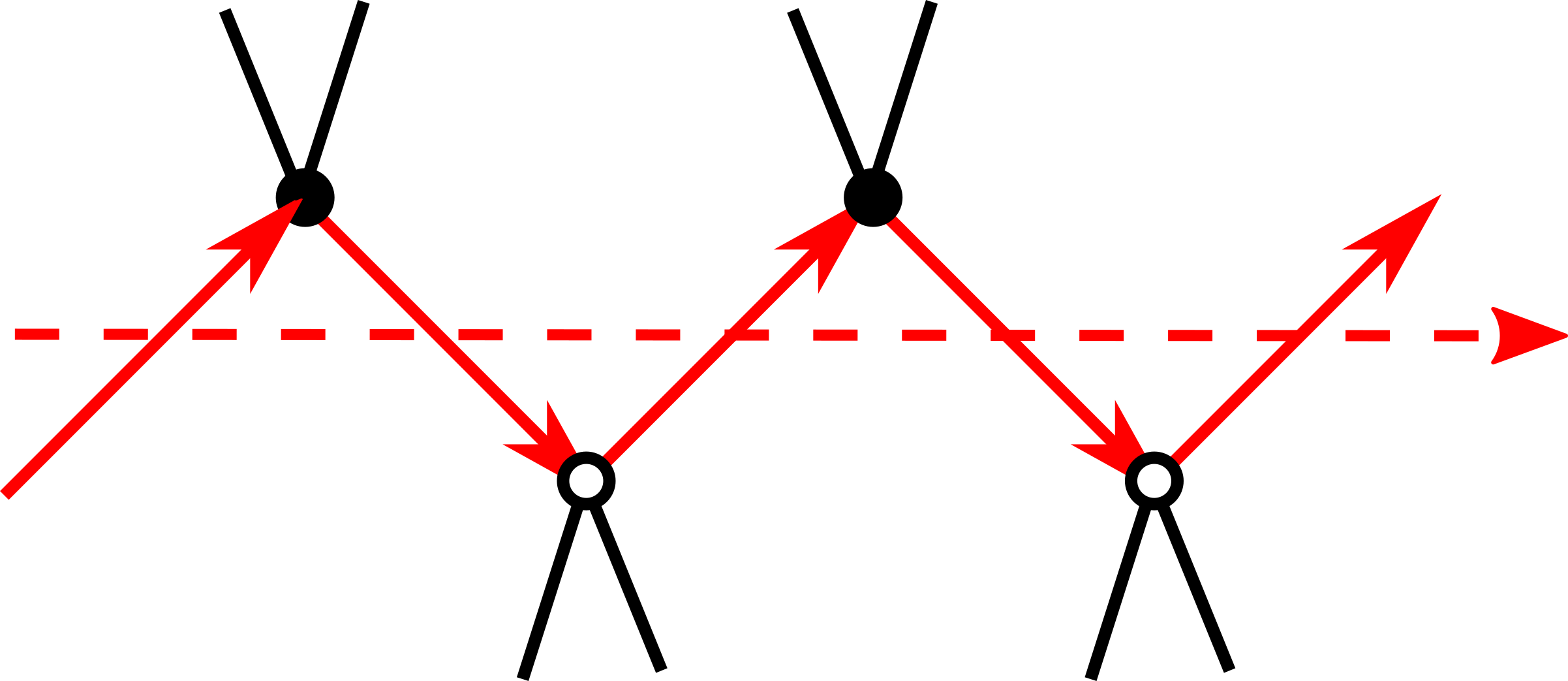}
\caption{A zig-zag path (solid red) and its representation as a path in the medial graph (dashed red).}
\label{zzpath}
\end{figure}

\begin{figure}
\centering
{\includegraphics[width=0.5\textwidth]{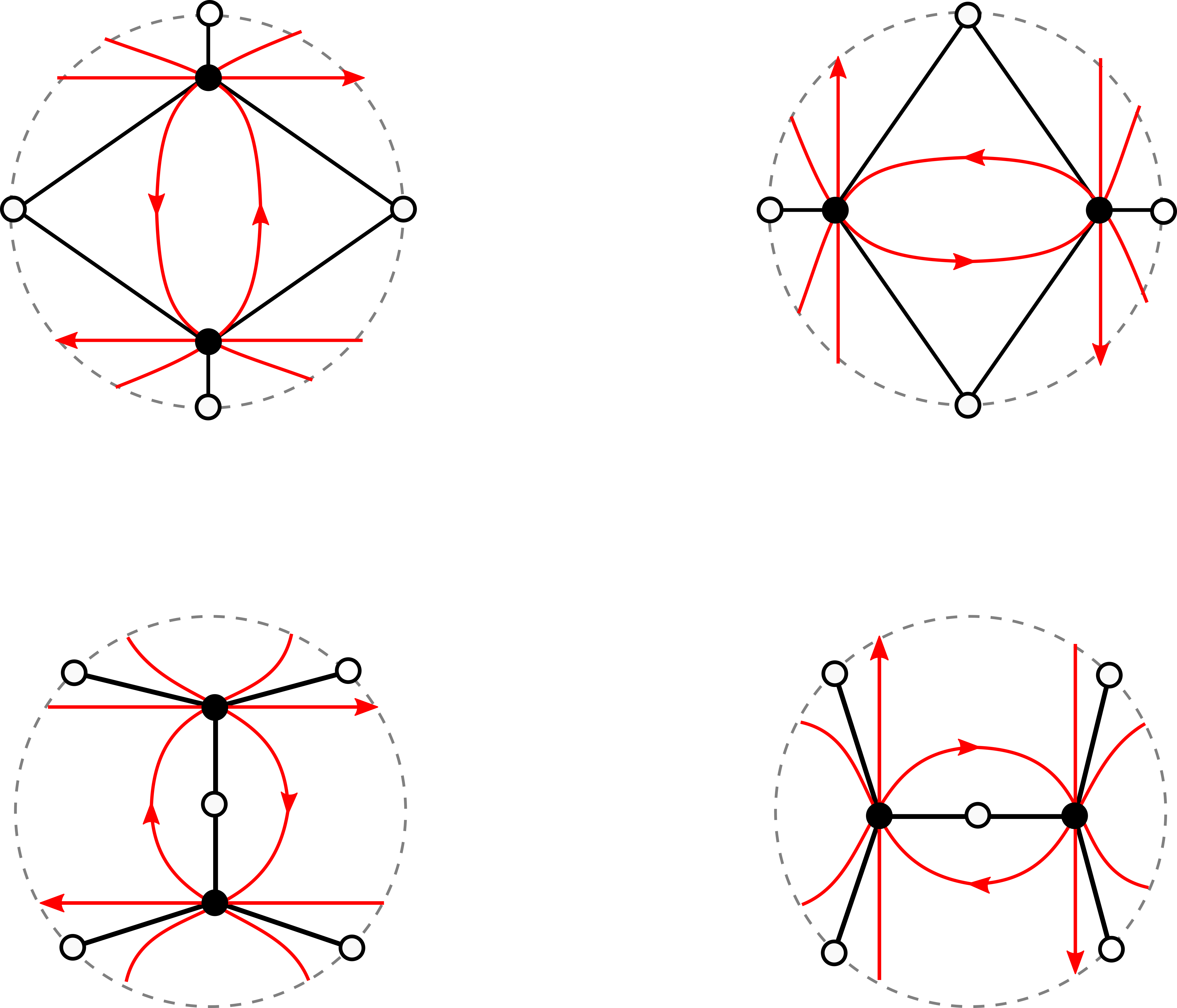}}
\caption{Equivalence of elementary transformations and $2-2$ moves.}\label{eq}
\end{figure}

A \textit{zig-zag path} in $\Gamma$ is a path that turns maximally left at white vertices and maximally right at black vertices (see Figure \ref{zzpath}). Recall that the homology group $H_1(\T,\Z)$ of the torus $\T$ is isomorphic to $\Z^2$. Associated to any bipartite graph on a torus $\T$ is a convex integral polygon $N$ in the plane $H_1(\T,\Z) \otimes_\Z \R \cong \R^2$ called its \textit{Newton polygon}, whose \textit{primitive edge vectors} are given by the homology classes of all zig-zag paths in $\Gamma$. By a primitive edge vector of $N$, we mean a vector contained in an edge of $N$ and oriented in such a way that it is contained in the counterclockwise oriented boundary of $N$, such that its starting and ending points are lattice points (i.e. points in $\Z^2$), and such that there are no other lattice points in its interior. We denote by $E_N$ the set of edges of $N$ (not primitive, so each edge is the union of the primitive edge vectors contained in it). The (2-2) cluster modular group will be completely determined by $N$.  We also point out that elementary transformations have an appealing description in terms of homotopy of zig-zag paths (see Figure \ref{eq} and section \ref{co}). 

\begin{figure}
  \centering
  \includegraphics[width=.2\linewidth]{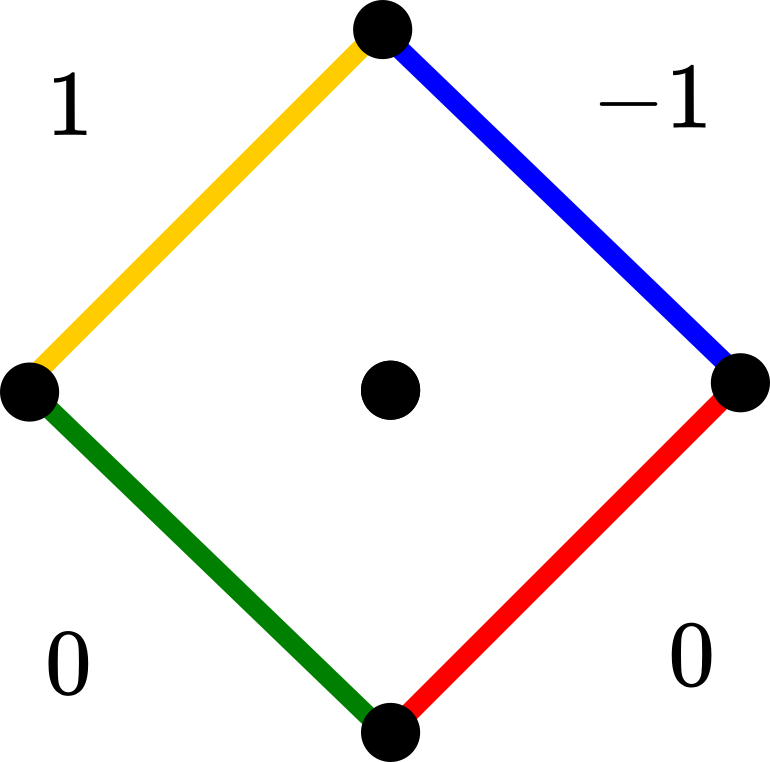}
\caption{The Newton polygon along with the function $f$ for the cluster modular transformation in Figure \ref{octrec2}. The yellow edge corresponds to the yellow zig-zag path in Figure \ref{octrec2}, which is translated one step to the left during the cluster transformation.}
\label{npds}
\end{figure} 
Fock and Marshakov \cite{FM16}*{Section 7.3} constructed a homomorphism from the group of (2-2) cluster transformations to a certain abelian group that we now describe. Let $\widetilde \Gamma$ be the planar biperiodic graph whose quotient under the translation action of $H_1(\T,\Z)$ is $\Gamma$, that is, the preimage of $\Gamma$ in the universal cover of $\T$. We can lift a cluster transformation to an $H_1(\T,\Z)$-periodic sequence of elementary transformations from $\widetilde \Gamma$ to itself. If we superpose $\widetilde \Gamma$ over itself after the cluster transformation, the lift of each zig-zag path is superposed over a lift of a zig-zag path with the same homology class.  To each cluster transformation, we can associate an integer function $f$ on the edges $E_N$ of the Newton polygon $N$ as follows: for any edge $E \in E_N$, the inverse image in the universal cover of the torus of all zig-zag paths corresponding to $E$ (that is all zig-zag paths whose homology classes are in the direction of $E$ when $E$ is oriented counterclockwise along the boundary of $N$) is an infinite collection of ``parallel" zig-zag paths in $\widetilde \Gamma$; let us label them by $(\alpha^i)_{i \in \Z}$, ordered along the direction normal to $E$ and pointing out of $N$. Consider the zig-zag path $\alpha^0$.  After the cluster transformation, if we superpose $\widetilde \Gamma$ over itself, $\alpha^0$ is superposed over a parallel zig-zag path $\alpha^j.$ We define $f(E)$ to be $-j$, which is the number of steps (measured in terms of parallel zig-zag paths) that this zig-zag path (and therefore any zig-zag path $\alpha^i$ parallel to $\alpha^0$) is translated by the cluster transformation. For example, Figure \ref{octrec2} shows the relative positions of a zig-zag path corresponding to the yellow edge $E$ of $N$ in Figure \ref{npds} before and after the cluster transformation corresponding to domino-shuffling from Figure \ref{octrec2}. Since the zig-zag path has been translated one step to the left, we have $f(E)=1$. The evaluations of the function $f$ on the other edges of $N$ are similarly computed (see Figure \ref{npds}). 

The function so defined satisfies (see Section \ref{sec:fockmarsh} for details) 
\be \la{propertystar}
\sum_{E \in E_N}f(E)=0. 
\ee
Let us denote by $\Z^{E_N}_0$ the group of integer functions on $E_N$ satisfying (\ref{propertystar}). Since we passed to the universal cover of $\T$, there is an ambiguity in superposing $\widetilde \Gamma$ over itself because we can translate by $H_1(\T,\Z)$. Therefore to make $f$ a well-defined function of the cluster transformation, we should consider it as an element of the quotient $$\Z^{E_N}_0/H_1(\mathbb T,\Z),$$ where the embedding of $H_1(\T,\Z)$ used in the quotient is given by number of steps that zig-zag paths in $\widetilde{\Gamma}$ are translated by when $\widetilde{\Gamma}$ is translated by elements of $H_1(\mathbb T,\Z)$:
\begin{align*}
    H_1(\mathbb T,\Z) &\hookrightarrow \Z^{E_N}_0\\
    \gamma &\mapsto \left(E \mapsto \langle E,\gamma \rangle_\T \right),
\end{align*}
where $\langle \cdot, \cdot \rangle_\T$ is the intersection form in $H_1(\T,\Z)$. The assignment of the function $f$ to a cluster transformation is a group homomorphism 
\[
\psi:\{\text{Cluster transformations}\} \ra \Z^{E_N}_0/H_1(\T,\Z).
\]
\begin{remark}
Our terminology differs from that of Fock and Marshakov \cite{FM16}, so we provide a translation. The (2-2) cluster modular group is their \textit{group of discrete automorphisms} $\mathcal G_\Delta$, where they use $\Delta$  to denote the Newton polygon, and (2-2) cluster modular transformations are called \textit{discrete flows}.
\end{remark}
Our main result is the following conjecture of Fock and Marshakov \cite{FM16} with a minor modification when $N$ contains no lattice points in its interior.

\begin{theorem}[cf. Theorem \ref{maincor}]\label{mainthm2}
If the Newton polygon $N$ contains at least one interior lattice point, the homomorphism $\psi$ gives an isomorphism of the (2-2) cluster modular group with
\[
\Z^{E_N}_0/H_1(\mathbb T,\Z).
\]
 If $N$ contains no interior lattice points, the (2-2) cluster modular group is a smaller finite group.
\end{theorem}
In particular, the rank of the (2-2) cluster modular group depends only on the number of edges of $N$.
\begin{corollary}\label{rank1}
When $N$ contains an interior lattice point, the rank of the (2-2) cluster modular group is $|E_N|-3$, where $|E_N|$ is the number of edges of the polygon $N$. When $N$ has no interior lattice points, the rank is zero.
\end{corollary}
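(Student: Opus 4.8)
The corollary is a formal consequence of Theorem \ref{mainthm2}: once $G_N$ has been identified with the explicit abelian group displayed there, computing its rank is elementary linear algebra over $\Z$. I would split into the same two cases as Theorem \ref{mainthm2}. If $N$ has no interior lattice point, Theorem \ref{mainthm2} already gives that $G_N$ is finite, hence of rank $0$, and there is nothing left to prove.

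So assume $N$ has an interior lattice point, so $G_N \cong \Z^{\text{Edges of }N}_0/H_1(\mathbb{T},\Z)$. The plan is: (i) show $\operatorname{rank}\Z^{\text{Edges of }N}_0 = |E_N|-1$, using that $\Z^{\text{Edges of }N}_0$ is the sublattice of the rank-$|E_N|$ free abelian group $\Z^{\text{Edges of }N}$ cut out by the single ``degree zero'' relation (from the definition following Theorem \ref{mainthm2}); (ii) show the natural map $H_1(\mathbb{T},\Z)\to\Z^{\text{Edges of }N}_0$ is injective, so that (as $H_1(\mathbb{T},\Z)\cong\Z^2$) its image has rank $2$; and (iii) conclude, using that rank is additive along the short exact sequence $0\to H_1(\mathbb{T},\Z)\to\Z^{\text{Edges of }N}_0\to G_N\to 0$ and is unaffected by the possible torsion in the quotient, that $\operatorname{rank}G_N = (|E_N|-1)-2 = |E_N|-3$.

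Step (ii) is the only part requiring genuine verification, and it is where I expect the (minor) obstacle to lie. One must recall the explicit description of the map $H_1(\mathbb{T},\Z)\to\Z^{\text{Edges of }N}_0$: it sends a class $\gamma$ to the tuple of intersection pairings of $\gamma$ with the zig-zag homology classes attached to the edges of $N$, and one checks both that this tuple satisfies the ``degree zero'' relation --- because the zig-zag classes along $\partial N$, counted with the appropriate multiplicities, sum to $0$ (the polygon $N$ closes up) --- and that the assignment is injective --- because those classes span $\R^2$, so the only $\gamma$ pairing to zero against all of them is $\gamma=0$. Granting these two facts, the rank count above goes through verbatim.
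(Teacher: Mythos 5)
Your proof is correct and is essentially the computation the paper leaves implicit (the paper states Corollary \ref{rank1} without a proof, and in Section \ref{torsection} it uses the companion fact $\operatorname{rank}\bigl(\Z^{E_N}/jH_1(\T,\Z)\bigr)=|E_N|-2$ in passing, which is the same linear algebra). Your three steps are exactly the right ones: $\Z^{E_N}_0$ is the kernel of the surjection $\Z^{E_N}\to\Z$ given by summing coordinates, hence has rank $|E_N|-1$; the map $j$ is injective because the primitive edge directions (equivalently the inward normals $u_\rho$) span $H_1(\T,\R)\cong\R^2$, so the only class pairing trivially against all of them is zero, and it lands in $\Z^{E_N}_0$ because $\sum_\rho E_\rho=0$; and rank is additive in the short exact sequence $0\to H_1(\T,\Z)\to\Z^{E_N}_0\to G_N\to 0$, giving $|E_N|-3$. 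The $g=0$ case is immediate since Theorem \ref{maincor} exhibits $G_N$ as the quotient of $\Z^{E_N}_0$ by a finite-index sublattice, hence finite.
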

Informally, while the collection of all zig-zag paths undergoes a complex sequence of moves, if we restrict attention to the set of zig-zag paths in a specific homology direction, no two zig-zag paths in this set can cross during a cluster transformation. Therefore this set of zig-zag paths as a whole undergoes a translation. The function $f$ defined above records these translations, and remarkably,  we can essentially reconstruct the entire cluster transformation from $f$. 

The proof of Theorem \ref{mainthm2} has two parts. In Section \ref{sectionsurj}, we show that every element of $\Z^{E_N}_0/H_1(\mathbb T,\Z)$ arises from a cluster transformation. This part of the proof is purely combinatorial.

Translations by elements of $H_1(\mathbb T,\Z)$ clearly give rise to trivial cluster transformations. The second part of the proof of the Theorem shows that these are the only trivial cluster transformations. It is difficult to directly check if the induced birational map of weights is the identity. However, integrability of the space of weights $\mathcal L_\Gamma$ means that there is a local reparameterization such that the birational map of weights induced by cluster transformations are linearized. 

Associated to a polygon $N$ is a certain compactification $X_N$ of $(\C^*)^2$ called a \textit{toric surface} (see for example \cite{CLS11}).
Kenyon and Okounkov \cite{KO} defined the \textit{spectral transform} of $wt \in\mathcal L_\Gamma$ to be a triple $(C, S, \nu)$, where $C \subset X_N$ is a curve called the \textit{spectral curve} and $S$ is a divisor of degree $g$ equal to the genus of $C$, that is a formal linear combination of $g$ points in $C$, and $\nu$ is a bijection between zig-zag paths and the points at infinity of $C$ (i.e. the points in $C \cap (X_N \setminus (\C^*)^2)$). The curve $C$ is the vanishing locus of a Laurent polynomial $P(z,w)$ which is a homology-class-weighted version of the partition function for dimer covers. Fock \cites{F15} proved that the spectral transform is birational, allowing us to view $(C,S,\nu)$ as a local reparameterization of $\mathcal L_\Gamma$. For a fixed curve $C$, the Jacobi inversion theorem states that the space of degree $g$ effective divisors in $C$ is birational to a $g$-dimensional complex torus called the Jacobian variety of $C$. In this parameterization, every  cluster transformation leaves $C$ invariant and is a translation of the divisor $S$ in the Jacobian variety of $C$. This translation depends only on the function $f$ associated to the cluster transformation and was described explicitly by Fock \cite{F15} (see Figure \ref{fockfig} for an illustration and Proposition \ref{fockthm} for a precise statement). 

\begin{figure}
  \centering
  \includegraphics[width=.6\linewidth]{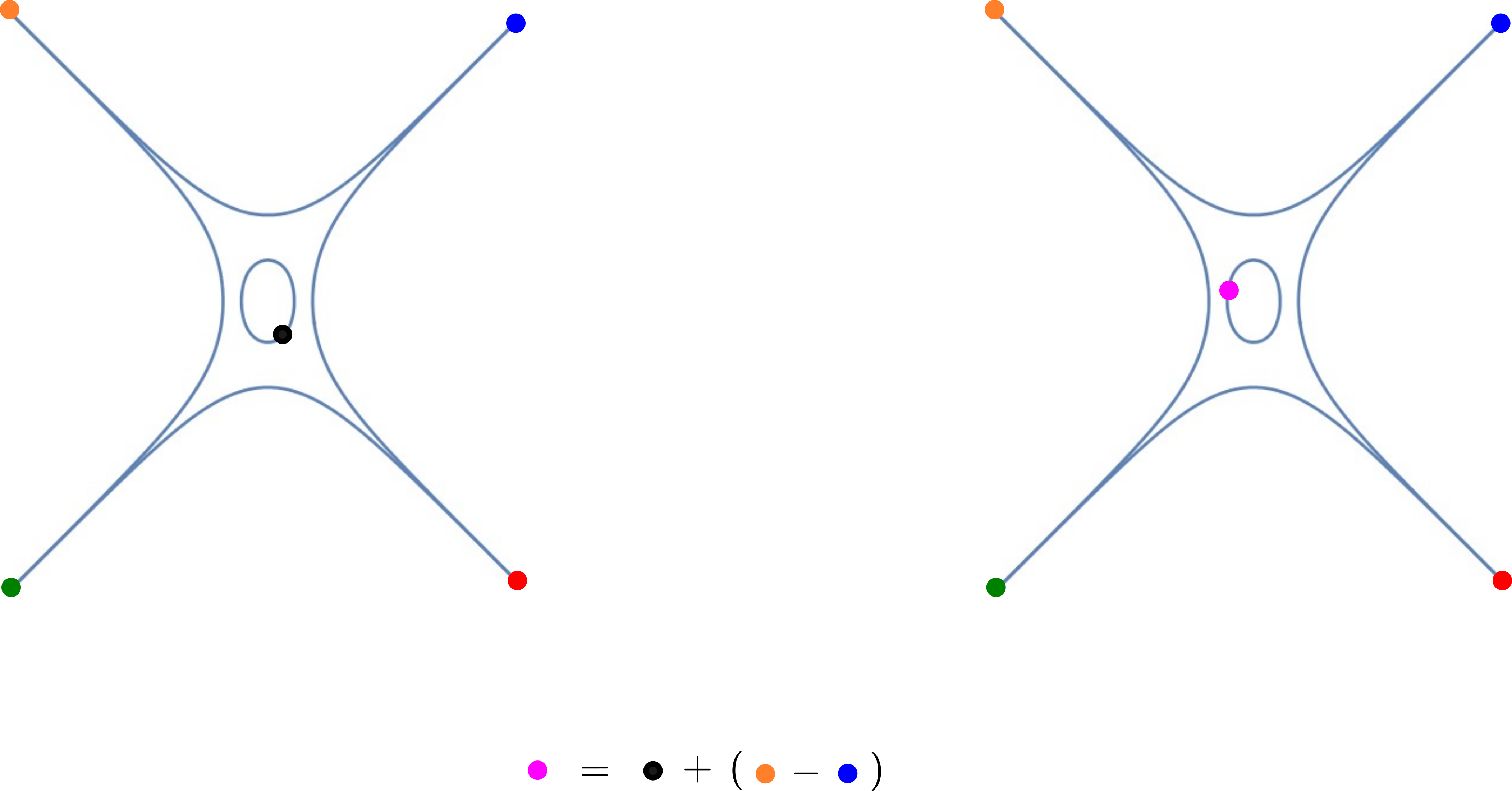}
\caption{The black point on the left is the divisor $S$ on the amoeba of the spectral curve. The points at infinity of the curve are in bijection with zig-zag paths and coloured according to Figure \ref{npds}. The cluster transformation in Figure \ref{octrec2} maps the black point to the pink point. Fock \cite{F15} shows that this map is the translation shown below the figure in the Jacobian variety of the spectral curve. This translation is determined by the function $f$ shown in Figure \ref{npds}.}
\label{fockfig}
\end{figure}
Therefore the question of which cluster transformations are non-trivial can be answered by looking at which translations on the Jacobian variety of $C$ are non-trivial. Under the standard equivalence between divisors and line bundles in algebraic geometry, a translation in the Jacobian corresponds to tensoring by a certain line bundle, so we need to understand when certain line bundles on $C$ are non-trivial. The following Theorem provides the answer. 
\begin{theorem}[cf. Theorem \ref{theorem:appendix1}]
Suppose $N$ contains an interior lattice point. If $L$ is a non-trivial line bundle on the toric surface $X_N$ associated to $N$, then for a generic spectral curve $C$, we have $L|_C \ncong \mathcal O_C$. 
\end{theorem}

We can now explain why the absence of an interior lattice point in $N$ makes the cluster modular group finite. The genus of a generic curve $C$ defined as the vanishing locus of a Laurent polynomial $P(z,w)$ is equal to the number of interior lattice points of the Newton polygon $N$ of $P(z,w)$ (see for example \cite{CLS11}*{Proposition 10.5.8}). Therefore if $N$ has no interior lattice points, then a generic spectral curve $C$ has genus $0$, and therefore is isomorphic to $\P^1$. The Jacobian variety of $\P^1$ is $0$, so every cluster transformation is determined by its action on the finite data $\nu$. See Example \ref{eg:bf} for an example of $N$ with no interior lattice points.

The (2-2) cluster modular group has been studied in the mathematical physics literature by Eager and Franco \cite{EF}, where it is called the \textit{space of Seiberg duality cascades}. They provide a description that is equivalent to that of Fock and Marshakov. We comment on this further in Section \ref{sec:ef}. 

In the last paragraph of \cite{FM16}*{Section 7.3}, Fock and Marshakov provide an alternate description of $\Z^{E_N}_0/H_1(\mathbb T,\Z)$ as the group of divisor classes on the toric surface $X_N$ that restrict to degree $0$ divisors on a generic spectral curve $C$. However this is only true as stated for polygons whose sides are all primitive, that is, no side contains a lattice point other than the end points (see Example \ref{eg:prim}). Recently Treumann, Williams and Zaslow  \cite{TWZ18} gave a different version of linearization of cluster modular transformations under the spectral transform, replacing the toric variety $X_N$ by a toric stack $\mathcal X_N$. 

\begin{proposition}[cf. Proposition \ref{propp2}]\label{mainthm1}
When the Newton polygon $N$ contains an interior lattice point, the (2-2) cluster modular group can be identified with certain subgroup of $\text{Pic}(\mathscr X_N)$.
\end{proposition}

We end the introduction by describing the (2-2) cluster modular groups for some small Newton polygons.

\paragraph{Triangles}
For triangular $N$, \cite{IU15}*{Proposition 11.3} tells us that there is a unique bipartite graph in $\T$ with Newton polygon $N$ and its lift to the plane is the honeycomb lattice. Since this graph does not admit any elementary transformations, the only cluster modular transformations are translations.

\paragraph{Quadrilaterals}

Corollary \ref{rank1} tells us that the cluster modular group has rank one. The dimer models that have quadrilateral Newton polygons coincide with those that arise from Speyer's ``crosses and wrenches" construction \cite{Speyer}. The octahedron recurrence studied there is the (essentially unique) non-torsion cluster modular transformation (on the $\mathcal A$ cluster variety). Other incarnations of cluster modular transformations for quadrilateral $N$ are Hirota's bilinear difference equation \cite{Miwa}, the domino-shuffling algorithm \cites{EKLP,Propp03} (see Example \ref{eg:ds}), the shuffling studied in \cite{BF18} (see Example \ref{eg:bf}) for the suspended pinch-point graph and the pentagram map \cite{FM16}*{Section 8.5}. Another large class of examples with quadrilateral Newton polygons arise from the $Y^{p,q}$ theories in mathematical physics (see for example \cite{Franco2006BraneDA}*{Section 9.3.1}).

The octahedron recurrence can be used to compute arctic curves \cites{PS06, DFS14}. We observed in \cite{G18} that part of the data needed for this technique of computing arctic curves is a cluster modular transformation along with edge-weights that are periodic under the induced birational map. We hope that understanding the cluster modular group will help generalize this method beyond the quadrilateral Newton polygon case. Since higher degree polygons have cluster modular groups with rank greater than one by Corollary \ref{rank1}, we expect a family of arctic curves, one for each cluster modular transformation of infinite order.

\paragraph{Higher degree polygons}

Cluster modular transformations for the  del Pezzo quiver $dP_2$, which has a pentagon Newton polygon, were explicitly studied in \cite{GLVY16}. The $dP_3$ quiver with a hexagonal Newton polygon has been studied in \cites{LMNT14, LM17, LM19}. The cube recurrence studied in \cites{CS04, PS06} arises as the restriction to the resistor network subvariety of a cluster modular transformation on the $dP_3$ graph \cite{GK12}*{Section 6.3}.

\paragraph{Acknowledgments.} 
We are grateful to Dan Abramovich, Melody Chan, Rick Kenyon, Gregg Musiker, Harold Williams and Xufan Zhang. We also thank the anonymous referees for many helpful comments and suggestions.

\section{Background}

\paragraph{Some basic notation.}

Let $\T$ be a topological torus, and let $T:=H_1(\T,\Z)^\vee \otimes_\Z \C^*$ be the algebraic torus with group of characters $H_1(\T,\Z)$. Here $H_1(\T,\Z)^\vee$ denotes the dual group $\text{Hom}_\Z(H_1(\T,\Z),\Z)$. Given an convex integral polygon $N \subset H_1(\T,\R)$, that is, a convex polygon whose vertices are in $H_1(\T,\Z) $, we denote by $V_N$ and $E_N$ the vertices and edges of $N$ respectively.

Let $\Sigma \subset H_1(\T,\Z)^\vee \otimes _\Z \R$ denote the dual fan of $N$. Let $\Sigma(r)$ denote the $r$-dimensional faces of $\Sigma$. Let $u_\rho $ be the primitive integral vector along the ray $\rho \in \Sigma(1)$. Let $E_\rho $ denote the edge of $N$ that is dual to $\rho$. Let $|E_\rho|$ be its \textit{integral length}, defined as the number of primitive integral vectors in $E_\rho$.

\subsection{Combinatorial objects}\la{co}

See \cite{GK12} for further background on the objects described in this section.

\paragraph{Bipartite torus graphs.}

\begin{figure}
\centering
{\includegraphics[width=0.45\textwidth]{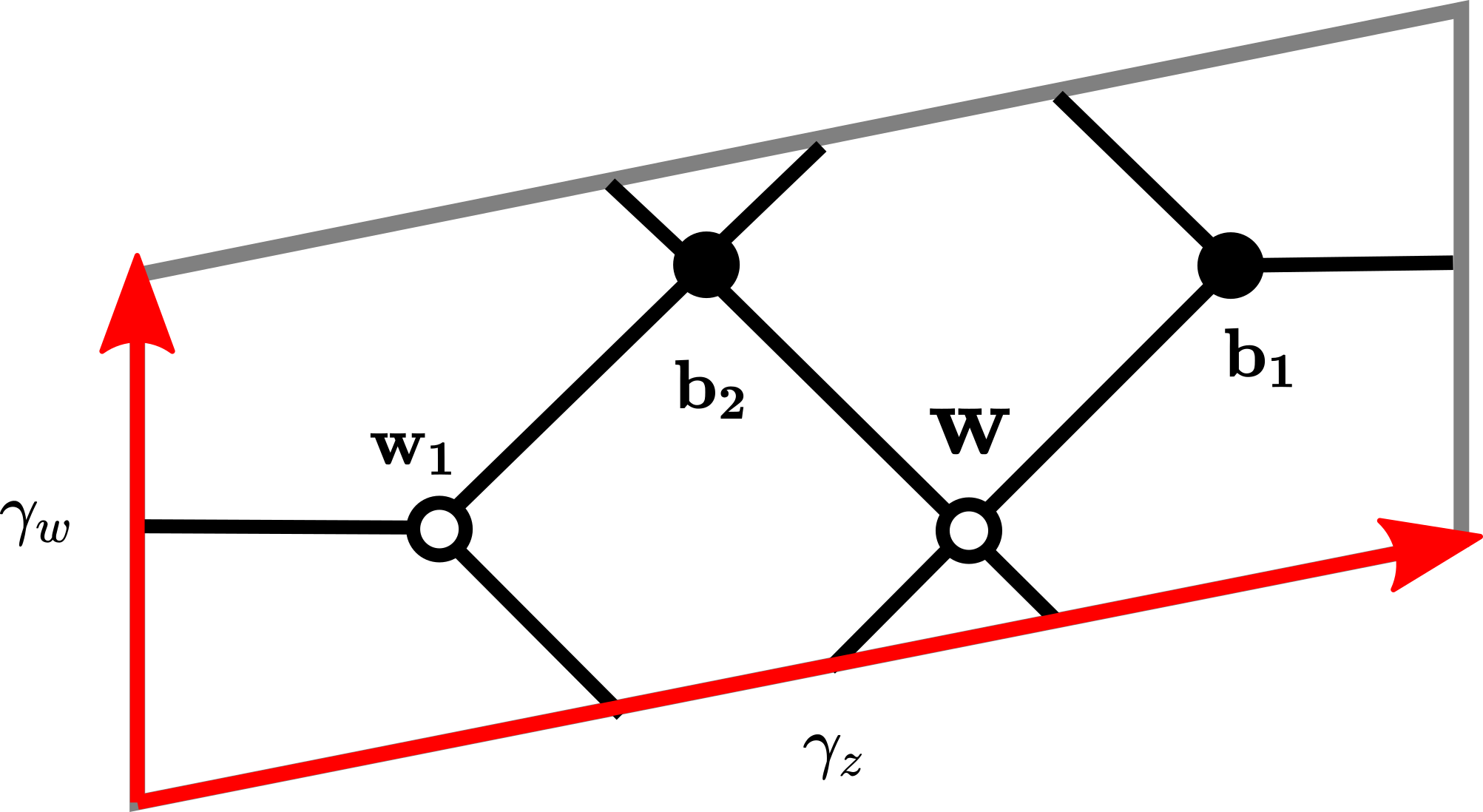}}
\caption{A fundamental parallelogram for a bipartite torus graph. The generators $\gamma_z, \gamma_w$ of $H_1(\T,\Z)$ are as  shown.}\label{bfgraph}
\end{figure}

A \textit{bipartite graph} is a graph whose vertices are colored black or white, such that each edge is incident to a black and a white vertex. A \textit{bipartite torus graph} is a bipartite graph $\Gamma$ embedded in $\mathbb T$ such that the faces of $\Gamma$, that is, the connected components of $\mathbb T - \Gamma$, are contractible. We denote by $B(\Gamma)$ and $W(\Gamma)$ the black and white vertices of $\Gamma$ respectively.

\begin{figure}
\centering
{\includegraphics[width=0.8\textwidth]{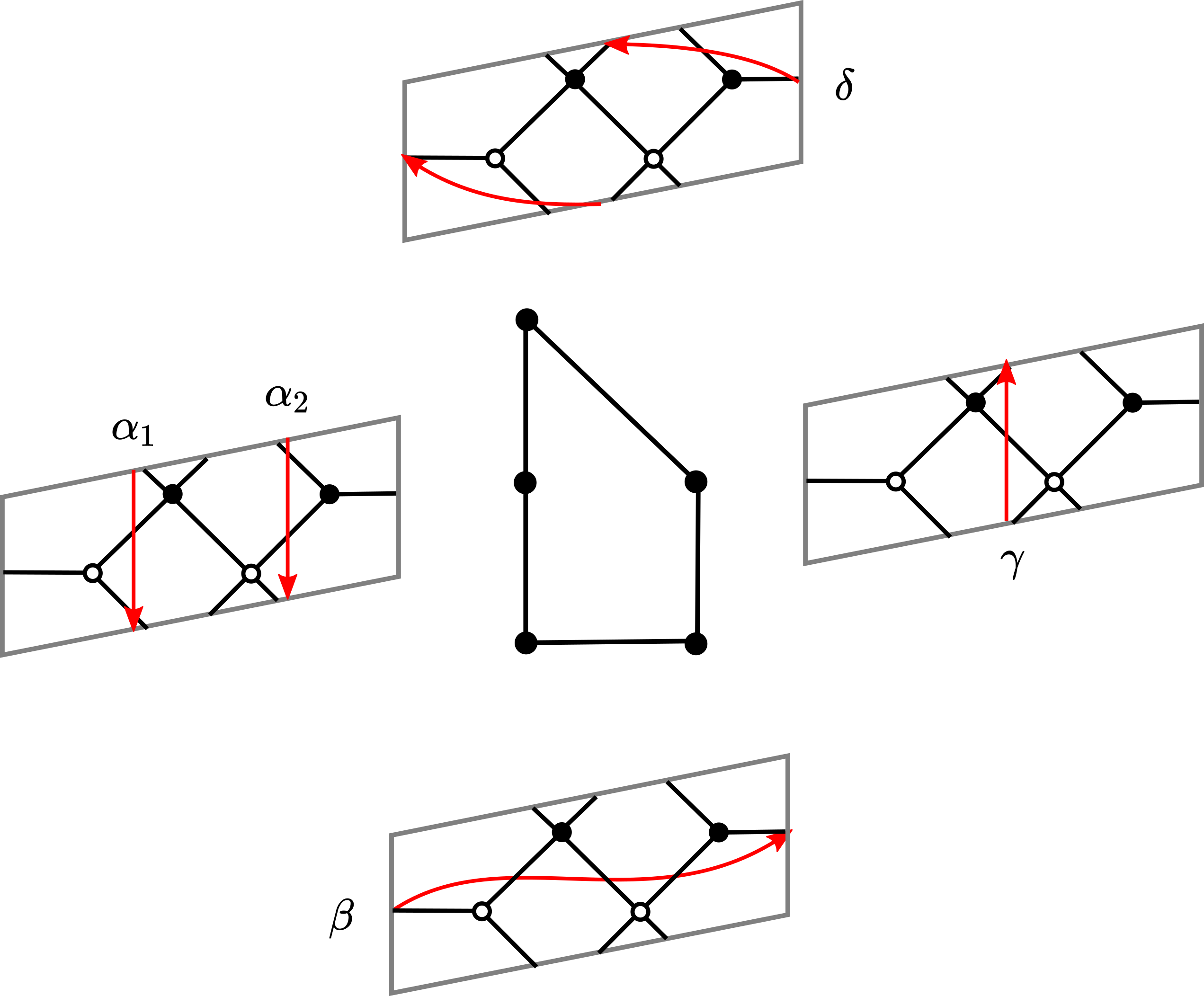}}
\caption{The Newton polygon and zig-zag paths for the graph in Figure \ref{bfgraph}.}\label{bfzzpaths}
\end{figure}

\paragraph{Zig-zag paths and minimality.}
A \textit{zig-zag path} in $\Gamma$ is an oriented path in $\Gamma$ that turns maximally left at white vertices and maximally right at black vertices.  We usually represent a zig-zag path by an oriented path in the medial graph that passes consecutively through the edges of the zig-zag path (see Figure \ref{zzpath}). $\Gamma$ is said to be  \textit{minimal} if, in the preimage $\widetilde{\Gamma}$ of $\Gamma$ in the universal cover $H_1(\T,\R)$ of $\T$, zig-zag paths have no self-intersections and there are no parallel bigons, that is, pairs of zig-zag paths oriented the same way intersecting at two points. The unique convex integral polygon $N(\Gamma) \subset H_1(\mathbb T,\R)$ whose primitive integral edges are given by the homology classes of zig-zag paths in counterclockwise cyclic order is called the \textit{Newton polygon} of $\Gamma$. We usually abbreviate $N(\Gamma)$ to $N$ when the graph is clear from context.

We label the edges of $N$ by rays of the dual fan: the edge corresponding to $\rho \in \Sigma(1)$ is denoted by $E_\rho$. We denote by $Z_\rho$ the set of zig-zag paths whose homology classes are the primitive vectors contained in the edge $E_\rho$.

\begin{example}
Figure \ref{bfgraph} shows a bipartite graph $\Gamma$ in the torus, and Figure \ref{bfzzpaths} shows its zig-zag paths and Newton polygon. It is easily checked that $\Gamma$ is minimal.
\end{example}

\paragraph{Elementary transformations.}
There are two local rearrangements of bipartite torus graphs called \textit{elementary transformations}:
\begin{enumerate}
    \item Spider moves (Figure \ref{eteqns1});
    \item Shrinking/expanding $2$-valent white vertices (Figure \ref{eteqns2}).
\end{enumerate}

We say that two bipartite torus graphs $\Gamma_1$ and $\Gamma_2$ are \textit{topologically equivalent} if there is a sequence of elementary transformations that converts the graph $\Gamma_1$ into $\Gamma_2$. Applying either of the elementary transformations twice gives back the original graph, and therefore this is an equivalence relation on bipartite torus graphs. Elementary transformations are local and do not change homology classes of zig-zag paths. Therefore they leave the Newton polygon invariant and so
\begin{align}\label{npfunction}
\{\text{Minimal bipartite torus graphs}\}/\text{topological equivalence} \xrightarrow[]{\Gamma \mapsto N(\Gamma) } \nonumber \\ \{\text{Convex integral polygons in }H_1(\T,\R)\},
\end{align}
is a well-defined function.

\begin{theorem}[Goncharov and Kenyon, 2012 \cite{GK12}*{Theorem 2.5}]\label{family}
The function in (\ref{npfunction}) which associates to a graph its Newton polygon is a bijection.
\end{theorem}
In other words, for each convex integral polygon in $H_1(\T,\R)$, there is a family of minimal bipartite torus graphs associated to $N$, and any two members of a family are related by elementary transformations.

\paragraph{Triple point diagrams.}

A \textit{triple point diagram} in a disk $\mathbb D$ is a collection of oriented curves called \textit{strands}, defined up to isotopy, such that:

\begin{enumerate}
\item Three strands meet at each intersection point.
\item The end points of each strand are distinct boundary points.
\item The orientations on the strands induce consistent orientations on the complementary regions.
\end{enumerate}
Each strand starts and ends in $\partial \mathbb D$, so if there are $n$ strands, there are $2n$ points in $\partial \mathbb D$, whose orientations alternate ``in" and ``out" around $\partial \mathbb D$. A triple point diagram is \textit{minimal} if strands have no self intersections and parallel bigons.\\

\begin{figure}
\centering
\includegraphics[width=0.6\textwidth]{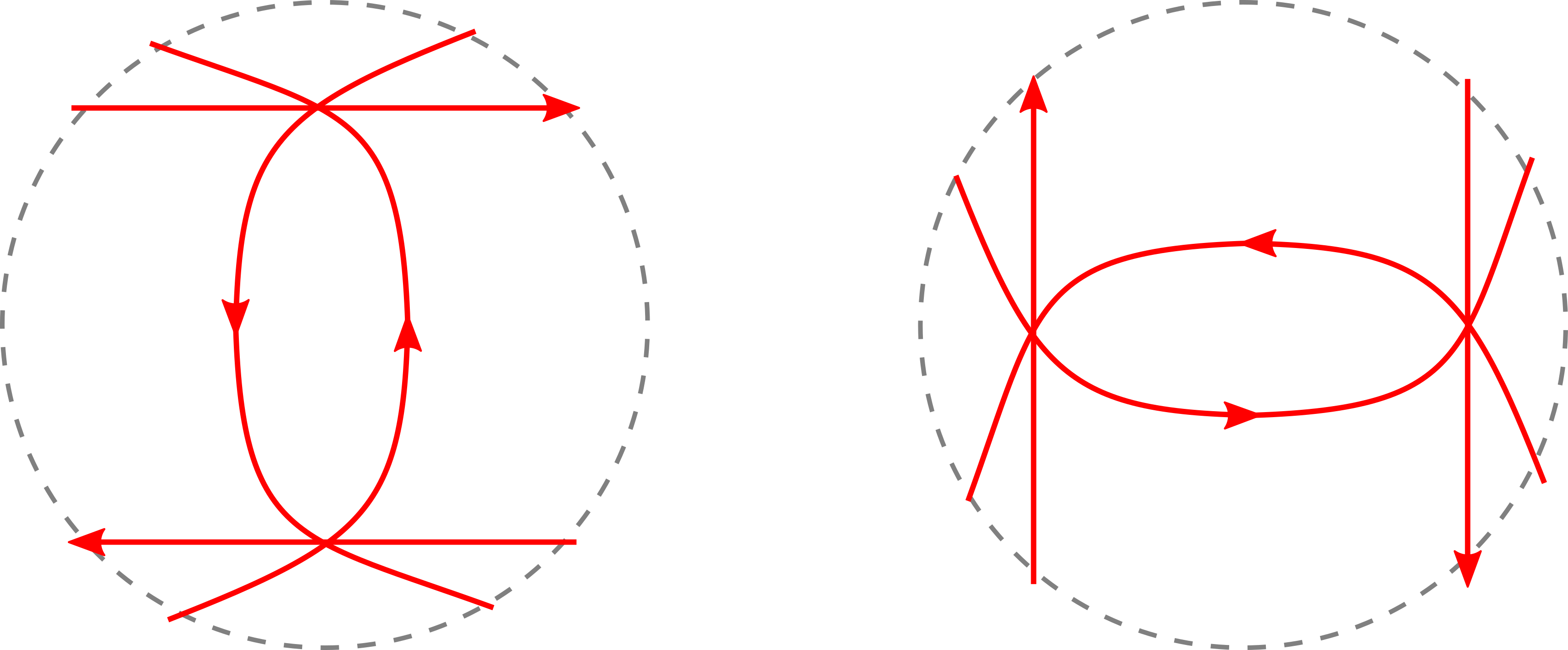}
\caption{The 2-2 move.}\label{2-2}
\end{figure}

There is a local move called a 2-2 move on triple point diagrams (see Figure \ref{2-2}).

\begin{theorem}[Thurston, 2004 \cite{Thur04}, Postnikov, 2006 \cite{Post06}] \label{TT} Suppose we have a disk $\mathbb D$ with $2 n $ points in its boundary alternately labeled ``in" and ``out".
\begin{enumerate}
\item For any of the $n!$ matchings of ``in" and ``out" points, there is a minimal triple point diagram that realizes the matching.
\item Any two minimal triple point diagrams with the same boundary matching of ``in" and ``out" points are related by 2-2 moves.
\end{enumerate}
\end{theorem}

In the course of proving Theorem \ref{TT}, Thurston proves the following result that we will require later.
\begin{proposition}[Thurston, 2004 \cite{Thur04}*{Section 2}]\label{TT2}
Let $\alpha, 
\beta, \gamma$ be three strands that correspond to three consecutive points on the boundary of $\mathbb D$. Then there is a triple crossing diagram (called \textit{standard} in \cite{Thur04}) in which $\alpha, \beta$ and $\gamma$ meet at a triple point just adjacent to the boundary (that is, this is the first triple point of each of these strands as we look along the strand starting at this boundary point).
\end{proposition}

\paragraph{Triple point diagrams in $\mathbb T$.}\label{tpdt2}

A triple point diagram in $\mathbb T$ is a collection of oriented curves called \textit{strands} in $\mathbb T$, determined up to isotopy, such that:
\begin{enumerate}
    \item Three strands meet at each intersection point.
    \item No strand is a homologically trivial loop in $\T$.
    \item The orientations on the strands induce consistent orientations on the complementary regions.
\end{enumerate}
A triple point diagram in $\mathbb T$ is \textit{minimal} if the lift of any strand to the plane has no self-intersections and the lifts of any two strands to the universal cover form no parallel bigons.

\paragraph{Equivalence of triple point diagrams and bipartite torus graphs in $\mathbb T$.}

We recall the equivalence between minimal triple point diagrams in $\mathbb T$ and minimal bipartite torus graphs from \cite{GK12}:
\begin{enumerate}
\item To convert a minimal bipartite torus graph to a triple point diagram, we first expand all black vertices with degree greater than or equal to $4$ by moves inverse to shrinking a degree $2$ white vertex to get a graph in which all black vertices have degree $3$. Then we draw all zig-zag paths so that the black complementary regions are now triangles. Finally we shrink all these black triangle regions into points to get a triple point diagram.

\item To construct a bipartite graph from a triple point diagram, we start by resolving each triple point into a counterclockwise triangle. Put a black vertex in each complementary region that is oriented counterclockwise and a white vertex in each complimentary region that is oriented clockwise. Edges between black and white vertices are given by the vertices of the resolved triple point diagram. The faces of the bipartite graph will be the regions where the orientations alternate.
\end{enumerate}
Under this correspondence, we have:
\begin{align*}
 \text{Minimal bipartite torus graphs} &\longleftrightarrow \text{Minimal triple point diagrams in }\T,\\
    \text{Zig-zag paths} &\longleftrightarrow \text{Strands,}\\
    \text{Elementary transformations} &\longleftrightarrow \text{(2-2) moves}.
\end{align*}

\subsection{The dimer model}\la{dimermodel}

In this section, we introduce the dimer model, mostly following \cite{GK12}.
\paragraph{Weights on bipartite torus graphs.}
We associate to $\Gamma$ the torus of \textit{weights}
$$
\mathcal L_\Gamma:=H^1(\Gamma,\C^*).
$$
A $1$-cocycle representing $wt \in H^1(\Gamma,\C^*)$ is called an \textit{edge-weight}. For $L \in H_1(\Gamma,\Z)$, we denote the pairing of cohomology and homology by $wt(L)$.

For a face $f$ of $\Gamma$, we denote by $\partial f$ the counterclockwise oriented boundary of $f$. We define the \textit{face variables}
$$X_f:=wt(\partial f).$$  
They satisfy the unique relation $\prod_f X_f=1$, arising from the relation $\sum_f  \partial f =0$ in $H_1(\Gamma,\Z)$.

\paragraph{Conjugated surface and the $\epsilon$ form.} Given a bipartite torus graph $\Gamma$, by puncturing each face, we obtain a ribbon graph. Alternately, we can think of the ribbon graph as being obtained from $\Gamma$ by thickening the edges of $\Gamma$. From this ribbon graph, we can construct a new ribbon graph $\widehat \Gamma$ cutting each edge in the middle and gluing it back with a twist. Equivalently, a ribbon structure is the same thing as a cyclic ordering of edges around each vertex of $\Gamma$, and the new ribbon graph $\widehat \Gamma$ is obtained by reversing the cyclic order at each white vertex. From the description in terms of twisting edges, we see that the process of constructing the conjugated surface interchanges boundaries of faces and zig-zag paths. Gluing in the disks along the boundary components of $\widehat \Gamma$ (which are in bijection with zig-zag paths of $\Gamma$), we obtain a surface $\widehat S$ of genus $g$ called the \textit{conjugated surface}, where $g$ is the number of interior lattice points in $N$. 

Since $\widehat \Gamma $ is homotopy equivalent to $\Gamma$, we can define a skew-symmetric bilinear form $\epsilon :H_1(\Gamma,\Z) \times H_1(\Gamma,\Z)$ as follows: If $L_1,L_2 \in H_1(\Gamma,\Z),$ using the homotopy equivalence of $\widehat \Gamma$ with $\Gamma$, we can identify them with homology classes in $H_1(\widehat \Gamma,\Z)$. Using the embedding $\widehat \Gamma \hookrightarrow \widehat S$, they are loops in $\widehat S$. Let $\langle\cdot,\cdot\rangle_{\widehat S}$ denote the intersection form on $\widehat S$. Define $\epsilon(L_1,L_2):=\langle L_1, L_2 \rangle_{\widehat S}$.

\paragraph{Mutations.} Elementary transformations $s:\Gamma_1 \ra \Gamma_2$ bipartite torus graphs induce birational maps of weights $\mu_s:\mathcal L_{\Gamma_1 } \dashrightarrow \mathcal L_{\Gamma_1 }$ described below. In both cases, there is a canonical identification, which we also call $s$, of $H_1(\Gamma_1,\Z)$ with $H_1(\Gamma_2,\Z)$.
\begin{enumerate}
    \item Spider move at face $f$: We define $\mu_s$ by:
    $$
    \mu_s(wt)(L)=wt(s^{-1}(L))\left(1+wt(f)^{-\text{sign }\epsilon(s^{-1}(L),\partial f)}\right)^{-\epsilon(s^{-1}(L),\partial f)}.
    $$
  See Figure \ref{eteqns1} for how the weights of the faces involved transform. 
    \item Shrinking/expanding degree two white vertices: see Figure \ref{eteqns2}.  We define $\mu_s$ by:
    $$
    \mu_s(wt)(L)=wt(s^{-1}(L)).
    $$
\end{enumerate}

\paragraph{The dimer cluster variety $\mathcal X_N$.} Suppose $N$ is a convex integral polygon in $H_1(\T,\R)$. By theorem \ref{family}, there is a family of minimal bipartite torus graphs with Newton polygon $N$ that are related by elementary transformations. Associated with each graph $\Gamma $ in the family is its torus of weights $\mathcal L_\Gamma$. Gluing the $\mathcal L_\Gamma$ using the birational maps induced by the elementary transformations, we obtain a space $\mathcal X_N$ called the {\it dimer cluster variety}.

\paragraph{The (2-2) cluster modular group.}
We say that two bipartite torus graphs $\Gamma_1$ and $\Gamma_2$ are \textit{isotopic} if there is an isotopy in $\T$ relating $\Gamma_1$ and $\Gamma_2$. 
A \textit{(2-2) cluster transformation} $t:\Gamma_0 \ra \Gamma_n$ is a sequence:
$$
\Gamma_0 \xrightarrow[]{s_0} \Gamma_1 \xrightarrow[]{s_1} \cdot \cdot \cdot \xrightarrow[]{s_{n-1}} \Gamma_n,
$$
where each $s_i$ is an elementary transformation or an isotopy in $\T$. A (2-2) cluster transformation $t$ induces a birational map $\mu_t$ of weight tori by composition:
$$
\mu_t:= \mu_{s_{n-1}} \circ \cdot \cdot \cdot \circ \mu_{s_0}:\mathcal L_{\Gamma_0} \ra \mathcal L_{\Gamma_n}. 
$$
A (2-2) cluster transformation $\Gamma \ra \Gamma $ is called \textit{trivial} if the induced birational map of weight tori is the identity. The groupoid $\mathcal G_N$ whose objects are minimal bipartite torus graphs $\Gamma$ with Newton polygon $N$ and morphisms are (2-2) cluster transformations modulo trivial (2-2) cluster transformations is called the \textit{(2-2) cluster modular groupoid} of $\mathcal X_N$. The fundamental group $G_{N}$ of $\mathcal G_{N}$ is called the \textit{(2-2) cluster modular group} and its elements are called \textit{(2-2) cluster modular transformations}. Although we need a base point $\Gamma$ to define the fundamental group $G_N$, a different choice of base point gives an isomorphic group. Elements of $G_N$ are also called \textit{discrete flows} in \cite{FM16}.

\paragraph{Dimer covers.} A \textit{dimer cover} or \textit{perfect matching} of $\Gamma$ is a collection of edges of $\Gamma$ such that each vertex of $\Gamma$ is incident to exactly one edge in the collection. By orienting each edge from its black vertex to its white vertex, we can view each dimer as a $1$-chain in $\Gamma$. Fix a dimer cover $M_0$ which we call the \textit{reference dimer cover}. Then we can associate to each dimer cover $M$ a homology class $[M-M_0] \in H_1(\T,\Z)$ and weight $wt([M-M_0])$. The Newton polygon $N$ has the following description in terms of dimer covers.

\begin{proposition}[\cite{GK12}*{Theorem 3.12}]\label{nplemma}
Suppose $\Gamma$ is a minimal bipartite torus graph with Newton polygon $N$. Up to a translation in $H_1(\T,\R)$, we have:
$$
N=\text{Convex-hull }\{ [M-M_0]: M \text{ is a dimer cover of $\Gamma$}\}.
$$
\end{proposition}

\paragraph{Kasteleyn theory.} Let $R$ be a fundamental rectangle for $\T$. Let $\gamma_z,\gamma_w$ be the oriented sides of $R$ generating $H_1(\T,\Z)$, as shown in Figure \ref{bfgraph}. To each edge $e$ of $\Gamma$, we associate a character
\be \la{edgeph}
\varphi(e)=z^{( e,\gamma_w )} w^{(e,-\gamma_z)},
\ee
where we consider the edge $e$ to be oriented from its black vertex to its white vertex and $(\cdot,\cdot)$ is the local intersection number.\\

$\kappa\in H^1(\Gamma,\C^*)$ is called a  \textit{Kasteleyn sign} if:
\begin{enumerate}
    \item $\kappa(L)=\pm 1$ for all $L \in H_1(\Gamma,\Z)$.
    \item $\kappa(\partial f)=(-1)^{l/2+1}$, if $f$ is a face of $\Gamma$ containing $l$ edges in its boundary.
\end{enumerate}
The \textit{Kasteleyn matrix} 

 \begin{align*}  K(z,w)&:\C[z^{\pm 1}, w^{\pm 1}]^{B(\Gamma )} \ra  \C[z^{\pm 1}, w^{\pm 1}]^{W(\Gamma )}
\end{align*}
is defined as 
$$
 K(z,w)_{\text{w}, \text b}=\sum_{e \in E(\Gamma)\text{ incident to } \text w, \text b} wt(e) \kappa(e)z^{( e,\gamma_w )} w^{( e,-\gamma_z)},
$$
where $\kappa, wt$ are any 1-cocycles representing their cohomology classes. 

\begin{theorem}[Kasteleyn 1963, \cite{Kast63}]\label{Kastthm} We have
\[
\frac{\text{det }K(z,w)}{wt(M_0) z^{( M_0,\gamma_w )} w^{( M_0,-\gamma_z)}}= \sum_{M \text{dimer cover of $\Gamma$}} \text{sign}(M)  wt([M-M_0]) (z,w)^{[M-M_0]},
\]
where $\text{sign}(M) \in \{\pm 1\}$ is a sign that depends on the homology class $[M-M_0]$ and $\kappa$.
\end{theorem}
The Laurent polynomial
\begin{align*}
    P(z,w):=\frac{\text{det }K(z,w)}{wt(M_0) z^{( M_0,\gamma_w )} w^{( M_0,-\gamma_z)}}
\end{align*}
is called the \textit{characteristic polynomial}, and its vanishing locus $C_0:=\{(z,w) \in (\C^*)^2: P(z,w)=0\}$ is called the  \textit{(open) spectral curve}. Note that while the Kasteleyn matrix depends on the choice of 1-cocycles representing the cohomology classes $wt,\kappa$ and the choice of a reference matching $M_0$, the spectral curve is independent on these choices. By Proposition \ref{nplemma}, the Newton polygon of $P(z,w)$ coincides with the Newton polygon of $\Gamma$. 

\subsection{A construction of Fock and Marshakov.}\la{sec:fockmarsh}
In this section, we describe the construction of a homomorphism from the group of cluster transformations to an abelian group due to \cite{FM16}*{Section 7.3}. Let $\Z^{\Sigma(1)}_0$ be the group of integer valued functions $f$ on $\Sigma(1)$ such that $\sum_{\rho \in \Sigma(1)} f(\rho)=0$. Let $\langle \cdot,\cdot \rangle_\T :H_1(\mathbb T,\Z) \times H_1(\mathbb T,\Z) \ra \Z$ be the intersection pairing in $\mathbb T$. We have an embedding
\begin{align*}
    j:H_1(\mathbb T,\Z) &\hookrightarrow \Z^{\Sigma(1)}_0\\
    \gamma &\mapsto \left( \sum_{\alpha \in Z_\rho} \langle [\alpha], \gamma \rangle_\T \right)_{\rho \in \Sigma(1)}.
\end{align*}

Let $\Gamma$ be a bipartite torus graph and let $T$ be its triple point diagram. A cluster transformation $\Gamma \ra \Gamma$ is equivalent to a sequence of triple point diagrams 
\be \label{tcdseq}
T=T_0 \ra T_1 \ra \cdot \cdot \cdot \ra T_{n-1} \ra T_n \cong T,
\ee

where $T_{i+1}$ is obtained from $T_i$ by either performing a 2-2 move or $T_{i+1}$ is related to $T_i$ by an isotopy in $\T$. Let $\{\alpha^i\}$ be the set of strands in $T$. The sequence (\ref{tcdseq}) can be interpolated by a one parameter family of curves $\alpha^i(t)$ in $\T$, where $t\in [0,1]$ such that $\alpha^i(0)=\alpha^i$ and such that the intersections remain triple at all but $n-1$ parameter values where we have a quadruple intersection in the course of a 2-2 move. Using the isomorphism of triple point diagrams $T=T_0 \cong T_n$, we glue the end points of the parameter interval $[0,1]$ to get an $S^1$. During the course of the sequence (\ref{tcdseq}), each strand $\alpha$ in $T$ traces out a $2$-chain $S_\alpha:=\{(u,t):u \in \alpha(t), t \in S^1\}$ in $\mathbb T \times S^1$.\\

Let $Z_\rho=\{\alpha^i_\rho\}_{i=1}^{|E_\rho|}$ be the strands in $T$ corresponding to $\rho \in \Sigma(1)$. The cluster transformation maps each strand $\alpha \in Z_\rho$ bijectively to another strand in $Z_\rho$, and therefore $\partial (\sum_i S_{\alpha_\rho^i})=0.$ Moreover, $\sum_\rho \sum_i S_{\alpha_\rho^i}$ is a $2$-boundary: it is the boundary of the $3$-chain in $\T \times S^1$ traced out by the regions of $\T$ corresponding to white vertices of $\Gamma$. Therefore  we have
\be \la{sumzero}
\sum_\rho \sum_i [S_{\alpha_\rho^i}]=0, \text{ in $H_2(\T \times S^1, \Z)$}.
\ee

Let $(\gamma_z,\gamma_w)$ be the  basis for $H_1(\mathbb T,\Z)$ from Figure \ref{bfgraph} and suppose $\gamma_t$ is a generator of $H_1(S^1,\Z)$. By the K\"unneth formula \cite{Hatcher}*{Theorem 3.16 and Example 3.18}, we have $H_2(\T \times S^1,\Z) \cong \Lambda^2 _\Z[\gamma_x,\gamma_z,\gamma_w]$.
If a strand $\alpha^i_\rho \in Z_\rho$ with $[\alpha_\rho^i]=X_\rho \gamma_z+Y_\rho \gamma_w$ is translated by $a_\rho \gamma_z + b_\rho \gamma_w$ during the sequence (\ref{tcdseq}), then
\begin{align}\la{scoord}
[S_{\alpha_\rho^i}]&= (X_\rho \gamma_z + Y_\rho \gamma_w) \wedge (a_\rho \gamma_z + b_\rho \gamma_w + \gamma_t) \nonumber \\
&=(b_\rho X_\rho- a_\rho Y_\rho)\gamma_z \wedge \gamma_w + X_\rho \gamma_z \wedge \gamma_t + Y_\rho \gamma_w \wedge \gamma_t.
\end{align}

Define a function
\begin{align*}
g:\Sigma(1) &\ra \Z\\
\rho &\mapsto |E_\rho| (b_\rho X_\rho-a_\rho Y_\rho).
\end{align*}

 Informally, each zig-zag path in $\Gamma$ is translated in the universal cover to a parallel zig-zag path by the cluster transformation. $g(\rho)$ is the number of steps in the direction of $\rho$ that any zig-zag path in $Z_\rho$ is translated. Writing (\ref{sumzero}) in coordinates using (\ref{scoord}), we get
 \begin{align*}
     \left(\sum_{\rho \in \Sigma(1)} |E_\rho|(b_\rho X_\rho-a_\rho Y_\rho)\right)\gamma_z \wedge \gamma_w +\left(\sum_{\rho \in \Sigma(1)} X_\rho \gamma_z + Y_\rho \gamma_w \right)\wedge \gamma_t=0.
 \end{align*}
 We have $\left(\sum_{\rho \in \Sigma(1)} X_\rho \gamma_z + Y_\rho \gamma_w \right)=0$ because this is the sum of counterclockwise oriented edges of the Newton polygon. Since $\sum_{\rho \in \Sigma(1)} |E_\rho|(b_\rho X_\rho-a_\rho Y_\rho)=0$, we get  
  $g \in \mathbb Z^{\Sigma(1)}_0$. \\

The above construction gives us a group homomorphism $\psi$ defined as the composition
\be \label{psi}
\{\text{Cluster transformations } \Gamma \ra \Gamma\} \ra \Z^{\Sigma(1)}_0 \ra \mathbb Z^{\Sigma(1)}_0/j H_1(\T,\Z).
\ee
Fock and Marshakov \cite{FM16} conjectured that $\psi$ gives an isomorphism of the (2-2) cluster modular group with $\mathbb Z^{\Sigma(1)}_0/j H_1(\T,\Z).$ We will prove this by showing in Section \ref{sectionsurj} that $\psi$ is surjective, and that the kernel of $\psi$ consists precisely of trivial cluster transformations in Section \ref{sec:triv}.
\subsection{Algebraic geometry background}\label{subsection_ag}

Throughout this paper, the main reference for the algebraic geometry concepts we will use is Hartshorne's book \cite{Hart}.
We will be mainly dealing with normal projective surfaces (see \cite{Bea} or \cite[Chapter V]{Hart} for a reference): up to removing a finite set of points (the \emph{singular locus}), one can think of them as 2-dimensional complex manifolds, embedded in $\mathbb{P}^n$ (that for us will be $\mathbb{P}^n_{\mathbb{C}}$). Similarly, a \emph{curve} will be a purely 1-dimensional projective variety (for example, the locus where $X^2Z = Y^3$ in $\mathbb{P}^2$). A smooth curve is just a compact Riemann surface. We now introduce some notations and a definition that will be useful later.

\begin{notation}
If $X$ is a scheme with a sheaf $\mathcal{F}$ on $X$ and $i\in \mathbb{N}$, we will denote by $h^i(\mathcal{F}):=\operatorname{dim}_{\mathbb{C}}(H^i(X,\mathcal{F}))$.
\end{notation}
\begin{definition}
A surface $X\subseteq \mathbb{P}^n$ is \textit{ruled by lines} if for every point $p\in X$ there is a line of $\mathbb{P}^n$ passing through $p$.
\end{definition}

\subsubsection{Line bundles and divisors on curves}\la{sec:curves}
In this Section, we summarize some results on  algebraic curves that we will need in Section \ref{sec:tct}. For further details, see \cite{ACGH}*{Chapter I}. By a \textit{curve} $C$, we mean a one dimensional projective variety. Generally we will deal with smooth curves i.e. compact Riemann surfaces. The key to studying the geometry of $C$ is to understand rational (i.e. meromorphic) functions on it, which leads to the notions of line bundles and divisors. A \textit{(Weil) divisor} on $C$ is a formal linear combination of points in $C$, that is a sum of the form
\[
D=\sum_{i}n_i p_i, \quad n_i \in \Z, p_i \in C.
\]
The number $n_i$ is called the \textit{multiplicity} of $p_i$ in $D$. The divisors in $C$ form a group under addition, graded by the \textit{degree} homomorphism, defined by
\[
\text{deg}(D)=\sum_i n_i.
\]
If $f$ is a rational function on $C$, it defines its divisor of zeroes and poles
\[
\text{div }f=\sum_{p \in C} \text{ord}_p(f) p,
\]
where $\text{ord}_p(f)$ is the order of vanishing of $f$ at $p$. Such divisors are called \textit{principal divisors} and are always of degree $0$. Two divisors $D$ and $D'$ are said to be \textit{linearly equivalent} if their difference is a principal divisor. The group $\text{Cl}(C)$ of divisors modulo linear equivalence is called the \textit{divisor class group} of $C$. Note that since principal divisors have degree $0$, the degree homomorphism descends to $\text{Cl}(C)$. We denote by $\text{Div}^d(C)$ (resp. $\text{Cl}^d(C)$) the set of degree $d$ divisors (resp. divisor classes). 

Associated to the divisor $D$ is the line bundle $\mathcal O_C(D)$ on open $U \subset C$ by
\[
H^0(U,\mathcal O_C(D)):=\{\text{Rational functions on $U$ such that }\restr{\text{div }f + D}{U} \geq 0\}.
\]
Here we are making the standard identification of a line bundle with its locally free sheaf of sections. Define the \textit{Picard group} $\text{Pic}(C)$ as the group of line bundles on $C$ with the group operation given by tensor product. The map $D \mapsto \mathcal O_C(D)$ is a group isomorphism of $\text{Cl}(C)$ with $\text{Pic}(C)$. 

Let $K_C$ denote the \textit{canonical divisor} class of $C$ i.e. the divisor class such that $\mathcal O_C(K_C)$ is the cotangent line bundle of $C$. It is a basic fact that $h^0(C,K_C) = g$, where $g = \frac 1 2 \text{rank} H_1(C,\Z)$ is the \textit{genus} of $C$. Let $\omega_1,\dots,\omega_g$ be a basis for the space of $1$-forms $H^0(C,K_C)$. We define the \textit{period map} 
\begin{align*}
   \pi: H_1(C,\Z) & \ra \C^g\\
    \sigma &\mapsto \left( \int_\sigma \omega_i \right)_{i=1}^g.
\end{align*}
The \textit{Jacobian} $J(C)$ of $C$ is the complex torus $\C^g/H_1(C,\Z)$.

Fix a base point $p_0$ and define the \textit{Abel map}
\begin{align*}
    u:C &\ra J(C)\\
    p &\mapsto \left(\int_{p_0}^p \omega_i \right)_{i=1}^g,
\end{align*}
where the integral is over an arbitrary path from $p_0$ to $p$. Since we quotient out $H_1(C,\Z)$ in $J(C)$, the map $u$ is well-defined. The definition of the Abel map extends to divisors by linearity. We have:
\begin{theorem}[Abel's theorem] \la{thm:abel}
 Two divisors $D$ and $D'$ are linearly equivalent if and only if $u(D)=u(D')$.
\end{theorem}
As a consequence of Abel's theorem, we get that the Abel map $u:\text{Div}^d(C) \ra J(C)$ factors through an injective map $\phi:\text{Cl}^d(C) \ra J(C)$ for all $d$. We call a divisor \textit{effective} if it has nonnegative multiplicity at each point of $C$. We denote by $C^{(d)}$ the set of effective divisors of degree $d$. We have:
\begin{theorem}[Jacobi inversion theorem] \la{thm:jacobi}
 The Abel map $u:C^{(g)} \ra J(C)$ is surjective and birational.
\end{theorem}
In other words, given a generic point $q$ in $J(C)$, there is a unique degree $g$ effective divisor $D$ such that $u(D)=q$. As a consequence, we also see that the map $\phi:\text{Cl}^d(C) \ra J(C)$ is a bijection for each $d$.

In what follows, we will often use \emph{ample} and \emph{very ample} line bundles. These line bundles provide an intrinsic way to understand projective embeddings. We briefly introduce them here, and we refer the reader to \cite{Hart} for a more complete reference. A very ample line bundle $L$ on $X$ is a line bundle such that there exists an embedding $i: X\hookrightarrow \mathbb{P}^n$ for a certain $n$, such that $i^*\mathcal{O}_{\mathbb{P}^n}(1) \cong L$. An ample line bundle is a line bundle such that a positive tensor power of it is very ample. Given an embedding $i: X\hookrightarrow \mathbb{P}^n$, a \emph{hyperplane section} of $i$ is the zero locus of a section $i^*H\in H^0(X,i^*\mathcal{O}_{\mathbb{P}^n}(1))$, where $H\in H^0(X,\mathcal{O}_{\mathbb{P}^n}(1))$. Geometrically, the hyperplane section $i^*H$ is the intersection of $X$ with the hyperplane $H$. If $x_0,x_1,\dots,x_n$ are homogeneous coordinates on $\P^n$, then for example we can take $H=x_0$, so the locus of points in $X$ that map to points of the form $[0,a_1,\dots,a_n]  \in \P^n$ is a hyperplane section.

\subsubsection{Toric surfaces}\label{subsection_toric_surfaces}
In this subsection we include some notions that we will use on toric varieties that we will use later. We redirect the reader to the book \cite{CLS11} for a complete treatment.
A \textit{toric surface} $X$ is a normal algebraic surface that contains a torus $(\C^*)^2$ as a dense open subvariety, such that the action of $(\mathbb{C}^{* })^{n}$ by multiplication on itself extends to an action of $(\mathbb{C}^{*})^{2}$ on $X$. For example, $\mathbb{P}^2$ is a toric variety. Indeed the dense torus $(\C^*)^2 \subset \mathbb{P}^2$ is the set of points of the form $[a_0,a_i,a_2]$ such that $a_0,a_1,a_2 \in \C^*$. Another example is $\P^1 \times \P^1$, whose dense torus is the set of points of the form $([a_0,a_1],[b_0,b_1])$ with $a_0,a_1,b_0,b_1 \in \C^*$.

In what follows we will only be interested in normal and projective toric surfaces. We denote by $M$ the group of characters of $(\mathbb{C}^{*})^{2}$, i.e. the group of homomorphisms $(\mathbb{C}^{*})^{2} \to \mathbb{C}^{*}$ (for us $M$ will be $H_1(\T,\Z)$). Then $M$ is isomorphic to $\mathbb{Z}^2$, with the isomorphism sending $(a_1,a_2) \in \mathbb{Z}^2$ to the homomorphism sending $(\lambda_1,\lambda_2)\mapsto \lambda_1^{a_1}\cdot \lambda_2^{a_2}$.

Given a set of characters $\chi_0,\dots,\chi_m$ of $M$, we have a morphism $(\mathbb{C}^*)^{ n} \to \mathbb{P}^m$ sending $x\mapsto [\chi_0(x),\dots,\chi_n(x)]$. In particular, for every convex integral polygon $N\subseteq M\otimes \mathbb{R}$, we can take the set of characters to be the lattice points contained in $N$. This gives a morphism $(\mathbb{C}^{*})^{ n}\to \mathbb{P}^m$ as above, where $m+1$ is the number of lattice points contained in $N$. The closure of the image of $(\mathbb{C}^{*})^{ 2} \to \mathbb{P}^m$ is a toric surface
(the image of the map $(\mathbb{C}^{*})^{ 2}\to \mathbb{P}^m$ is an open subset of its closure)\cite{CLS11}*{Proposition 2.1.2}. Moreover, since every convex integral polygon is very ample, the toric surface defined above is normal \cite{CLS11}*{Corollary 2.2.19}. Therefore it has isolated singularities, as normal varieties are smooth in codimension one. 

\begin{remark}\label{remark_sections_are_lattice_pts}
We can also understand the previous paragraph also as follows. Consider the action of $(\mathbb{C}^{*})^{ 2}$ on $\mathbb{P}^m$ defined as $t*[a_0,\dots,a_m]:=[\chi_0(t)\cdot a_0,\dots,\chi_m(t)\cdot a_m]$. Our toric variety is the closure of the orbit of $[1,\dots,1]$. With this action of
$(\mathbb{C}^{*})^{ 2}$ on $\mathbb{P}^m$, the sections $x_i$ of $H^0(\mathbb{P}^m, \mathcal{O}_{\mathbb{P}^m}(1))$ are $(\mathbb{C}^{*})^{ 2}$-equivariant (they have character
$\chi_i$). In particular, if we pull-back the sections $\{X_i\}_{i=0}^n\subseteq H^0(\mathcal{O}_{\mathbb{P}^m}(1))$ to the torus, they correspond to monomials of the form $p_i = z^{a_i}w^{b_i} \in \mathbb{C}[z^{\pm 1},w^{\pm 1}]$. Then the set of points $\{(a_i,b_i)\}_{i=0}^n$ are the lattice points of $N$. 
\end{remark} 

Therefore, a convex integral polygon $N$ gives rise to a projective toric surface $X_N$, along with an ample divisor $D_N$, such that $H^0(X_N,D_N)$ is the vector space of Laurent polynomials with Newton polygon {contained in} $N$. Therefore the linear system $|D_N|$ is identified with curves defined by vanishing of Laurent polynomials with Newton polygon {contained in} $N$. We will require the following two facts:
\begin{itemize}
    \item A generic curve $C = V(P)$ for $P\in H^0(X_N,D_N)$ has genus $g$ equal to the number of interior lattice points in $N$ (see \cite[Proposition 10.5.8]{CLS11}). 
    \item The complement of the algebraic torus in $X_N$ is a union of $\mathbb P^1$s, called \textit{lines at infinity}, parameterized by the edges of $N$, and intersecting according to the combinatorics of $N$ \cite{CLS11}*{Theorem 3.2.6}.
\end{itemize}

  In what follows, we will denote the line at infinity corresponding to $E_\rho \in E_N$ by $D_\rho$. For $C \in |D_N|$, we have $|C \cap D_\rho|=|E_\rho|$, where the points in $C \cap D_\rho$ are counted with multiplicity.

\begin{remark}
A subpolygon of a polygon induces a rational map of the associated toric surfaces. Indeed given $\{\chi_0,\dots,\chi_m\}$ characters of $(\mathbb{C}^{*})^{ 2} $, and given $1<k\le m$, we can consider the two maps $(\mathbb{C}^{*})^{ 2} \to \mathbb{P}^m$ and $(\mathbb{C}^{*})^{ 2} \to \mathbb{P}^k$ where the first one is induced by $\{\chi_0,\dots,\chi_m\}$ and the second one by $\{\chi_0,\dots,\chi_k\}$. There is a rational map $\mathbb{P}^m\dashrightarrow \mathbb{P}^k$ that sends $[a_0,\dots,a_m]\mapsto [a_0,\dots,a_k]$, that makes the following diagram commutative:
$$\xymatrix{ & \mathbb{P}^m\ar@{..>}[dd]\\(\mathbb{C}^{*})^{ 2} \ar[ru]\ar[rd] & \\ & \mathbb{P}^k }$$
\end{remark}

\section{Surjectivity of $\psi$}\label{sectionsurj}

In this section we show that the group homomorphism $\psi$ of Fock and Marshakov defined in (\ref{psi}) is surjective. Given an element of $f \in \Z^{\Sigma(1)}_0$, we will construct a cluster transformation $t_f$ such that $\psi(t_f)=f$.

\subsection{A construction of Goncharov and Kenyon}
We recall the construction of minimal bipartite torus graphs with Newton polygon $N$ from \cite{GK12}. We require that the graph has two  additional properties that are not explicitly mentioned in \cite{GK12}, but are immediate consequences of the construction. Suppose the torus $\mathbb T$ is constructed by gluing opposite sides of a rectangle $R$. We label the north, west, south and east sides of $R$ by $\partial R_N,\partial R_W,\partial R_S,\partial R_E$ respectively. For each ray $\rho \in \Sigma(1)$, let $X_\rho \gamma_z+Y_\rho \gamma_w$ the primitive edge vector in the direction of $E_\rho$, where $\gamma_z,\gamma_w$ are the generators of $H_1(\T,\Z)$ that are given by the sides of $R$ oriented as in Figure \ref{bfgraph}. For each $\rho \in \Sigma(1)$, draw loops $\{\alpha_\rho^i\}_{i=1}^{|E_\rho|}$ in $\T$, each with homology class $X_\rho \gamma_z+Y_\rho \gamma_w$ so that the total number of intersections of any loop with the boundary of $R$ is minimal. Isotope the loops in $\T$ so that:
\begin{enumerate}
    \item The intersections of the loops with each side of $R$ alternate in orientation, ``in" and ``out".
    \item The west-most point on $\partial R_N$ is an ``out" point.
    \item We do not introduce any new intersection points of loops with $\partial R$ during the isotopy.
\end{enumerate}
Using Theorem \ref{TT}, we can isotope the loops in $R$ to obtain a minimal triple crossing diagram in $R$ with the same boundary matching. Using the procedure outlined in Section \ref{tpdt2}, we convert it to a minimal bipartite torus graph. 
\begin{proposition}[\cite{GK12}]\label{gkpropn}
For a convex integral polygon $N$, there is a minimal bipartite torus graph $\Gamma$ with Newton polygon $N$ satisfying:
\begin{enumerate}
    \item The west-most intersection point of a strand with $\partial R_N$ is an ``out" point.
    \item The number of intersections of each zig-zag path with the boundary of $R$ is the smallest possible for a minimal triple point diagram with Newton polygon $N$.
\end{enumerate}
\end{proposition}

We require the following lemma that is contained in the proof of \cite{GK12}*{Theorem 2.5}. We include the proof of the second statement, because it is short and illustrative of the type of arguments we will make later.

\begin{figure}

  \centering
  \includegraphics[width=0.6\linewidth]{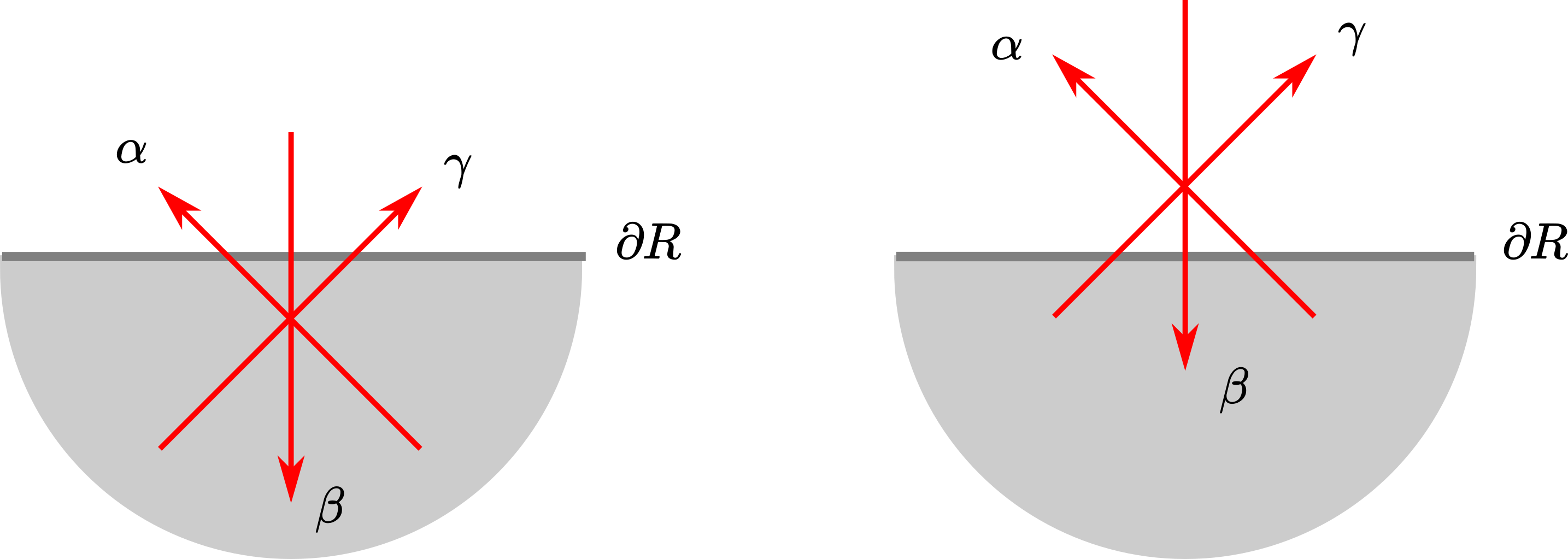}

\caption{Permuting boundary intersections.}
\label{permute}
\end{figure}

\begin{lemma}\label{exch}
Suppose $T$ is a triple point diagram in $\T$. The relative order along the boundary of $R$ of strands associated to the same ray of $\Sigma$ is fixed. The relative order of two incoming or outgoing strands associated to different edges of $N$ can be interchanged by $2$-$2$ moves and isotopy.
\end{lemma}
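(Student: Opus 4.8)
\emph{Proof proposal.}
The two clauses are of opposite nature -- the first is a rigidity statement, the second a flexibility statement -- so I would prove them separately: the first by a short argument in the universal cover, the second by a local argument near $\partial R$ invoking the disk version of Thurston's theorem (Theorem~\ref{TT}).

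For the first clause, pass to the universal cover $\widetilde{\mathbb T}\cong H_1(\mathbb T,\R)\cong\R^2$. All strands of the family $\{\alpha_\rho^i\}$ represent the primitive class $E_\rho/|E_\rho|$, so every lift of every such strand is a line parallel to $E_\rho$. Minimality of the triple point diagram in $\mathbb T$ says precisely that the lifts have no self-intersections and that no two of them bound a parallel bigon; for lines of a common direction this forces distinct lifts to be disjoint. Thus in any minimal diagram the lifts of $\{\alpha_\rho^i\}$ form a family of pairwise disjoint lines all parallel to a fixed direction; such a family is linearly ordered (by which side of any one of them the others lie on) and meets every transversal -- in particular every lift of a side of $\partial R$ -- in exactly this order. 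Since a collection of pairwise disjoint simple closed curves on $\mathbb T$ of a fixed primitive homology class is unique up to ambient isotopy, this order is the same in every minimal diagram; hence the cyclic order along each side of $\partial R$ of the strands of $\{\alpha_\rho^i\}$ is an invariant, unchanged by isotopies and 2-2 moves.

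For the second clause, let $\alpha\in\{\alpha_\rho^i\}$ and $\beta\in\{\alpha_{\rho'}^j\}$ with $\rho\ne\rho'$ be two strands that are both incoming along a side $S_0$ of $R$ and adjacent there, and let $I\subset S_0$ be the sub-arc between their feet. If $E_\rho$ and $E_{\rho'}$ were parallel they would be opposite edges of the convex polygon $N$, and then their strands would cross $S_0$ with opposite orientations and could not both be incoming there; hence the directions of $E_\rho$ and $E_{\rho'}$ are genuinely distinct and the lifts of $\alpha$ and $\beta$ must meet. Let $P$ be the intersection point of $\alpha$ and $\beta$ nearest to $I$, and let $Q$ be the closed region cut out by $I$ together with the initial arcs of $\alpha$ and $\beta$ running from $S_0$ to $P$. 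The plan is: (i) using the minimality hypotheses, show that after finitely many 2-2 moves one may assume that no other strand meets the interior of $Q$ and that the only triple point on $\overline Q$ is $P$ itself -- the point being that a strand entering $Q$ through the arc of $\alpha$ or $\beta$ and leaving again, or a triple point in the interior of $Q$, can be shoved out by a 2-2 move or else produces a parallel bigon, contradicting minimality; and (ii) once $Q$ is clean, the triple point $P$ sits against $S_0$ with only $\alpha$, $\beta$ and one further strand through it, so that an isotopy of $\mathbb T$ sliding $P$ onto $S_0$ and across the glued side has the sole effect of interchanging the feet of $\alpha$ and $\beta$, the resulting diagram still being minimal. (Alternatively, once $Q$ is emptied one may apply the disk statement of Theorem~\ref{TT} to replace the diagram near $S_0$ by the one with the two feet already interchanged, the transition being realized by 2-2 moves.)

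The step I expect to be the main obstacle is (i): controlling the combinatorics inside $Q$. One has to rule out a strand that dips in through the arc of $\alpha$ or $\beta$ and comes back out (an orientation-compatible bigon, forbidden by minimality), analyze the third strand at $P$ and at any interior triple point using that around a triple point the six germs of strands alternate among the three strands through it, and then check that the 2-2 moves used to empty $Q$, together with the final push of $P$ across $S_0$, do not create a parallel bigon with some distant strand and thereby break minimality globally. The bookkeeping of orientations -- making sure that "bigon" always refers to the configuration excluded by the definition of minimality, and that the complementary regions stay consistently oriented at every step -- is what carries the real weight; the homological input (non-proportional classes, hence a genuine crossing $P$) only guarantees that there is something to push in the first place.
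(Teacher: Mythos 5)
The paper cites this lemma from \cite{GK12} and does not supply a proof; there is no in-paper argument to compare against, so I assess your attempt on its own terms.

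Your argument for the first clause is sound, but the final justification should be replaced. The reason the order along $\partial R$ is invariant under 2-2 moves and isotopy is not that the \emph{unlabeled} configuration of disjoint parallel loops is unique up to ambient isotopy; it is that a sequence of 2-2 moves and isotopies is realized by a continuous one-parameter family of minimal diagrams, and at every parameter value the lifts of two same-class strands remain disjoint (any intersection between curves of the same asymptotic direction produces a parallel bigon, since their algebraic intersection number is zero and all crossings have the same sign). The linear order of the labeled lifts in the cover is therefore locally constant, hence constant, in the parameter. Uniqueness up to ambient isotopy alone would not preclude a permutation of labels along the path of diagrams, which is exactly what the lemma needs to rule out.

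For the second clause, the plan --- find the crossing $P$ of $\alpha$ and $\beta$ nearest the sub-arc $I$, empty the triangular region $Q$ by 2-2 moves, and isotope $P$ across $\partial R$ --- is the right one and is the kind of argument used in \cite{GK12} and \cite{Thur04}. But you do not actually execute step~(i), and you correctly identify it as load-bearing: you must show that any bigon an auxiliary strand cuts off against $\alpha$ or $\beta$ inside $Q$ is a \emph{parallel} bigon (the forbidden kind), not an anti-parallel one (which minimality allows), and then verify that the moves emptying $Q$ together with the final push of $P$ across $\partial R$ do not create a parallel bigon with strands outside $Q$ and thereby destroy minimality. As written, the second half of the proposal is a correct plan with the decisive combinatorial step left open rather than a proof.
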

\begin{proof}

Suppose $\alpha$ and $\gamma$ are two consecutive ``out'' strands in $T$ that correspond to different rays of $\Sigma$. Then by the alternating property, there is an ``in'' strand $\beta$ of $T$ between them. Since $\alpha$ and $\gamma$ belong to different rays of $\Sigma$, they must cross at a triple point inside $R$. By Proposition \ref{TT2}, there is a triple point diagram $T'$ in which the three strands $\alpha, \beta, \gamma$ meet at a triple point just adjacent to the boundary. By Theorem \ref{TT}, we can use $2$-$2$ moves and isotopy to convert $T$ into $T'$. Then we isotope this triple point across the boundary $\partial R$, which permutes boundary intersections of $\alpha$ and $\gamma$, as illustrated in Figure \ref{permute}.
\end{proof}

\paragraph{Change of basis for $H_1(\Gamma,\Z)$.}
Let $X_\rho \gamma_z + Y_\rho \gamma_w$ be the homology class of a zig-zag path in $Z_\rho$ in the basis $(\gamma_z,\gamma_w)$ of $H_1(\T,\Z)$ from Figure \ref{bfgraph}. Changing the basis, or equivalently, changing the fundamental rectangle $R$ of $\T$ corresponds to the action of $SL(2,\Z)$ on $H_1(\T,\Z)$. 
$SL(2,\Z)$ is generated by 
$$
{\bf S}:=\begin{pmatrix}0&-1\\1&0 \end{pmatrix}, \qquad   {\bf T}:=\begin{pmatrix}1&-1\\0&1 \end{pmatrix}.$$ 
Let ${\bf g}\cdot R$ denote the fundamental parallelogram with boundary formed by the vectors ${\bf g} \cdot \gamma_z$ and ${\bf g}\cdot \gamma_w$. We describe the action of some elements of $SL_2(\Z)$ explicitly.
\begin{enumerate}
    \item In the basis $({\bf S}\cdot \gamma_z, {\bf S}\cdot \gamma_w)$, the vector $a \gamma_z + b \gamma_y$ has coordinates
    $$
    {\bf S}^{-1}\begin{pmatrix}a\\b \end{pmatrix}=\begin{pmatrix}b\\-a \end{pmatrix}.
    $$
    
    Therefore the new coordinates are obtained from the old coordinates by rotating clockwise by $\frac{\pi}2$.
    
    \item In the basis $({\bf T}\cdot \gamma_z, {\bf T}\cdot \gamma_w)$, the vector $a \gamma_z + b \gamma_y$ has coordinates
    $$
    {\bf T}^{-1}\begin{pmatrix}a\\b \end{pmatrix}=\begin{pmatrix}a+b\\b \end{pmatrix}.
    $$
    Therefore ${\bf T}$ is a shear mapping.
    
    \item Define 
    $${\bf U}:=-{\bf T}{\bf S} {\bf T}=\begin{pmatrix}1&0\\-1&1 \end{pmatrix}.$$
    In the basis $({\bf U}\cdot \gamma_z, {\bf U}\cdot \gamma_w)$, the vector $a \gamma_z + b \gamma_y$ has coordinates
    $$
    {\bf U}^{-1}\begin{pmatrix}a\\b \end{pmatrix}=\begin{pmatrix}a\\a+b \end{pmatrix}.
    $$
    Therefore ${\bf U}$ is also shear mapping.
\end{enumerate}

\subsection{Proof of surjectivity.} \la{sec:surj}
The main result of this section is:
\begin{theorem}\label{surj}
The group homomorphism 
$$
\psi:\{\text{Cluster transformations } \Gamma \ra \Gamma\} \ra \mathbb Z^{\Sigma(1)}_0/j H_1(\T,\Z),
$$
defined in (\ref{psi}) is surjective.
\end{theorem}

The rest of this section is devoted to the proof of Theorem \ref{surj}. Let $\rho, \sigma \in \Sigma(1)$ be two consecutive rays in counterclockwise cyclic order. Since the functions $\delta_{\rho}-\delta_{\sigma}$ generate $\Z^{\Sigma(1)}_0$, it suffices to show that there is a cluster transformation $t$ such that $\psi(t)=\delta_{\rho}-\delta_{\sigma}$.\\

  Let $(X_\rho,Y_\rho)$ and $(X_\sigma,Y_\sigma)$ be the homology classes of strands in $Z_\rho, Z_\sigma$ respectively in the basis $(\gamma_z,\gamma_w)$. Changing the basis by repeatedly using ${\bf T}$ or ${\bf U}$, we may assume that $(X_\rho,Y_\rho)$ is neither horizontal nor vertical. Then, rotating if necessary using ${\bf S}$, we can assume that $X_\rho,Y_\rho>0$. Now making another change of basis by repeatedly using ${\bf T}$ or ${\bf U}$, we may assume that $(X_\sigma,Y_\sigma)$ is not horizontal or vertical either. For example, if $(X_\rho,Y_\rho)=(0,-1)$ and $(X_\sigma,Y_\sigma)=(1,0)$, we can do the following change of basis:
 $$
 (0,-1), (1,0) \xmapsto[]{{\bf T}} (-1,-1),(1,0)  \xmapsto[]{{\bf S}^2}(1,1),(-1,0) \xmapsto[]{{\bf U}}(1,2),(-1,-1).
 $$
 
The strategy of the proof is similar to the proof of lemma \ref{exch}. We create a simple configuration of strands near the boundary of $R$ using isotopy and $2-2$ moves, and then push this configuration past $\partial R$.\\

Using Proposition \ref{gkpropn}, we obtain a minimal triple point diagram $\mathfrak T$ in a fundamental rectangle $R$ of $\T$ such that:
\begin{enumerate}
    \item $(X_\rho,Y_\rho) \in \Z_{>0}^2$.
    \item $X_\sigma,Y_\sigma \neq 0$.
\end{enumerate}

Since in what follows we will have occasion to deal with strands in both $\T$ and $R$, let us call strands in $\T$ zig-zag loops and reserve the term ``strand" for strands in $R$, to avoid confusing the two notions. The strands in $R$ are the components of the intersections of zig-zag loops with the interior of $R$. Let $U_\rho$ denote the set of strands whose zig-zag loops correspond to the edge $E_\rho$ of $N$. By minimality of $\mathfrak T$, two strands in $U_\rho$ do not intersect and therefore the partition of the boundary intersection points by the strands in $U_\rho$ is a ``parallel crossing". Therefore there is a (strict) linear order $<_\rho$ on $U_\rho$, where strands are ordered from smallest to largest in the direction of the ray $\rho$. Let us denote by $\alpha$ the $<_\rho$-largest strand in $U_\rho$. Similarly let $\beta$ be the $<_\sigma$-smallest strand in $U_\sigma$. Since $(X_\rho,Y_\rho) \in \Z_{>0}^2$, the strand $\alpha$ is the north-west-most among all strands corresponding to $\rho$. 

\begin{lemma}\la{westtonorth}
The strand $\alpha$ has its ``in" boundary point on $\partial R_W$ and its ``out" boundary point on $\partial R_N$.
\end{lemma}
\begin{proof}
Since $X_\rho,Y_\rho>0$, there is a strand associated to $\rho$ that intersects $\partial R_N$ and a strand associated to $\rho$ that intersects $\partial R_W$. By assumption, $\alpha$ is the north-west-most strand associated to $\rho$, and therefore both of its end points are in $\partial R_N \cup \partial R_W$. Its end points cannot both be on the same side of the boundary of $R$, because the zig-zag loop containing $\alpha$ has smallest possible number of intersections with $\partial R$ (property 2 in Proposition \ref{gkpropn}). Since $X_\rho,Y_\rho>0$, its ``in" boundary point must be on $\partial R_W$ and its ``out" boundary point must be on $\partial R_N$ (again by property 2 in Proposition \ref{gkpropn}). 
\end{proof}

\begin{lemma}\la{keylem}
Starting from $\mathfrak T$ and using 2-2 moves and isotopy in $\T$, we can obtain a new triple point diagram $\mathfrak S$ in $\T$, such that:
\begin{enumerate}
    \item The strands in $U_\rho$ have been cyclically shifted in the direction of $\rho$ (so that $\alpha$ is now $<_\rho$-smallest).
    \item The strands in $U_\sigma$ have been cyclically shifted in the direction of $-\sigma$ (so that $\beta$ is now $<_\sigma$-largest).
    \item The linear orders  of strands corresponding to all other rays are unchanged. 
\end{enumerate}

\end{lemma}

 \begin{proof}
By using lemma \ref{exch}, we can permute the boundary points to make the intersection points of $\alpha$ with $\partial R$ the north-most ``in" point in $\partial R_W$ and the west-most ``out" point in $\partial R_N$. By property 1 in Proposition \ref{gkpropn}, the west-most intersection point of a strand in $\mathfrak T$ with $\partial R_N$ is an ``out" point. Therefore the end-points of $\alpha$ are the north-most intersection point in $\partial R_W$ and the west-most intersection point in $\partial R_N$ respectively. Now we have to deal with four cases, depending on which quadrant $(X_\sigma,Y_\sigma)$ lies in.

\begin{enumerate}

\begin{figure}
\begin{subfigure}{.5 \textwidth}
  \centering
  \includegraphics[width=.5\linewidth]{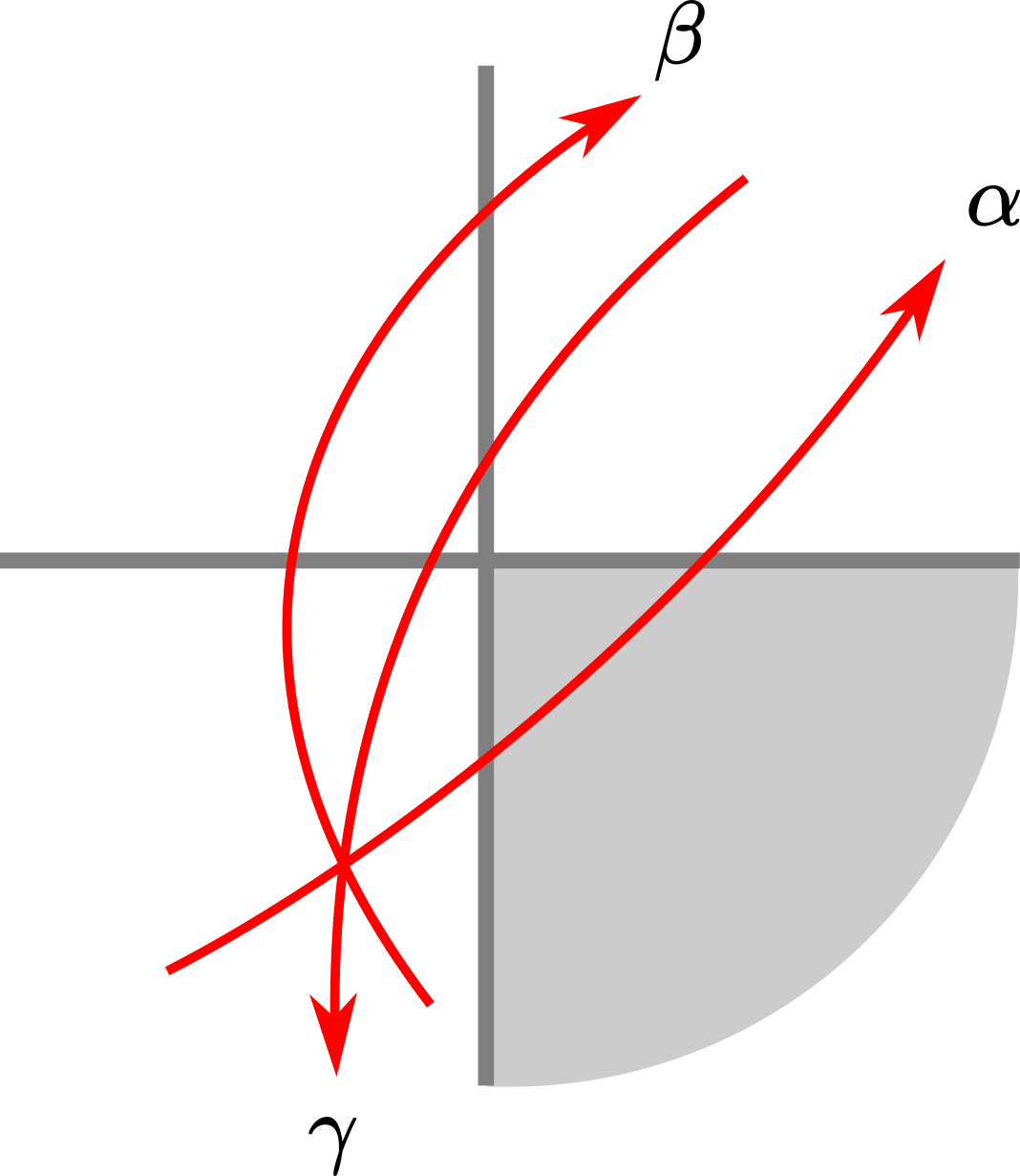}
  \caption{Initial configuration.}
  \label{before1}
\end{subfigure}%
\begin{subfigure}{.5\textwidth}
  \centering
  \includegraphics[width=.5\linewidth]{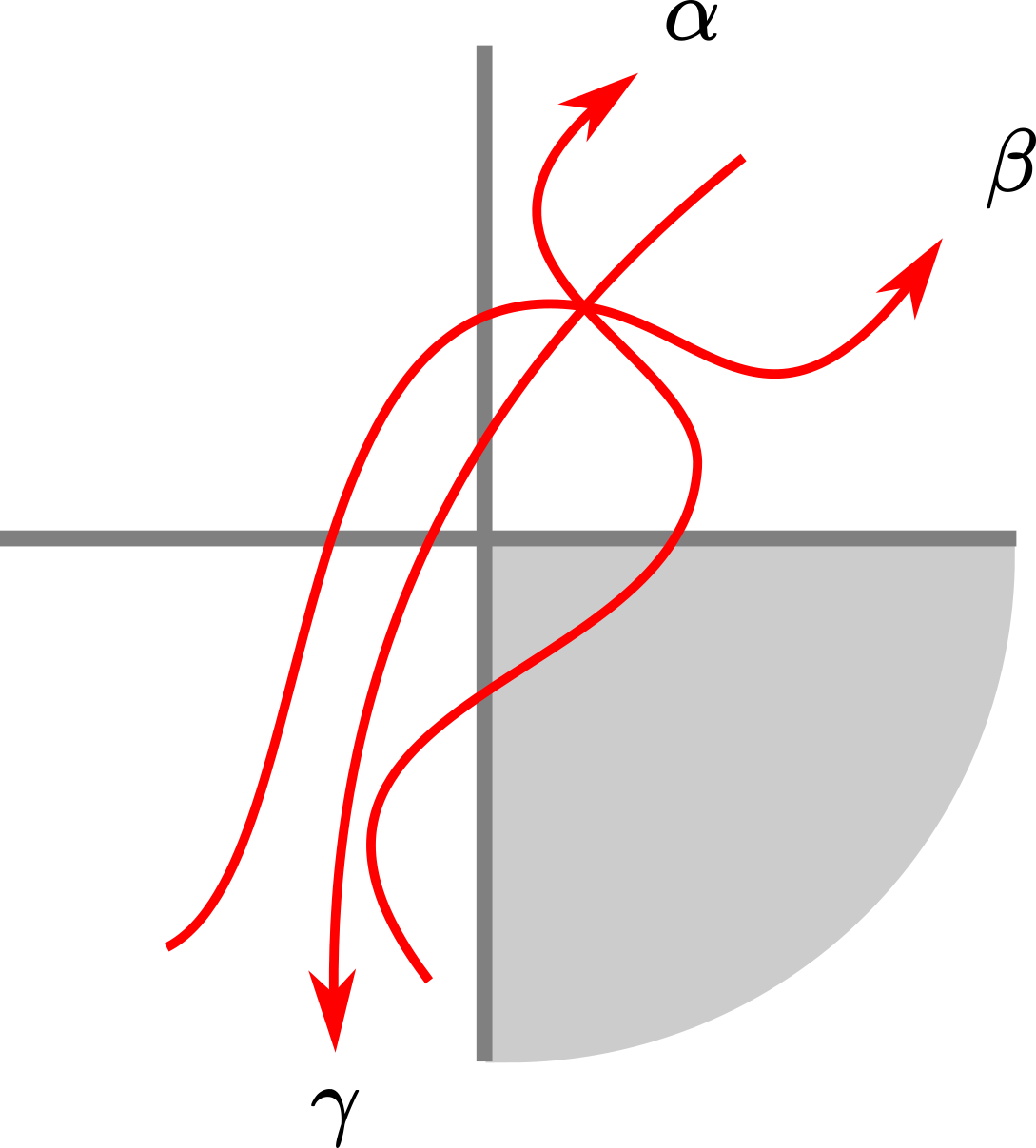}
  \caption{Configuration after isotopy.}
  \label{after1}
\end{subfigure}
\caption{Isotoping the local configuration of strands past the northwest corner of $R$ in case 1.}
\end{figure}
    \item  $X_\sigma,Y_\sigma>0$.
    
  Since $N$ is a closed polygon, there must exist a ray $\tau \in \Sigma(1)$ such that if $(X_\tau,Y_\tau)$ are the coordinates of a zig-zag path in $Z_\tau$, we have $Y_\tau<0$. Making a change of basis using ${\bf T}$, we can further assume $X_\tau<0$ without affecting the assumptions already in place. Since $X_\sigma,Y_\sigma>0$, the strand $\beta$ is the south-east-most among all strands associated to $\sigma$. By an argument similar to the proof of Lemma \ref{westtonorth}, $\beta$ has its ``out" point on $\partial R_N$ and ``in" point on $\partial R_W$. Permuting boundary points using Lemma \ref{exch}, we make the intersections of $\beta$ with $\partial R$ the south-most ``in" point in $\partial R_E$ and the east-most ``out" point in $\partial R_N$.\\

Since the total homology of all zig-zag loops is zero, the total intersection number of the loops with any side of $R$ is zero, that is, we have an equal number of ``in" and ``out" points in any side of $R$, alternating in orientation as we move along the side. By our assumptions on $\alpha$ and $\mathfrak T$, the intersection point of $\alpha$ with $\partial R_N$ is the west-most point in $\partial R_N$ and its orientation is ``out". Therefore, the east-most point in $\partial R_N$ is an ``in" point, which means there is an ``in" point to the east of $\beta$ in $\partial R_N$. For the same reason, there is an ``out" point south of $\beta$ in $R_W$. Permuting boundary intersections using Lemma \ref{exch}, we can make the south-east-most strand $\gamma$ corresponding to $\tau$, which by the argument in Lemma \ref{westtonorth}) has a boundary point on each of these sides, pass through both these points. Using Theorem \ref{TT}, we can make $\gamma$ and $\beta$ run parallel to the boundary. Again using Theorem \ref{TT}, we can make the three strands $\alpha, \beta, \gamma$ meet just adjacent to the northeast corner of $R$ to obtain the local picture shown in Figure \ref{before1}. We isotope the triple point across the corner to obtain the configuration in Figure \ref{after1}. This achieves the shift of cyclic orders for $\rho, \sigma$ without changing the cyclic orders of strands corresponding to other rays.

\item $X_\sigma,Y_\sigma<0$.
\begin{figure}
\begin{subfigure}{.5 \textwidth}
  \centering
  \includegraphics[width=.5\linewidth]{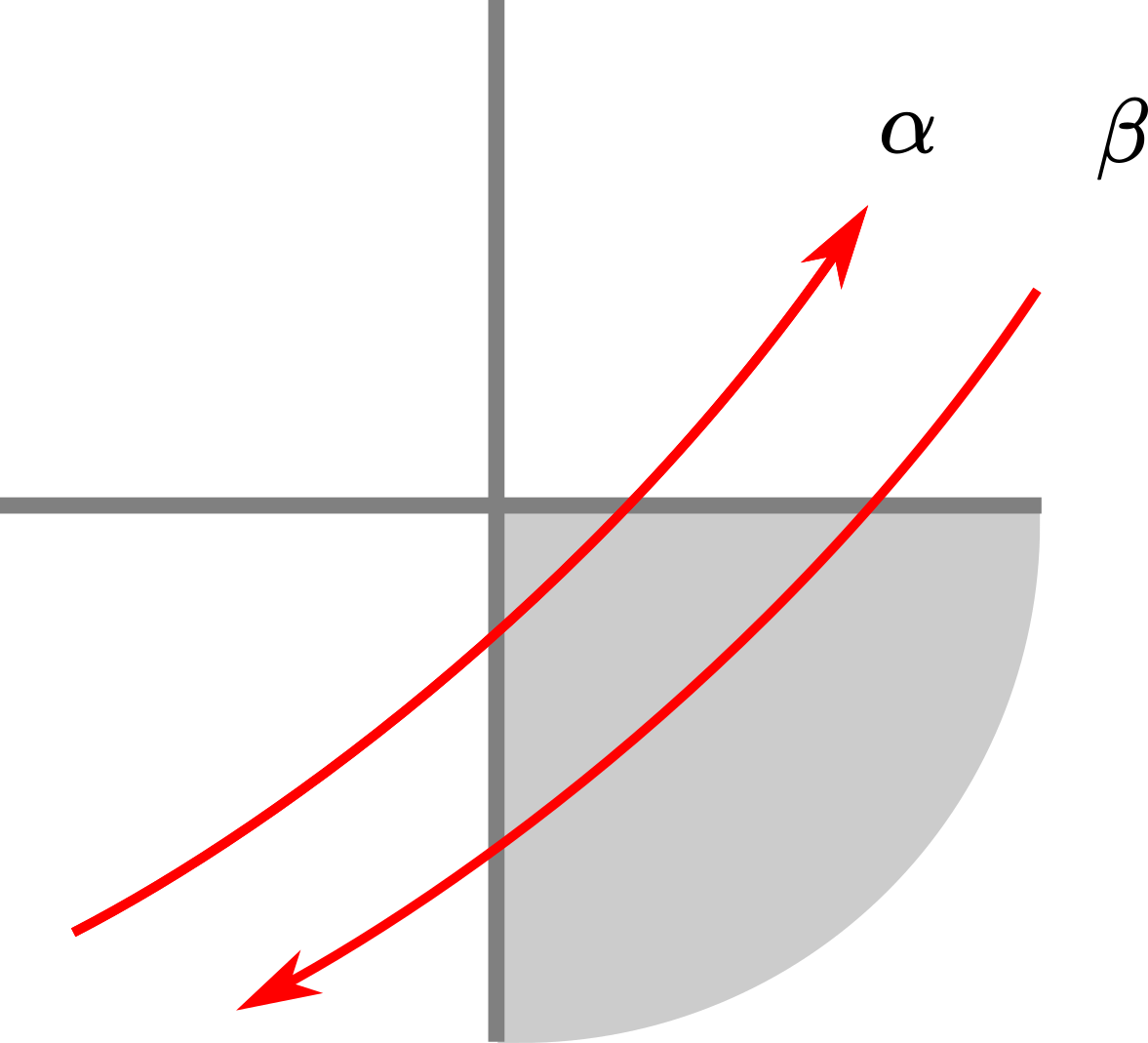}
  \caption{Initial configuration.}
  \label{before2}
\end{subfigure}%
\begin{subfigure}{.5\textwidth}
  \centering
  \includegraphics[width=.5\linewidth]{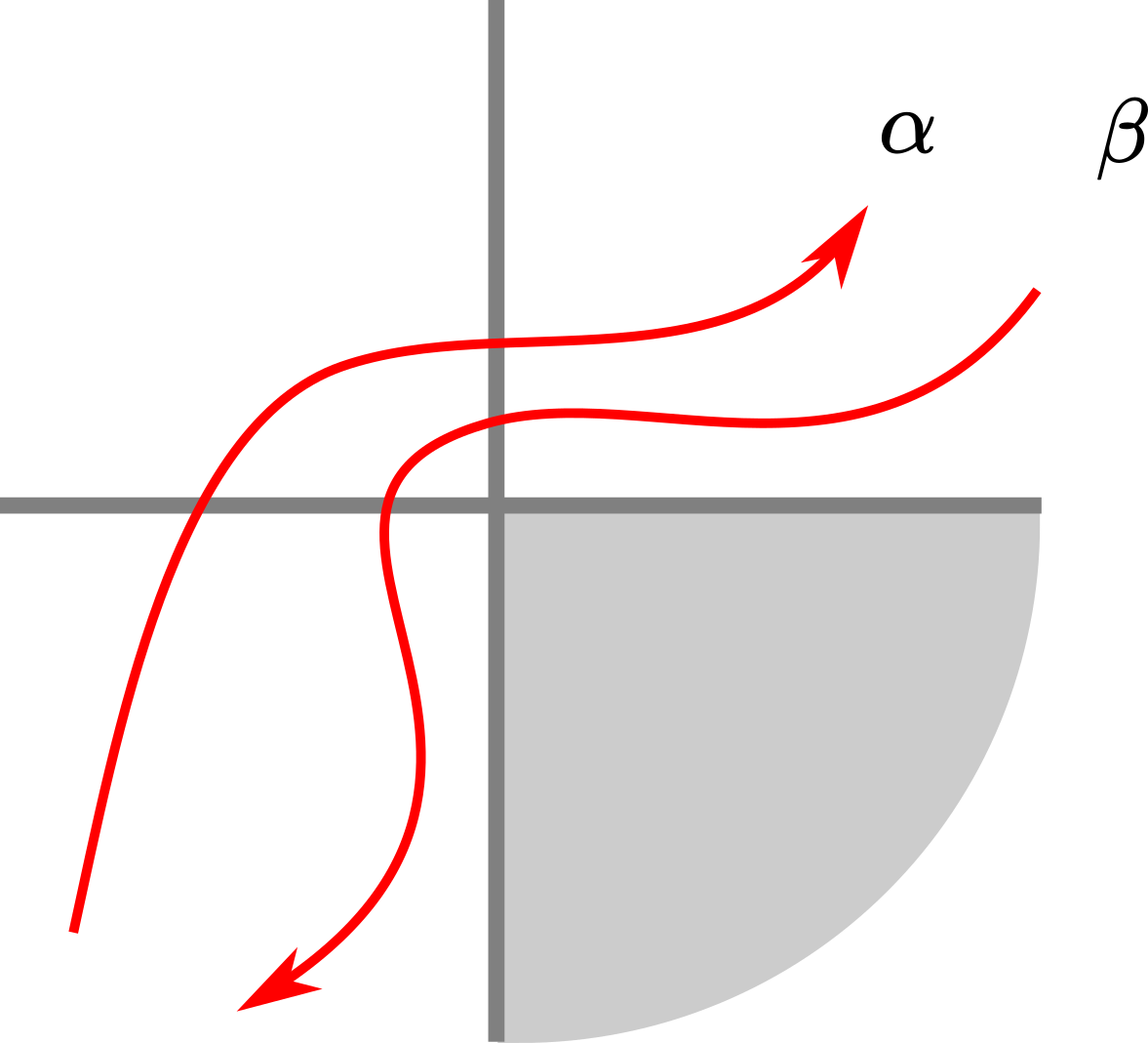}
  \caption{Configuration after isotopy.}
  \label{after2}
\end{subfigure}
\caption{Isotoping the strands past the northwest corner of $R$ in case 2.}
\end{figure}

The strand $\beta$ is the north-west-most among all strands associated to $\sigma$. By the argument in Lemma \ref{westtonorth}, it has an ``in" boundary point on $\partial R_N$ and an ``out" boundary point on $\partial R_N$. Permuting boundary intersections using lemma \ref{exch} we can make the strand $\beta$ the west-most ``in" strand in $\partial R_N$ and the north-most ``out" strand in $\partial R_W$.  Now we use Theorem \ref{TT} to make $\alpha,\beta$ run parallel to the boundary to obtain the local picture shown in Figure \ref{before2} near the northwest corner of $R$. We then isotope to get the configuration in Figure \ref{after2}.\\

\item $X_\sigma<0,Y_\sigma>0$. 

We can use ${\bf T}\in SL(2,\Z)$ to make $X_\sigma>0$, reducing to case 1.\\

\item $X_\sigma>0,Y_\sigma<0$.

This case cannot occur because of convexity of $N$.
\end{enumerate}
 
\end{proof}

\begin{proof}[Proof of Theorem \ref{surj}]
We need to find a sequence of 2-2 moves and isotopy $t:\mathfrak T  \ra \mathfrak T$ such that $\psi(t)=\delta_{\rho}-\delta_{\sigma}$. Using lemma \ref{keylem}, we obtain a triple point diagram $\mathfrak S$ in $\T$. Now we use lemma \ref{exch} to permute the boundary points so that if a pair of ``in" and ``out" points in $\mathfrak T$ is connected by a strand that corresponds to a ray $\tau$, then the corresponding ``in" and ``out" points of $\mathfrak S$ are also connected by a strand corresponding to $\tau$. If $\tau \neq \rho, \sigma$, then this is the same strand as in $\mathfrak T$. When $\tau $ either $\rho$ or $\sigma$, this is a cyclically shifted strand. Let $\mathfrak U$ be the triple point diagram in $R$ thus obtained from $\mathfrak S$. Now we apply Theorem \ref{TT} to convert $\mathfrak U$ to $\mathfrak T$ using a sequence of 2-2 moves and isotopy in R. We define $t$ to be the sequence of 2-2 moves and isotopy $\mathfrak T \rightarrow \mathfrak S \ra \mathfrak U \ra \mathfrak T$. By construction, $\psi(t)=\delta_{\rho}-\delta_{\sigma}$, and the theorem is proved.

\end{proof}

\section{Trivial seed cluster transformations}\la{sec:tct}

By Theorem \ref{surj}, the homomorphism $\psi$ is surjective. To complete the proof of Theorem \ref{mainthm2}, we need to find the kernel of $\psi$. 

\paragraph{The spectral transform.}
We follow \cite{GK12}*{Section 7}.  A \textit{spectral data} is a triple $(C,S,\nu)$ where:

\begin{enumerate}
    \item $C$ is a curve in $|D_N|$.
    \item $S$ is a degree $g$ effective divisor in $C$, where $g$ is the number of interior lattice points of $N$.
    \item $\nu=\{\nu_\rho\}_{\rho \in \Sigma(1)}$ is a collection of bijections $\nu_\rho: Z_\rho \xrightarrow[]{\sim} C \cap D_\rho$ (recall that $|E_\rho|=|Z_\rho|=|C \cap D_\rho|$). 
    
\end{enumerate}
Let $\mathcal S_N$ be the moduli space parameterizing the spectral data related to $N$.\\

Fix a minimal bipartite graph $\Gamma$ with Newton polygon $N$, and a white vertex ${\bf w}$ of $\Gamma$. There is a rational map, called the \textit{spectral transform}, defined by Kenyon and Okounkov \cite{KO}, 
\begin{align*}
    \kappa_{\Gamma, {\bf w}}:\mathcal X_N (\C) & \dashrightarrow \mathcal S_N\\
    wt &\mapsto (C,S,\nu),
\end{align*}
as follows:
\begin{enumerate}
    \item $C$ is the closure of $C_0$ in $X_N$, and is called the \textit{spectral curve}. By Theorem \ref{Kastthm}, $C \in |D_N|$. The points in $C \setminus C_0 = \bigcup_{\rho \in \Sigma(1)} C \cap D_\rho$ are called the \textit{points at infinity}.
    \item $S$ is a degree $g$ effective divisor in $C_0$ defined as follows: Consider the following exact sequence of sheaves given by the Kasteleyn operator:
\begin{align*}  0 \ra \bigoplus_{\text b \in B(\Gamma)} \mathcal O_{(\C^*)^2} \xrightarrow[]{K(z,w)}\bigoplus_{\text w \in W(\Gamma)}\mathcal O_{(\C^*)^2} \ra \text{coker }K(z,w) \ra 0.
\end{align*}

    When $C_0$ is smooth, which is true when $wt$ is generic, coker $K(z,w)$ is the pushforward of a line bundle $\mathcal L$ on $C_0$. The image of the section $\delta_{\bf w}$ of $\bigoplus_{\text w \in W(\Gamma)}\mathcal O_{(\C^*)^2}$ in coker $K(z,w)$ restricts to a section of $\mathcal L$. The divisor $S$ is defined to be the divisor of zeroes of this section. It is a degree $g$ effective divisor (see \cite{KO}*{Theorem 1} for a proof when $X_N=\P^2$ and \cite{GGK} for the general case.)
    
    \item $\nu$ is the bijection between zig-zag paths and points at infinity defined by the following property: $\nu(\alpha)$ is the point in $C \cap D_\rho$ where $\restr{K(z,w)}{\alpha}$ is singular, where $\restr{K(z,w)}{\alpha}$ denotes the Kasteleyn matrix of the zig-zag path $\alpha$ viewed as a bipartite graph in $\T$. The coordinates of $\nu(\alpha)$ are determined by $wt(\alpha).$
\end{enumerate}

The following important result was observed by Goncharov and Kenyon in \cite{GK12} and proved by Fock. 
\begin{theorem}[Fock, 2015 \cite{F15}] \label{bir}
The spectral transform is birational.
\end{theorem}

\paragraph{The discrete Abel map.}

Let $\widetilde \Gamma$ be the preimage of $\Gamma$ in the universal cover of $\T$. Let $\text{Div}_\infty(C)$ denote the divisors at infinity of $C$, that is $\Z$-linear combinations of the points at infinity. Following Fock \cite{F15}, we define the \textit{discrete Abel map}
\begin{align*}
{\bf d_0}:\text{Vertices of }\widetilde \Gamma \ra \text{Div}_\infty(C),
\end{align*}
using the following rules: For a choice of white vertex $\text w$ of $\widetilde \Gamma$, we have the normalization ${\bf d_0}({ \text w})=0$, and for any path $\gamma$ from $\text v_1$ to $\text v_2$, we have
\be \la{damprop}
{\bf d_0}(v_2)-{\bf d_0}(v_1)=\sum_{\text{Zig-zag paths }\alpha }\langle \alpha,\gamma \rangle \nu(\alpha),
\ee
where $\langle \cdot , \cdot \rangle$ is the intersection form on the universal cover of $\T$.
${\bf d_0}$ can be effectively computed by the following procedure: If $\text{bw}$ is an edge, with zig-zag paths $\alpha,\beta$ containing $\text{bw}$, then 
$$
{\bf d_0}(\text w)={\bf d_0}(\text b)-\nu(\alpha)-\nu(\beta).
$$

We have an embedding
\begin{align}
H_1(\T,\Z) &\hookrightarrow  Div_\infty(C) \nonumber\\
\gamma &\mapsto \text{div}_{C} (z,w)^\gamma=\sum_{\text{Zig-zag paths }\alpha }\langle \alpha,\gamma \rangle \nu(\alpha), \la{homologydivisor}
\end{align}
 where $(z,w)^\gamma$ denotes the character of $T=(\C^*)^2$ associated to $\gamma \in H_1(\T,\Z)$. ${\bf d}$ is $H_1(\T,\Z)$-equivariant:
 $$
 {\bf d_0}(v+\gamma)={\bf d_0}(v)+\gamma,
 $$
so that although ${\bf d_0}(\text v)$ is not well-defined for a vertex $\text v$ of $\Gamma$, the divisor class $[{\bf d_0}(\text v)]$ is the same for all lifts of $\text v$ to $\widetilde{\Gamma}$ and therefore well-defined. Therefore we define
\begin{align*}
    {\bf d}: V(\Gamma) &\ra \text{Cl}(C)\\
    \text v &\mapsto [{\bf d_0}(\widetilde {\text v})],
\end{align*}
where $\widetilde{\text v}$ is any lift of $\text v$ in $\widetilde \Gamma$, and $\text{Cl}(C)$ is the divisor class group of $C$.

\begin{example} Consider the bipartite torus graph in Figure \ref{bfgraph} with zig-zag paths labeled as in Figure \ref{bfzzpaths}. The discrete Abel map normalized so that ${\bf d}({\bf w})=0$ is as follows.
\begin{align*}
    {\bf d}({\bf w_1})&=[-\nu(\alpha_1)+\nu(\gamma)],\\
     {\bf d}({\bf b_1})&=[\nu(\alpha_2)+\nu(\beta)],\\
     {\bf d}({\bf b_2})&=[\nu(\beta)+\nu(\gamma)],
\end{align*}
where the zig-zag paths are labeled as in Figure \ref{bfzzpaths}.
\end{example}

\paragraph{Elementary transformations and induced discrete Abel maps.}

\begin{figure}
\centering
{\includegraphics[width=0.9\textwidth]{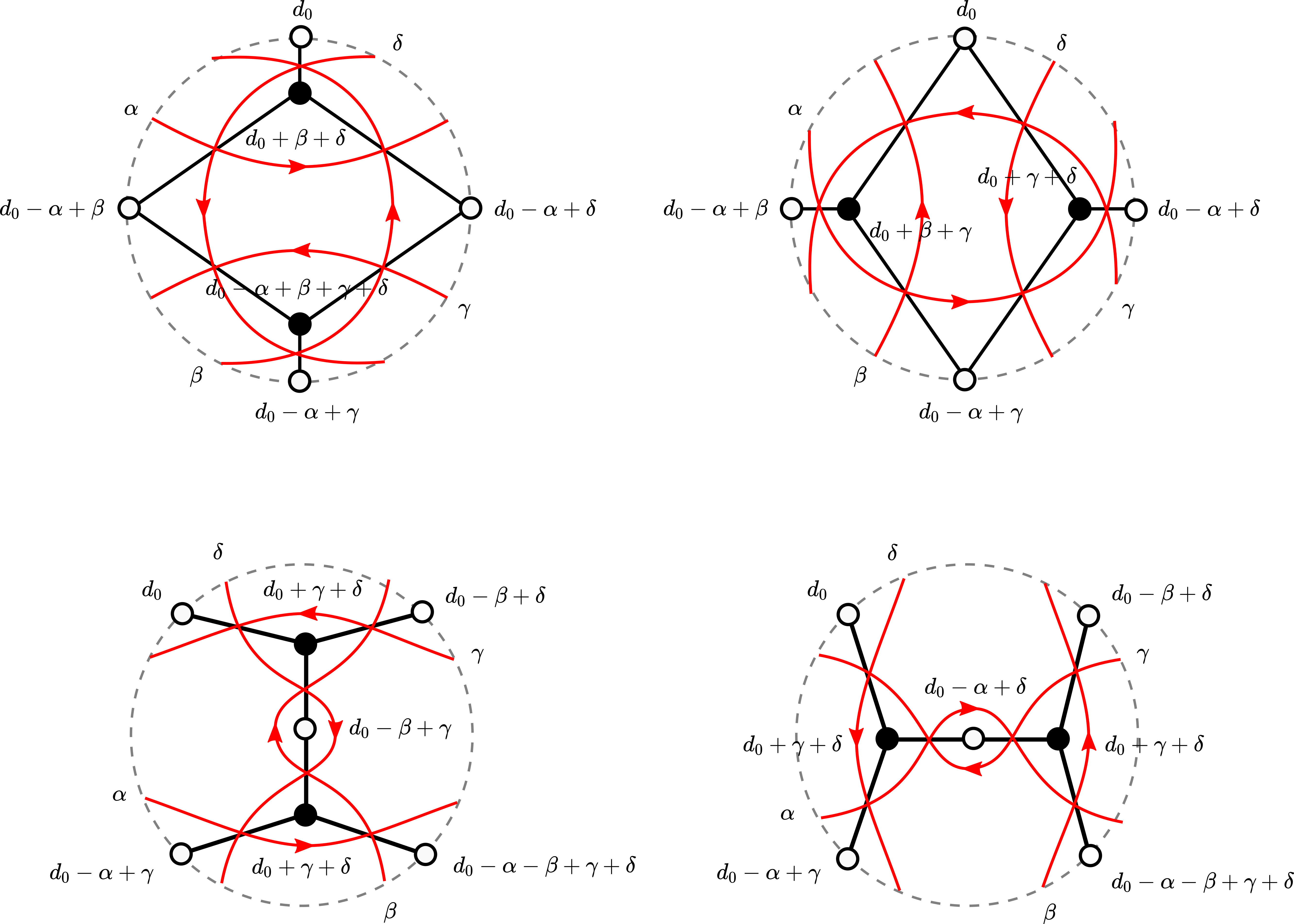}}
\caption{Induced discrete Abel maps.}\label{etdam}
\end{figure}
We now describe how $\nu$ changes under isotopy and elementary transformations and use this to define induced discrete Abel maps. 
\begin{enumerate}
    \item Suppose $s:\Gamma \ra \Gamma$ is an automorphism of $\Gamma$ induced by an isotopy in $\T$. $s$ induces a bijection of the set of zig-zag paths $Z$ with itself that preserves $Z_\rho$. Let 
    \begin{align*}
    \mu_s:\mathcal L_\Gamma &\ra \mathcal L_\Gamma\\
    wt &\mapsto wt \circ s^{-1},
    \end{align*}
    be the induced birational map of weights. If $\alpha$ is a zig-zag path in the graph $\Gamma$ after the isotopy, the point at infinity $\nu_s(\alpha)$ associated with it is determined by $\mu_s(wt(\alpha))=wt(s^{-1}(\alpha))$. Therefore we have $\nu_{s}(\alpha):= \nu(s^{-1}(\alpha)).$\\
    
    If ${\bf d}$ is a discrete Abel map on $\Gamma$, we define an \textit{induced discrete Abel map} ${\bf d}_s$ on $\Gamma$ by the rule: ${\bf d}_s(\text v):={\bf d}(s^{-1}(\text v))$.

    \item Let $s:\Gamma_1 \ra \Gamma_2$ be an elementary transformation and let $Z_1$ and $Z_2$ denote the zig-zag paths of $\Gamma_1$ and $\Gamma_2$ respectively. Let $\mu_s:\mathcal L_{\Gamma_1} \ra \mathcal L_{\Gamma_2}$ be the induced map of weights. $s$ induces a bijection $Z_1 \xrightarrow[]{\sim} Z_2$ between zig-zag paths of $\Gamma_1$ and $\Gamma_2$ such that $\mu_s(wt(\alpha))=wt(s^{-1}(\alpha))$ for all zig-zag paths $\alpha \in Z_2$. Therefore we have $\nu_{s}(\alpha):= \nu(s^{-1}(\alpha)).$\\
    
    Suppose ${\bf d}_1$ is a discrete Abel map on a graph $\Gamma_1$. An elementary transformation $s:\Gamma_1 \ra \Gamma_2$ induces a discrete Abel map ${\bf d}_2$ on $\Gamma_2$ as follows: the elementary transformation only changes $\Gamma_1$ in a disc. The induced discrete Abel map ${\bf d}_2$ is defined to be equal to ${\bf d}_1$ outside the disc, and extended to the interior of the disc using (\ref{damprop}) and $\nu_s$ (see Figure \ref{etdam}).
\end{enumerate}

If $t$ is a sequence of graph isomorphisms and elementary transformations, we get an induced $\nu_t$ and ${\bf d}_t$ by composing.

\paragraph{(2-2) Cluster modular transformations and the spectral transform.}
Let $t: \Gamma \ra \Gamma$ be a (2-2) cluster modular transformation and let $\mu_t$ denote the induced birational automorphism of $\mathcal X_N$. Suppose ${\bf d}$ is a discrete Abel map on $\Gamma$. Let $\nu_t$ be the induced bijction between zig-zag paths and points at infinity and ${\bf d}_t$ the induced discrete Abel map. Any two discrete Abel maps on $\Gamma$ differ only by their normalization, so ${\bf d}-{\bf d}_t$ is a degree zero divisor class of $C$. The following result of Vladimir Fock will play a key role in determining which cluster modular transformations are trivial.

\begin{theorem}[Fock, 2015 \cite{F15}*{Proposition 1}]\label{fockthm}
The following diagram commutes:
\begin{center}
\begin{tikzcd}[column sep = large]
\mathcal X_N \arrow[r, dashed, "\kappa_{\Gamma,{\bf w}}"] \arrow[d,dashed,"\mu_t"] 
& \mathcal S_N \arrow[d,dashed]  \\
\mathcal X_N \arrow[r, dashed, "\kappa_{\Gamma,{\bf w}}"]
& \mathcal S_N
\end{tikzcd},
\end{center}
where the map on the left is $(C,S,\nu) \mapsto (C,S_t,\nu_t)$, where $S_t$ is the degree $g$ effective divisor satisfying
\be
S_t = S+{\bf d}({\bf w})-{\bf d}_t({\bf w}), \text{ in Cl }^g(C). \la{fockcondition}
\ee
By the Jacobi inversion theorem (Theorem \ref{thm:jacobi}), if $S$ is generic, the divisor $S_t$ is uniquely determined by the condition (\ref{fockcondition}).

\end{theorem}
Applying the Abel map $u:\text{Cl}^g(C) \ra J(C)$ to (\ref{fockcondition}), we get $u(S_t) = u(S)+ u({\bf d}({\bf w})-{\bf d}_t({\bf w}))$, which shows that $\mu_t$ becomes a translation by $u({\bf d}({\bf w})-{\bf d}_t({\bf w}))$ in $J(C)$ under the spectral transform.

\subsection{The homomorphism $\psi$ and the discrete Abel map} 
In this section, we prove the following proposition.
\begin{proposition}\label{coruniq}
The birational automorphism $\mu_t$ of $\mathcal X_N$ induced by a cluster transformation $t$ factors through $\psi$:
\begin{center}
    \begin{tikzcd}
\{\text{Cluster transformations } \Gamma \ra \Gamma\} \arrow[r, "\psi"] \arrow[rd,"t \mapsto \mu_t"'] & \mathbb Z^{\Sigma(1)}_0/j H_1(\T,\Z) \arrow[d]\\
& \text{Bir}(\mathcal X_N)
\end{tikzcd},
\end{center}
where $\text{Bir}(\mathcal X_N)$ is the group of birational automorphisms of $\mathcal X_N$.
\end{proposition}

\begin{proof}
We show that the induced discrete Abel map ${\bf d}_t$ and the induced bijection $\nu_t$ for a cluster transformation $t$ are both determined by $\psi(t)$. By Fock's Theorem \ref{fockthm}, the induced birational map $\mu_t$ is determined by ${\bf d}_t$ and $\nu_t$.\\

Let $t:\Gamma=\Gamma_0 \ra \Gamma_1 \ra \cdot \cdot \cdot \ra \Gamma_{n-1} \ra \Gamma_n \cong \Gamma$, be a cluster transformation  where $\Gamma_{i+1}$ is obtained from $\Gamma_i$ by an elementary transformation or $\Gamma_{i+1}$ is isomorphic to $\Gamma_i$ by an isotopy in $\T$. Let $\rho  \in \Sigma(1)$. Let ${\bf d}_0$ be a discrete Abel map on $\widetilde \Gamma$ with the normalization ${\bf d_0}({\bf \widetilde w})=0$, where ${\bf \widetilde w}$ is a chosen lift of ${\bf w}$. Let $\restr{\bf d_0}{C \cap D_\rho}$ denote restriction of the divisor ${\bf d_0}$ to points at infinity associated to $\rho$. The set of all zig-zag paths in the universal cover of $\T$ associated to $\rho$ subdivides the universal cover into a collection of strips $S(a)$ indexed by $a \in \Z^{C \cap D_\rho}$:
$$
S(a):=\restr{\bf d_0}{C \cap D_\rho}^{-1}(a). 
$$

 A elementary transformation or isotopy $s:\Gamma_1 \ra \Gamma_2$ lifts to an $H_1(\T,\Z)$-periodic collection of elementary transformations or isotopy $\widetilde s: \widetilde{\Gamma_1} \ra \widetilde {\Gamma_2}$ in the universal cover of $\T$. Different lifts differ by $H_1(\T,\Z)$, but our construction will be independent of these choices. Suppose ${\bf d}_0$ is a discrete Abel map on $\widetilde{\Gamma_1}$. For $a \in \Z^{C \cap D_\rho}$, let $S_1(a)$ denote the corresponding strip. We define an induced discrete Abel map ${\bf d}_{s,{\bf 0}}$ as follows:
\begin{enumerate}
    \item If $\widetilde s:\widetilde{\Gamma_1} \ra \widetilde{\Gamma_2}$ is induced by an isotopy in $\T$, we define ${\bf d}_{s,{\bf 0}}(\text v):={\bf d}_{\bf 0}(\widetilde s^{-1}(\text v))$.
    \item If $\widetilde s:\widetilde{\Gamma_1} \ra \widetilde{\Gamma_2}$ is an elementary transformation, we define it as in Figure \ref{etdam}.
\end{enumerate}
These are simply $H_1(\T,\Z)$-periodic versions of the induced discrete Abel map on $\T$ defined earlier. By construction, we have the following property: if $S_1(a)$ is the strip whose right boundary is a zig-zag path $\alpha$ of $\widetilde{\Gamma_1}$, then $S_2(a)$ is the strip whose right boundary is $\widetilde s(\alpha)$.\\

Let ${\bf d}_{t,{\bf 0}}$ be the discrete Abel map induced by the cluster transformation $\widetilde t$, obtained by composing. Suppose during $t$ a strand in $Z_\rho$ is translated by $a_\rho \gamma_z+b_\rho \gamma_w$. Then the above property implies that the strip $S_t(a)$ of $\Gamma$ is obtained from $S(a)$ by translating by $a_\rho \gamma_z+b_\rho \gamma_w$. Therefore 
\begin{align}\la{d-dt}
\restr{\left({\bf d}_{t,{\bf 0}} -{\bf d}_{{\bf 0}}\right)}{C \cap D_\rho}=\sum_{\alpha \in Z_\rho}\langle \alpha, \pi_\rho \rangle \nu(\alpha),
\end{align}
where $\pi_\rho$ is any path between a vertex in $S_t(a)$ and a vertex in $S(a)$. Choosing a different path does not affect (\ref{d-dt}). Moreover since (\ref{d-dt}) is unaffected if $a_\rho \gamma_z+b_\rho \gamma_w$ modified by a vector in the span of $X_\rho \gamma_z+Y_\rho \gamma_w$, it is determined by the projection on $(Y_\rho,-X_\rho)$ and therefore by $\psi(t)(\rho) = |E_\rho|(b_\rho X_\rho-a_\rho Y_\rho)$.

Summing over all $\rho \in \Sigma(1)$ and taking divisor classes, we get
\be \la{dchange}
{\bf d}_{t} -{\bf d}=\left[\sum_{\rho \in \Sigma(1)}\sum_{\alpha \in Z_\rho}\langle \alpha, \pi_\rho\rangle \nu(\alpha)\right].
\ee
A different choice of lift $\widetilde \Gamma_i$ of $\Gamma_i$ would modify (\ref{d-dt}) by an element of $H_1(\T,\Z)$, and therefore leave (\ref{dchange}) unchanged.
\\

 We find $\nu_t(\alpha)$ from $\psi(t)(\rho)$ as follows: if $\alpha$ is a zig-zag path in $Z_\rho$, consider a lift $\widetilde \alpha$ of it to the universal cover of $\T$. Suppose $\widetilde \alpha$ is the right boundary of a strip $S$. Since the effect of $t$ on strips is to translate them by $a_\rho \gamma_z+b_\rho \gamma_w$, where $\psi(t)(\rho)=|E_\rho|(b_\rho X_\rho-a_\rho Y_\rho)$, we get that \be \la{recipe} 
\nu_t(\alpha)=\nu(\beta),\ee
where $\beta \in Z_\rho$ is the zig-zag path such that one of its lifts to the universal cover is the right boundary of the strip $S-(a_\rho \gamma_z+b_\rho \gamma_w)$. 
Now that we have found $\nu_t$ and ${\bf d}_t-{\bf d}$, Fock's Theorem \ref{fockthm} gives us the birational map $\mu_t$.
\end{proof}

\begin{figure}
\centering
{\includegraphics[width=0.9\textwidth]{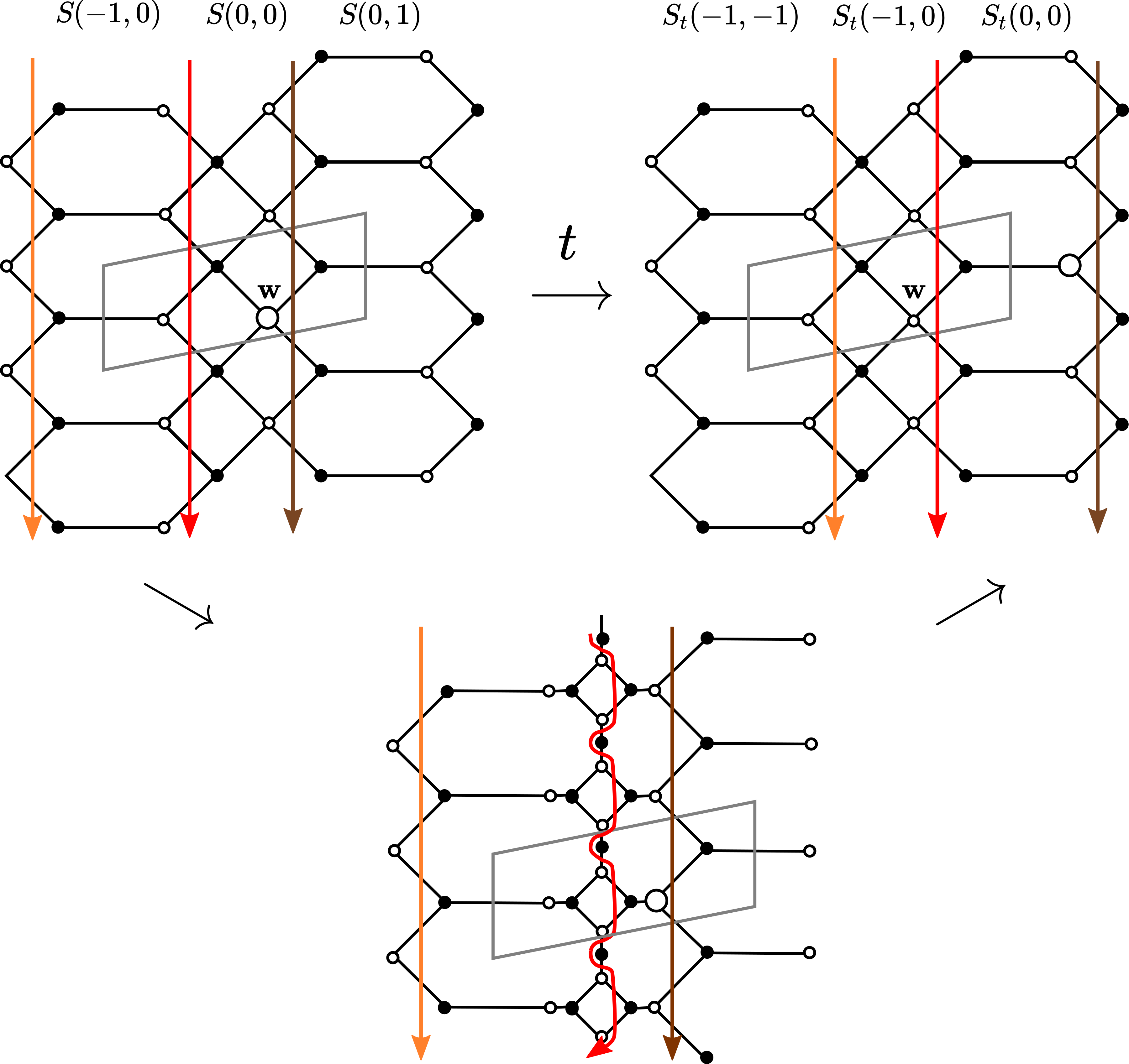}}
\caption{A shuffling algorithm of Borodin and Ferrari. In the first step, we do a spider move at the face immediately on the left of ${\bf w}$ and in the second step we all contract degree $2$ black vertices and translate. We have drawn one of the white vertices (the vertex ${\bf w}$ in the original graph) larger than the others to illlustrate the translation. The zig-zag paths associated with $\rho$ are translated right by $\frac{\gamma_z}{2}$ during $t$. The strips associated with $\rho$ are labeled at the top.}\label{borodinferrari}
\end{figure}

\subsection{Triviality of cluster transformations} \la{sec:triv}

From Fock's Theorem \ref{fockthm}, a cluster transformation $t$ is trivial if and only if $\nu_t=\nu$ and ${\bf d}({\bf w})-{\bf d}_t({\bf w}) = 0 \text{ in Pic}^0(C)$ for a generic curve $C \in |D_N|$. As a reality check, we observe that translation by $\gamma \in H_1(\T,\Z)$ is trivial: it induces $\nu_t=\nu$ and $d_t({\bf w})= d({\bf w})-[\text{div }(z,w)^\gamma] = d({\bf w}).$  Therefore by Proposition \ref{coruniq}, if $\psi(t)=0$, then $t$ is a trivial cluster transformation, so we have: 

\begin{lemma}
$\text{ker }\psi \subseteq \{\text{Trivial cluster transformations} \}$. 
\end{lemma}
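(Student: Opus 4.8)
The plan is to show the reverse inclusion is immediate from the machinery already assembled, so that the lemma reduces to a one-line deduction from Corollary~\ref{coruniq} together with the reality check computation performed just before the statement. Concretely, I would argue as follows. Suppose $t$ is a seed cluster transformation with $\psi_N(t) = 0$ in $\mathbb Z^{E_N}_0/jH_1(\T,\Z)$; I must show $\mu_t$ is the identity. By Corollary~\ref{coruniq}, the birational automorphism $\mu_t$ depends only on the class $\psi_N(t)$, so it suffices to exhibit \emph{some} seed cluster transformation with the same $\psi_N$-value whose induced map on $\mathcal X_N$ is the identity. The preimage of $0$ under $\psi_N$ (before quotienting by $jH_1(\T,\Z)$) is precisely the set of seed cluster transformations that translate every zig-zag path by an element of $H_1(\T,\Z)$ in a way that is simultaneously realized by a single global translation $m \in H_1(\T,\Z)$; and a global lattice translation is realized by a seed cluster transformation (one uses Theorem~\ref{family} to move the base graph to a translate of itself, tracking that the triple point diagram returns to itself).

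The key step is then the explicit verification, already carried out in the text preceding the statement, that a global translation by $m \in H_1(\T,\Z)$ induces $\nu_t = \nu$ and $d_t(w_0) \equiv d(w_0) - \operatorname{div}\chi^m \equiv d(w_0)$ in $\operatorname{Pic}^0(C)$, the last congruence because $\operatorname{div}\chi^m$ is the divisor of a rational function and hence trivial in the Picard group. By the criterion extracted from Theorem~\ref{fockthm}—that a seed cluster transformation is trivial if and only if $\nu_t = \nu$ and $d(w_0) - d_t(w_0) = 0$ in $\operatorname{Pic}^0(C)$ for generic $C$—such a translation is a trivial seed cluster transformation. Feeding this back through Corollary~\ref{coruniq}: $\mu_t$ equals the birational automorphism attached to the class $\psi_N(t) = 0$, which is the one attached to a global lattice translation, which is the identity. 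Hence $t \in \{\text{Trivial seed cluster transformations}\}$.

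The only point requiring genuine care is the claim that an arbitrary seed cluster transformation $t$ with $\psi_N(t) = 0$ is accounted for purely by $\psi_N(t)$ and not by finer data: this is exactly the content of Corollary~\ref{coruniq}, whose proof (in the excerpt) rests on the observation that $d_t$ and $\nu_t$ are determined by the relative positions of zig-zag paths before and after $t$, and those positions depend only on $\psi_N(t)$. I would not belabor this, as it is stated as an established corollary. So there is no real obstacle here; the lemma is a corollary of Corollary~\ref{coruniq} plus the displayed reality-check computation, and the write-up is essentially: ``If $\psi_N(t) = 0$ then by Corollary~\ref{coruniq} $\mu_t$ agrees with $\mu_{t'}$ for $t'$ a lattice translation, and the computation above shows $\mu_{t'} = \mathrm{id}$; hence $t$ is trivial.'' This is already what the surrounding prose says, so the formal proof is one or two sentences.
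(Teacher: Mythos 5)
Your proposal is correct and follows essentially the same route the paper takes: reduce via Corollary~\ref{coruniq} to checking that the birational automorphism attached to the class $0 \in \Z^{E_N}_0/jH_1(\T,\Z)$ is the identity, and then invoke the reality-check computation (translation by $m \in H_1(\T,\Z)$ gives $\nu_t = \nu$ and $d_t(w_0) \equiv d(w_0) - \operatorname{div}\chi^m \equiv d(w_0)$ in $\operatorname{Pic}^0(C)$) to conclude it is.
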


We will show that in non-degenerate situations, this inclusion is an equality. We start with the following simple consequence of Fock's Theorem \ref{fockthm}.
\begin{lemma}\label{torlem}
Let $t$ be a cluster transformation $t$ such that $\psi(t)$ is a non-zero torsion element of $\Z_0^{\Sigma(1)}/jH_1(\T,\Z)$. Then $\mu_t$ is non-trivial.
\end{lemma}
\begin{proof}
From (\ref{recipe}), we see that $\nu_t \neq \nu$. Therefore by Theorem \ref{fockthm}, $\mu_t$ is non-trivial.
\end{proof}

We need the following technical result proved in Section \ref{sec:appendix}.
\begin{theorem}\la{theorem:appendix1}
Suppose $N$ has an interior lattice point. If $L$ is a non-trivial line bundle on the toric surface $X_N$ associated to $N$, then for a generic spectral curve $C$, we have $L|_C \ncong \mathcal O_C$. 
\end{theorem}
In other words, if $N$ has an interior lattice point, a generic spectral curve witnesses the non-triviality of line bundles on $X_N$. When $N$ has no interior lattice points, this fails: consider $N=\text{Conv}\{(0,0),(1,0),(0,1),(1,1)\}$ whose toric surface is $X_N = \P^1 \times \P^1$. The line bundles $\mathcal O(n,-n), n \in \Z$, are trivial on every spectral curve $C$ since they are all isomorphic to $\P^1$ and $\restr{\mathcal O(n,-n)}{C}$ has degree $0$.

The main theorem of the paper is:

\begin{theorem}\label{maincor}
If $g \neq 0$, the cluster modular group is 
$$
G_N \cong \Z^{\Sigma(1)}_0/jH_1(\T,\Z).
$$
When $g=0,$ we have
$$G_N \cong \Z^{\Sigma(1)}_0/\{f \in \Z^{\Sigma(1)}_0:  f(\rho) \text{ is divisible by } |E_\rho|  \text{ for all }\rho \in \Sigma(1) \}.$$

\end{theorem}

\begin{proof}
When $g=0,$ $S=\varnothing$, so $\mu_t$ is determined by the action of $t$ on $\nu$. Therefore $t$ is trivial if an only if $\nu_t=\nu$, which happens if and only if $\psi(t)(\rho)$ is divisible by $|E_\rho|$ for all $\rho \in \Sigma(1)$.

When $g \neq 0$, if $t$ is a cluster transformation such that $\psi(t) \neq 0$, then either:
\begin{enumerate}
    \item $\psi(t)$ is a non-zero torsion element: It is non-trivial by Lemma \ref{torlem}.
    \item $\psi(t)$ is not a torsion element: Consider the cluster transformation $t^n$ obtained by iterating $t$, where 
    $$n=k \prod_{\rho \in \Sigma(1)} |E_\rho|, k \in \Z.$$
    Then from Theorem \ref{fockthm} applied to $t^n$, we see that the induced map of spectral data by $t^n$ is given by $(C,S,\nu) \mapsto (C,S',\nu)$, where $S'$ is the generically unique degree $g$ effective divisor satisfying 
    
\be \label{divchangelem}
S' = S+\restr{D}{C},\ee
where $$D=n \sum_\rho \frac{\psi(t)(\rho)}{|E_\rho|}D_\rho$$ is a divisor at infinity of $X_N$. For sufficiently large $k$, $\mathcal O_{X_N}(D)$ is a line bundle on $X_N$ \cite{CLS11}*{Proposition 4.2.7}. Since $\psi(t)$ is not a torsion element, $D$ is not a torsion element of the divisor class group of $X_N$ either; indeed if $l D$ is a principal divisor for some $l \in \Z$, then $lD$ is the divisor of a character $(z,w)^\gamma$ for some $\gamma \in H_1(\T,\Z)$. However this means that $\psi(t^{ln}) \in jH_1(\T,\Z)$, contradicting the assumption that $\psi(t)$ is not a torsion element.

Therefore $\mathcal O_{X_N}(D) \ncong \mathcal O_{X_N}$, and so by Theorem \ref{theorem:appendix1}, we get $\restr{\mathcal O_{X_N}(D)}{C} \ncong \mathcal O_C$ for a generic spectral curve $C$. Therefore by (\ref{divchangelem}), $t^n$ is a not a trivial cluster transformation. Since $\mu_{t^n} = \mu_t^n$, $t$ is also not a trivial cluster transformation.
\end{enumerate}
Therefore $\text{ker }\psi=\{\text{Trivial cluster transformations } \Gamma \ra \Gamma\}$. By Theorem \ref{surj}, $\psi$ is surjective, so by the first isomorphism theorem \cite{Lang}*{I, \S 3}, the cluster modular group is
$$
G_N \cong \Z^{\Sigma(1)}_0/jH_1(\T,\Z).
$$

\end{proof}
Now we compute two examples of shuffling algorithms to illustrate our general results.
\begin{example}[A shuffling algorithm of Borodin and Ferrari] \la{eg:bf}

\begin{figure}
\centering
{\includegraphics[width=0.6\textwidth]{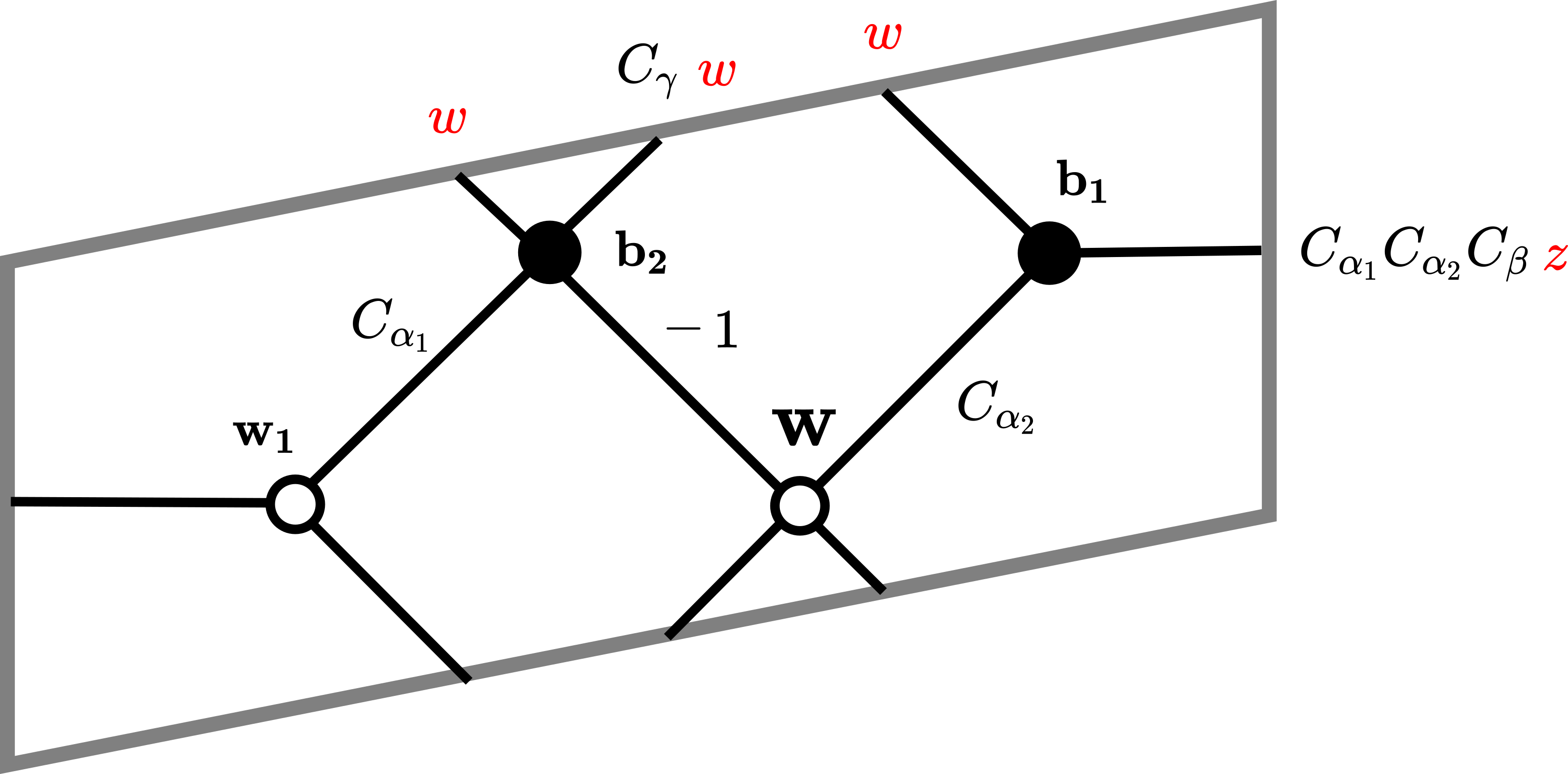}}
\caption{A cocycle representing $wt$, along with Kasteleyn signs and the characters $\phi(e)$ (red).}\label{bfgraphweights}
\end{figure}

The cluster transformation $t$ shown in Figure \ref{borodinferrari} for the graph $\Gamma$ in Figure \ref{bfgraph} was  studied by Borodin and Ferrari in \cite{BF18}. This example has appeared earlier in the physics literature, where it is known as the suspended pinch point. Suppose $wt \in \mathcal L_\Gamma$. Let $C_{\alpha}:=wt(\alpha)$ denote the monodromy of $wt$ around a zig-zag path $\alpha$. Then $\{C_{\alpha_1}, C_{\alpha_2}, C_\beta,C_\gamma\}$ is a set of coordinates for $\mathcal L_\Gamma$. A cocycle representing $wt$ in this basis is shown in Figure \ref{bfgraphweights}. The Kasteleyn matrix and spectral curve are:
\begin{align}
K(z,w)&=\begin{blockarray}{ccc}
{\bf b_1} & {\bf b_1} \\
\begin{block}{(cc)c}
  C_{\alpha_2}+  w & C_{\alpha_1}C_{\alpha_2}C_\beta z & {\bf w}\\
  -1+C_\gamma w & C_{\alpha_1}+ w & {\bf w_1}\\
\end{block}
\end{blockarray},\nonumber \\
C&=\{C_{\alpha_1} C_{\alpha_2} + C_{\alpha_1} w + C_{\alpha_2} w + w^2 + C_{\alpha_1} C_{\alpha_2}C_{\beta} z - C_{\alpha_1} C_{\alpha_2} C_{\beta} C_\gamma w z=0\}. \label{spectralcurve}
\end{align}
From (\ref{spectralcurve}), we recover the Newton polygon shown in Figure \ref{bfzzpaths}. Since the Newton polygon has no interior lattice points, $C$ is a genus $0$ curve and therefore the divisor $S$ in the spectral transform is irrelevant. There are two points at infinity of $C$ associated to the ray $\rho = \mathbb R_{\geq 0}(1,0)$:
\begin{align*}
P_1 &: z\ra 0, w=-C_{\alpha_1},\\
P_2 &: z \ra 0, w=-C_{\alpha_2},
\end{align*}
and only one point at infinity of $C$ associated to each of the other rays:
\begin{align*}
P_3 &: z \ra \frac{-1}{C_\beta}, w \ra 0;\\
P_4 &: z \ra \infty, w=\frac{1}{C_{\gamma}},\\
P_5 &: z,w \ra \infty, w/z=C_{\alpha_1} C_{\alpha_2} C_{\beta} C_\gamma.
\end{align*}
The bijection $\nu$ is:
$$
(\alpha_1,\alpha_2,\beta,\gamma,\delta) \mapsto (P_1,P_2,P_3,P_4,P_5).
$$
The spectral transform is:
\begin{align*}
    \mathcal L_\Gamma &\xrightarrow[]{\kappa_{\Gamma, {\bf w}}} \mathcal S_N\\
    (C_{\alpha_1}, C_{\alpha_2},C_\beta,C_\gamma) &\mapsto (C,\nu),
\end{align*}
and the induced map $\mu_t:\mathcal L_\Gamma \ra \mathcal L_\Gamma$ is:
$$
 (C_{\alpha_1}, C_{\alpha_2},C_\beta,C_\gamma) \mapsto (C_{\alpha_2}, C_{\alpha_1},C_\beta,C_\gamma),
$$
and the induced bijection $\nu_t$ is:
$$
(\alpha_1,\alpha_2,\beta,\gamma,\delta) \mapsto (P_2,P_1,P_3,P_4,P_5).
$$
Since $t$ exchanges the weights of the zig-zag paths $\alpha_1$ and $\alpha_2$, and also exchanges the points at infinity associated with these zig-zag paths, it follows that the diagram in Theorem \ref{fockthm} commutes in this example.

Let us choose the basis $(u_\alpha,u_\beta,u_\gamma)$ for $\Z^{\Sigma(1)}_0$, where the rays of the dual fan $\Sigma$ are labeled by zig-zag paths (see Figure \ref{bfzzpaths}). In the basis $(\gamma_z,\gamma_w)$ for $H_1(\T,\Z)$, the embedding $j$ is given by 
\[
\begin{pmatrix}
2&0\\
0&1 \\
-1&0\\
\end{pmatrix}.
\]
The homomorphism 
\begin{align*}
   \zeta: \Z^{\Sigma(1)}_0/j H_1(\T,\Z) &\ra \Z \nonumber \\
    (a,b,c) &\mapsto a+2c 
\end{align*}
is an isomorphism. We compute $\psi(t)=(1,0,0)$ and since $\zeta(1,0,0)=1$, we see that $\psi(t)$ generates $\Z^{\Sigma(1)}_0/j H_1(\T,\Z)$.
However, the cluster modular group $G_N$ is smaller. Since $t$ acts on $\mathcal L_\Gamma$ by interchanging the weights of $\alpha_1$ and $\alpha_2$, the cluster transformation $t^2$ is a trivial cluster transformation that is not in $\text{ker }\psi$. Since $\zeta \circ \psi(t^2)=2$, we get $G_N \cong \Z/2 \Z$. 
\end{example}

\begin{example}[Domino-shuffling] \la{eg:ds}
\begin{figure}
\centering
{\includegraphics[width=0.32\textwidth]{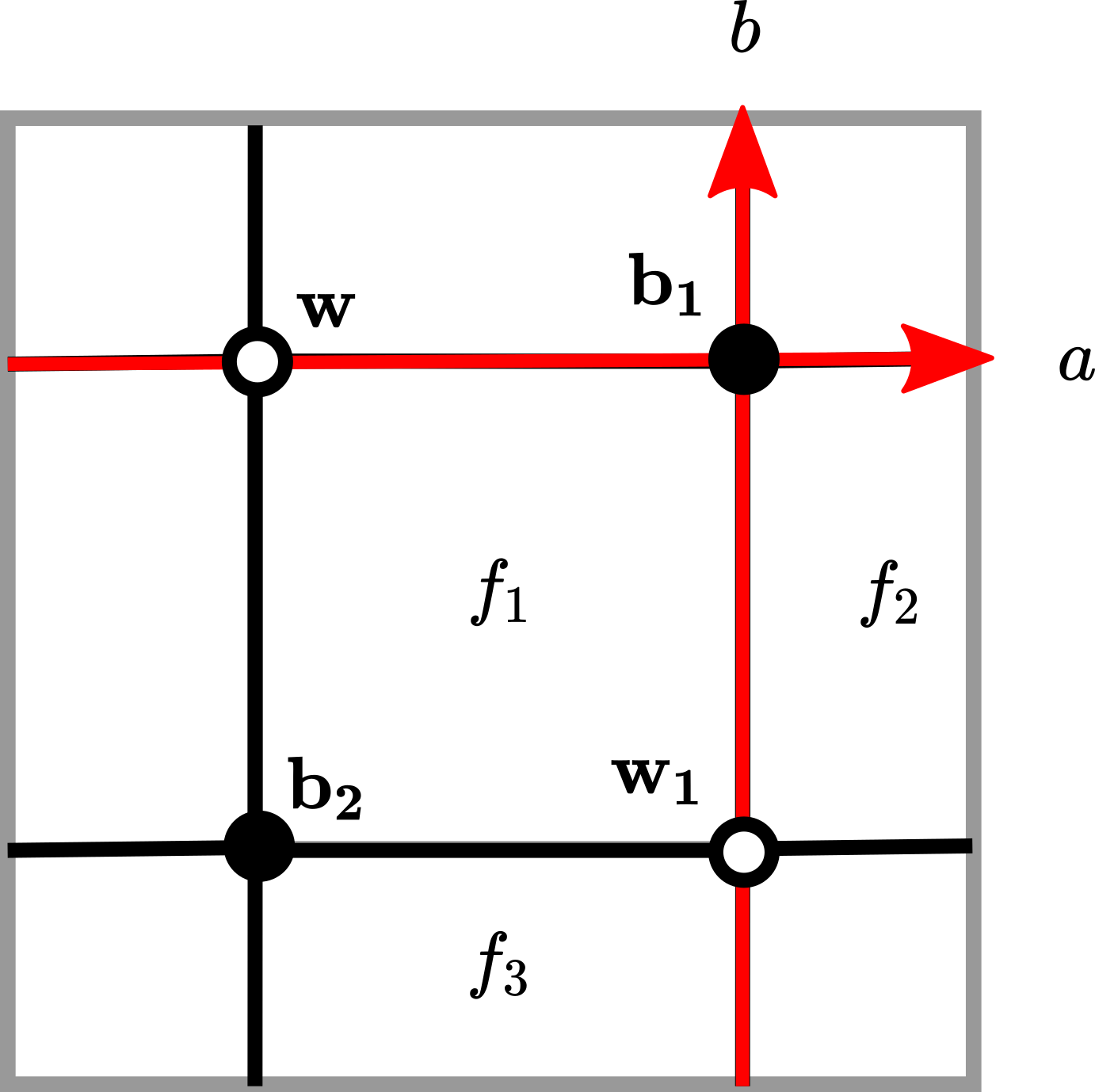}}
\caption{The bipartite torus graph involved in domino-shuffling showing the labels of the faces and the loops $a$ and $b$.}\label{dsgraph}
\end{figure}

\begin{figure}
\centering
{\includegraphics[width=0.4\textwidth]{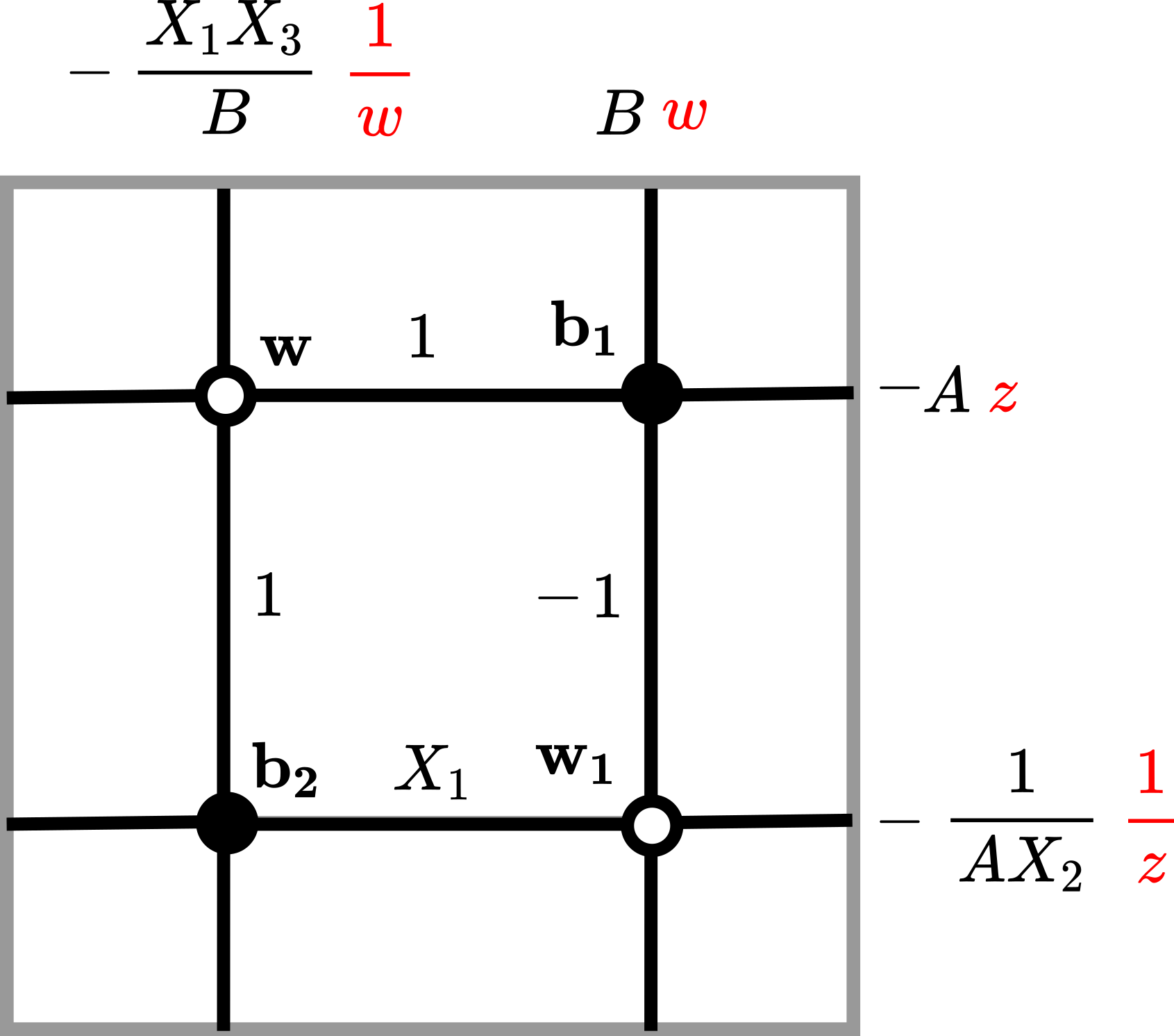}}
\caption{A cocycle representing $wt$, along with Kasteleyn signs and the characters $\phi(e)$ (red) for the graph in Figure \ref{dsgraph}.}\label{dsgraphweights}
\end{figure}

Consider the graph $\Gamma$ in Figure \ref{dsgraph}. The letters $a$ and $b$ are labeling two cycles in $H_1(\Gamma,\Z)$ whose projections to $\T$ generate $H_1(\T,\Z)$. Let $X_i:=wt(f_i), i=1,2,3$ and let $A:=wt(a), B:=wt(b)$. Then $(X_1,X_2,X_3,A,B)$ gives coordinates on $\mathcal L_\Gamma$. A cocycle representing $wt$ is shown in Figure \ref{dsgraphweights}. The Kasteleyn matrix and the spectral curve are
\begin{align}
K(z,w)&=\begin{blockarray}{ccc}
{\bf b_1}& {\bf b_2} \\
\begin{block}{(cc)c}
  1-A z & 1-\frac{X_1 X_3}{B w} & {\bf w}\\
  -1+Bw & X_1-\frac{1}{A X_2 z} & {\bf w_1}\\
\end{block}
\end{blockarray},\nonumber \\
C&=\left(1  + X_1 + \frac{A^2}{X_2} + X_1 X_3\right)- B w - \frac{X_1 X_3}{B w} - \frac{A}{X_2 z} - A X_1 z. \label{spectralcurve2}
\end{align}
There is only one zig-zag path in each direction, so $\nu$ is trivial. Let $\alpha,\beta,\gamma,\delta$ denote the zig-zag paths in counterclockise order starting from the blue edge in Figure \ref{npds}. The divisor $S=(p,q)$ has only one point, which is found by simultaneously solving $\text{adj }K(z,w)_{\bf b w}=0$, for all black vertices ${\bf b}$ of $\Gamma$.
 In this case, we get 
 $$
 (p,q)=\left(\frac{1}{A X_1 X_2}, \frac{1}{B}\right).
 $$
The spectral transform is given by
\begin{align}\la{stexample}
    \mathcal L_\Gamma &\xrightarrow[]{\kappa_{\Gamma, {\bf w}}} \mathcal S_N \nonumber \\
    (X_1,X_2,X_3,A,B) &\mapsto (C,(p,q)).
\end{align}
Consider the cluster transformation $t$ shown in Figure \ref{octrec2}. The induced map $\mu_t:\mathcal L_\Gamma \ra \mathcal L_\Gamma$ is:

 \begin{align}
     X_1 &\mapsto X_2 \frac{(1+X_1)^2}{(1+X_1 X_2 X_3)^2},\nonumber\\
     X_2 &\mapsto X_1^{-1},\nonumber\\
     X_3 &\mapsto X_1 X_2 X_3,\nonumber\\
     A &\mapsto \frac{AX_1(1+X_1X_2X_3)}{1+X_1}, \nonumber\\
     B &\mapsto \frac{B X_2}{(1+X_1)(1+X_1 X_2 X_3)}.\la{mutexample}
 \end{align}
 Let $S_t=(p_t,q_t)$ be the induced divisor. We can find the induced divisor in two different ways, verifying Theorem \ref{fockthm}.
 \begin{enumerate}
     \item We have ${\bf d}_t({\bf w})=[\nu(\alpha)-\nu(\beta)]=[\nu(\gamma)-\nu(\delta)]$, where the second equality comes from $\text{div}_C{z}=-\nu(\alpha)+\nu(\beta)+\nu(\gamma)-\nu(\delta)$. The unique point that satisfies
     $$
     S_t = S+{\bf d}({\bf w})-{\bf d}_t({\bf w}),
$$
      must be
 \be \la{pqt}
 p_t=\frac{1+X_1 X_2 X_3}{A X_2 (1+ X_1) },\quad q_t=\frac{1+X_1 X_2 X_3}{B
X_2 (1+ X_1) },
 \ee
because the rational function 
$$
R(z,w)=\frac{B w(-1+A X_1 X_2 z)}{-A X_1 X_2 X_3 z+B w(-1+A(1+X_1)X_2 z)},
$$
has zeros at $(p,q)$ and $\nu(\delta)$ and poles at $(p_t,q_t)$ and $\nu(\gamma)$, as can be checked using the coordinates of the points at infinity:
\begin{align*}
\nu(\gamma)&: z,w\ra 0, \frac{w}{z}=\frac{-X_1 X_2 X_3}{AB},\\
\nu(\delta)&: z \ra \infty, w \ra 0, z w = \frac{-X_3}{AB}.
\end{align*}
\item We can find the composition $\kappa_{\Gamma, {\bf w}} \circ \mu_t$ using (\ref{stexample}) and (\ref{mutexample}) to get (\ref{pqt}). 
 \end{enumerate}
Now we compute $G_N$ (see also \cite{FM16}*{Section 8.2}). We choose the basis $(u_\alpha,u_\beta,u_\gamma)$ for $\Z^{\Sigma(1)}_0$, where the rays in $\Sigma(1)$ are labeled by zig-zag paths, and the basis $(\gamma_z,\gamma_w)$ for $H_1(\T,\Z)$. The embedding $j$ is given in these bases by
\[
\begin{pmatrix}
-1&-1\\
1&-1\\
1&1
\end{pmatrix}.
\]
We can find the cokernel of $j$ i.e. $\Z^{\Sigma(1)}_0/jH_1(\T,\Z)$ by computing its Smith normal form (see for example \cite{sand}*{Section 2.4.2}). The Smith decomposition of the matrix of $j$ is
\[
\begin{pmatrix}
-1&0&0\\
-1&-1&0\\
1&0&1
\end{pmatrix}
\begin{pmatrix}
-1&-1\\
1&-1\\
1&1
\end{pmatrix}\begin{pmatrix}
1&-1\\
0&1
\end{pmatrix}= \begin{pmatrix}
1&0\\
0&2\\
0&0
\end{pmatrix},
\]
from which we see that we have an isomorphism \begin{align*}
  \zeta: \Z^{\Sigma(1)}_0/jH_1(\T,\Z) &\ra \Z \oplus \Z/2\Z\\
  (a,b,c) &\mapsto (a+c,-a-b + 2 \Z).
\end{align*} 
 We have $\psi(t)=(-1,1,0)$ (see Figure \ref{npds}), so $\zeta \circ \psi(t)= (-1,0+2\Z)$. Therefore $t$ is a generator of infinite order. Consider the cluster modular transformation $s$ given by the translation of $\Gamma$ by $\frac 1 2 (\gamma_z,\gamma_w)$. We have $\psi(s)=(-1,0,1,0)$, so that $\zeta \circ \psi(s)=(0,1+2 \Z)$ is a generator of order $2$. Since $s$ and $t$ are non-trivial cluster transformations, we get that the cluster modular group $G_N \cong \Z \oplus \Z/2\Z$ and is generated by $s$ and $t$.
\end{example} 
 
\section{Proof of Theorem \ref{theorem:appendix1}}\label{sec:appendix}
The goal of this Section is to prove Theorem \ref{theorem:appendix1}. The arguments assume some familiarity with algebraic geometry, we refer to Section \ref{subsection_ag} for more details. All polygons in this Section will be convex, integral and embedded in $\mathbb{R}^2$.

We define a \emph{building block} polygon to be a polygon $\Delta \subseteq \mathbb{R}^2$ that contains exactly one interior lattice point, and at most five lattice points in total. Note that a building block polygon has either three or four edges, and modulo lattice equivalence, there are only four (see Figure \ref{bbpoly}).

\begin{figure}
	\centering
	\begin{tikzpicture}[scale=1] 
		\draw (1,0) -- (0,1) -- (-1,-1) -- (1,0);
		\draw[fill=black] (0,0) circle (2pt);
		\draw[fill=black] (1,0) circle (2pt);
		\draw[fill=black] (0,1) circle (2pt);
		\draw[fill=black] (-1,-1) circle (2pt);
	\end{tikzpicture}\hspace{1cm}
	\begin{tikzpicture}[scale=1] 
		\draw (1,0) -- (0,1) -- (-1,0) -- (0,-1)--(1,0);
		\draw[fill=black] (0,0) circle (2pt);
		\draw[fill=black] (1,0) circle (2pt);
		\draw[fill=black] (0,1) circle (2pt);
		\draw[fill=black] (-1,0) circle (2pt);
		\draw[fill=black] (0,-1) circle (2pt);
	\end{tikzpicture}\hspace{1cm}
	\begin{tikzpicture}[scale=1] 
		\draw (1,0) -- (0,1) -- (-1,1) -- (0,-1)--(1,0);
		\draw[fill=black] (0,0) circle (2pt);
		\draw[fill=black] (1,0) circle (2pt);
		\draw[fill=black] (0,1) circle (2pt);
		\draw[fill=black] (-1,1) circle (2pt);
		\draw[fill=black] (0,-1) circle (2pt);
	\end{tikzpicture}\hspace{1cm}
	\begin{tikzpicture}[scale=1] 
		\draw (1,-1) -- (0,1) -- (-1,-1) -- (1,-1);
		\draw[fill=black] (0,0) circle (2pt);
		\draw[fill=black] (1,-1) circle (2pt);
		\draw[fill=black] (0,1) circle (2pt);
		\draw[fill=black] (-1,-1) circle (2pt);
		\draw[fill=black] (0,-1) circle (2pt);
	\end{tikzpicture}\hspace{1cm}
	\caption{The four building block polygons modulo lattice equivalence.}
	\label{bbpoly}
\end{figure}

Our first goal is to show that given any polygon $N$ that contains an interior lattice point, one can find a building block polygon $\Delta$ with $\Delta \subseteq N$ (Proposition \ref{prop:every:pol:contains:a:bb:pol}). We will find $\Delta$ as the polygon with the least number of lattice points among all polygons which are both contained in $N$ and have at least one interior lattice point. We begin with a few preparatory lemmas.

\begin{lemma}\label{lemma:minimal:pol:has:up:to:4:edges}
Consider a polygon $N$ that contains an interior lattice point. Then there exists a polygon $Q\subseteq N$ which contains an interior lattice point and at most four edges.
\end{lemma}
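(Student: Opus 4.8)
\emph{Plan.} The natural approach is induction on the number $n$ of edges of $P$, repeatedly cutting off an ``ear'' while keeping a chosen interior lattice point $p_0$ in the interior. If $P$ already has $n\le 4$ edges, take $Q=P$. So assume $n\ge 5$, write the vertices in cyclic order $v_1,\dots,v_n$ (indices mod $n$), and for each $i$ let $\triangle_i:=\mathrm{conv}(v_{i-1},v_i,v_{i+1})$ be the ear at $v_i$. Since $P$ is convex, $\triangle_i\subseteq P$, and removing it produces the integral convex polygon $P_i:=\mathrm{conv}(\{v_j:j\ne i\})$, which has strictly fewer edges than $P$. The point is that $P=\triangle_i\cup P_i$ with $\triangle_i\cap P_i$ equal to the chord $[v_{i-1},v_{i+1}]$; hence if $p_0\notin\triangle_i$ then $p_0$ lies in $P_i$ and avoids both $\partial P$ and that chord, so $p_0$ is interior to $P_i$, and we may replace $P$ by $P_i$ and invoke the inductive hypothesis.

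\emph{Key step.} It remains to produce, for $n\ge 5$, a vertex $v_i$ with $p_0\notin\triangle_i$. I claim in fact $\triangle_1\cap\triangle_3=\{v_2\}$. Indeed, the chord $[v_2,v_n]$ splits $P$ into $\triangle_1$ (the side containing $v_1$) and $P_1=\mathrm{conv}(v_2,\dots,v_n)$, meeting only along $[v_2,v_n]$; since $\triangle_3\subseteq P_1$ this gives $\triangle_1\cap\triangle_3\subseteq[v_2,v_n]\cap\triangle_3$. Viewed from the vertex $v_2$, the remaining vertices of $P$ occur in strict angular order $v_3,v_4,\dots,v_n,v_1$, so (using $n\ge 5$, whence $v_n\ne v_4$) the ray $v_2\to v_n$ lies strictly outside the angular wedge that $\triangle_3$ occupies at $v_2$; thus $[v_2,v_n]$ meets $\triangle_3$ only at $v_2$, proving the claim. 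Since $p_0$ is interior it is not the vertex $v_2$, so $p_0\notin\triangle_1$ or $p_0\notin\triangle_3$, and we cut the corresponding ear. This closes the induction: after finitely many cuts the edge count drops to at most $4$, giving the desired $Q\subseteq P$, integral, with $p_0$ in its interior.

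\emph{Main obstacle.} The only delicate ingredient is the angular-ordering fact that underlies $\triangle_1\cap\triangle_3=\{v_2\}$, together with the minor bookkeeping of degenerate ``vertices'' (three consecutive collinear lattice points on $\partial P$), which one should discard first without increasing the edge count; everything else is routine. As an alternative that avoids the induction altogether, one can appeal to Steinitz's theorem: a point in the interior of $\mathrm{conv}(S)$ in $\mathbb{R}^d$ already lies in the interior of $\mathrm{conv}(S')$ for some $S'\subseteq S$ with $|S'|\le 2d$. Applying this with $d=2$ and $S$ the vertex set of $P$ yields $Q:=\mathrm{conv}(S')$ with at most four vertices, hence at most four edges, still integral, contained in $P$, and with $p_0$ in its interior (and $2$-dimensional, since its interior is nonempty).
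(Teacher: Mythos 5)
Your proof is correct, and it takes a genuinely different route from the paper's. The paper picks a polygon $Q\subseteq P$ with $x$ interior that is minimal under inclusion and derives a contradiction if $Q$ had more than four edges, by drawing the two diagonals $[a_1,a_3]$ and $[a_3,a_5]$ (the $a_i$ being vertices, despite the wording) and claiming $x$ lies interior to one of the three resulting pieces. You instead run an explicit induction on the number of edges, cutting off one ear at a time; the key point, that the ears at $v_1$ and $v_3$ meet only at $v_2\ne p_0$, guarantees an ear avoiding $p_0$, and you verify cleanly that $p_0$ stays in the \emph{interior} of the smaller polygon because it avoids the cutting chord. That last check is actually a small advantage of your argument: the paper's step ``$x$ is an interior point of one of these three sub-polytopes'' silently fails when $x$ happens to lie on one of the two diagonals (a different pair of diagonals, or an ad hoc quadrilateral, repairs it, but the paper does not say so). Your Steinitz alternative is also a legitimate and arguably slicker route: Steinitz's theorem in $\mathbb{R}^2$ gives at most four of the vertices of $P$ whose convex hull still has $p_0$ in its interior, immediately producing an integral $Q\subseteq P$ with at most four vertices; this sidesteps all the chord bookkeeping at the cost of invoking a named theorem rather than arguing from scratch.
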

\begin{proof}

Let $x$ be an interior lattice point of $N$. Choose a polygon $Q\subseteq N$ which is minimal among all polygons contained in $N$ that contain $x$ as an interior lattice point, with the partial order induced by inclusion. We aim to show that $Q$ has either three or four edges. If not, let $a_1,\dots,a_n$ denote the vertices of $Q$, with $n>4$, labeled in clockwise order.
Consider the line segments joining $a_1$ with $a_3$, and $a_3$ with $a_5$ respectively. They divide $Q$ into three smaller polygons, each with fewer lattice points, and these segments intersect only at $a_3$, since by assumption $a_5$ is distinct from $a_3$ and $a_1$. Therefore $x$ is an interior point of either the polygon with vertices $a_1,a_3,a_4,\dots,a_n$ or the polygon with vertices $a_1,a_2,a_3,a_5,a_6, \dots , a_n$. This contradicts the minimality of $Q$.\end{proof}
\begin{lemma}\label{lemma:at:most:an:int:pt}
Consider a polygon $N$ with an interior lattice point. Then there is a polygon $Q\subseteq N$ which contains exactly one interior lattice point.

\begin{figure}
	\centering
	\begin{tikzpicture}[scale=0.8] 
		\draw (2,-0.2) -- (1.4,1) -- (-1,1.2) -- (-2,-0.4) -- (-1.5,-1.5) -- (1,-1.2) -- (2,-0.2);
		\draw [dashed] (2,-0.2) -- (-1.5,-1.5) -- (-1,1.2);
		\draw[fill=black] (0,0) circle (2pt);
		\draw[fill=black] (2,-0.2) circle (2pt);
		\draw[fill=black] (1.4,1) circle (2pt);
		\draw[fill=black] (-1,1.2) circle (2pt);
			\draw[fill=black] (-2,-0.4) circle (2pt);
		\draw[fill=black] (-1.5,-1.5) circle (2pt);
		\draw[fill=black] (1,-1.2) circle (2pt);
		\node (no) at (0.25,0.25) {$x$};
		\node (no) at (2+0.4,-0.2) {$a_5$};
		\node (no) at (-1,1.45) {$a_1$};
		\node (no) at (-1.5-0.25,-1.5-0.25) {$a_3$};
	\end{tikzpicture}
	\caption{Figure for Lemma \ref{lemma:at:most:an:int:pt}.}
\end{figure}

\end{lemma}
\begin{proof}
Consider a subpolygon $Q$ of $N$, which has at least two interior lattice points, $x$ and $y$. Up to shrinking $Q$, we can assume, by Lemma \ref{lemma:minimal:pol:has:up:to:4:edges}, that either $Q$ is a triangle or a quadrilateral.

If $Q$ is a triangle, consider the line through $x$ and $y$. It must meet an edge $\ell$ of $Q$. Let $a$ and $b$ be the vertices of $\ell$. Up to swapping $x$ and $y$, we can assume that the distance between $x$ and $\ell$ is less than the distance between $y$ and $\ell$. Then the triangle with vertices $y,$ $a$
 and $b$, with an interior point (namely $x$) has fewer interior lattice points than $Q$ ($y$ is not an interior point).
 
 If $Q$ is a quadrilateral, consider the two diagonals of $Q$. They have a single intersection point, so there must be a diagonal $\ell$ which does not contain both $x$ and $y$. Then $\ell$ divides $Q$ into two smaller polygons, and one of them must have an interior point.
 
 Therefore, if we consider a polygon which is minimal with respect to inclusion and has an interior lattice point, it must have a exactly one  interior lattice point.\end{proof}

\begin{figure}
	\centering
	\begin{tikzpicture}[scale=0.8] 
		\draw (1.1,2) -- (-1.2,-1.2) -- (2,-1.2) -- (1.1,2);
		\draw [dashed] (-1.2,-1.2) -- (0,0) ;
		\draw [dashed] (0,0) -- (2,-1.2);
		\draw [dashed] (0,0) -- (1.2,-1.2);
		\draw[fill=black] (0,0) circle (2pt);
		\draw[fill=black] (0.6,-0.6) circle (2pt);
		\draw[fill=black] (1.2,-1.2) circle (2pt);
		\draw[fill=black] (1.1,2) circle (2pt);
			\draw[fill=black] (2,-1.2) circle (2pt);
					\draw[fill=black] (-1.2,-1.2) circle (2pt);
		\node (no) at (0.25,0.25) {$y$};
		\node (no) at (0.6+0.25,-0.6+0.25) {$x$};
		\node (no) at (-1.45,-1.2) {$a$};
		\node (no) at (2+0.25,-1.2) {$b$};
		\node (no) at (0.2,-1.5-0.25) {$\text{Consider the triangle $\overline{aby}$}$.};
	\end{tikzpicture} \hspace{1cm} 		\begin{tikzpicture}[scale=0.8] 
		\draw (1.6,1.6)  -- (-1.5,1.5) -- (-1.3,-1.3) --(1.3,-1.3) -- (1.6,1.6);
		
		\draw [dashed] (-1.5,1.5) -- (1.3,-1.3);
		\draw [dashed] (1.6,1.6) --  (-1.3,-1.3);
		\draw[fill=black] (0,0) circle (2pt);
		\draw[fill=black] (1,1) circle (2pt);
		\draw[fill=black] (1.6,1.6) circle (2pt);
		\draw[fill=black] (-1.3,-1.3) circle (2pt);
			\draw[fill=black] (1.3,-1.3) circle (2pt);
					\draw[fill=black] (-1.5,1.5) circle (2pt);
		\node (no) at (0,0.3) {$y$};
		\node (no) at (1,1-0.25) {$x$};
		\node (no) at (-1.5-0.25,1.5+0.25) {$a$};
		\node (no) at (1.6+0.25,1.6+0.25) {$b$};
			\node (no) at (1.3+0.25,-1.3-0.25) {$c$};
		\node (no) at (0.2,-1.5-0.25) {$\text{Consider the triangle $\overline{abc}$}$.};
	\end{tikzpicture}
	\caption{Figure for Proposition \ref{prop:every:pol:contains:a:bb:pol}.}
\end{figure}

 \begin{proposition}\label{prop:every:pol:contains:a:bb:pol}
 Given any polygon $N$ with an interior lattice point, one can find a building block polygon $\Delta$ such that $\Delta \subseteq N$.
 \end{proposition}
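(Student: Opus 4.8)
The plan is to realize $\Delta$ concretely: among all (convex, integral) polygons $Q$ with $Q\subseteq P$ and at least one interior lattice point, choose $\Delta$ with the smallest number of lattice points (such a minimum exists, since $P$ itself is a candidate), and then show that this $\Delta$ is a building block.

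First I would pin down the combinatorial type of $\Delta$ using the two preparatory lemmas together with minimality. The key observation is that a convex integral polygon is the convex hull of its lattice points, so a subpolygon $Q\subseteq\Delta$ with the same set of lattice points must equal $\Delta$. Hence if Lemma \ref{lemma:at:most:an:int:pt} produces $Q\subseteq\Delta$ with exactly one interior lattice point, then either $Q=\Delta$ or $Q$ has strictly fewer lattice points; the latter contradicts minimality, so $\Delta$ has exactly one interior lattice point, call it $x$. The same reasoning applied to Lemma \ref{lemma:minimal:pol:has:up:to:4:edges} shows $\Delta$ has three or four edges.

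It remains to bound the number of boundary lattice points of $\Delta$ by four, so that $\Delta$ has at most five lattice points in all and is a building block. Suppose instead that $\Delta$ has a boundary lattice point $p$ in the relative interior of an edge. The strategy is to cut $\Delta$ along a lattice segment avoiding $x$: then $x$ lies in the interior of exactly one of the two resulting subpolygons, which is a \emph{proper} subpolygon of $\Delta$ and hence has strictly fewer lattice points, contradicting minimality. Producing a cut that misses $x$ is the only real work, and it splits into the cases ``$\Delta$ a triangle'' and ``$\Delta$ a quadrilateral''. When $\Delta=\triangle ABC$ with $p$ in the relative interior of $AB$, I would first try the segment $pC$; if $x\in pC$ then, since $\Delta$ has at least five boundary points, there is a second non-vertex boundary point $q$, and one of the segments $qC$ (if $q$ lies on $AB$) or $pq$ (if $q$ lies on another edge) must miss $x$, because the colinearity of $p,x,C$ together with $p$ being strictly interior to $AB$ obstructs the relevant triple of points from being colinear. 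When $\Delta=ABCD$ with $p$ in the relative interior of $AB$, I would try the diagonal $BD$, then the diagonal $AC$; if $x$ lies on both (hence is their intersection point) then $x\notin pC$, and the segment $pC$ cuts $\Delta$ into the triangle $pBC$ and the quadrilateral $ApCD$, one of which has $x$ in its interior.

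The hard part is exactly this last step: the bookkeeping of the finitely many configurations in which $x$ happens to lie on the first segment one cuts along. Each is dispatched by the same elementary facts — two distinct segments through a common point meet only at that point, and no three of the points $A,B,C,D,p,x$ occurring in the configuration can be colinear once $p$ is forced to be strictly interior to an edge — but they all need to be enumerated. Once the contradiction is reached, $\Delta$ has at most four boundary lattice points, hence at most five lattice points, and we have exhibited a building block polygon $\Delta\subseteq P$.
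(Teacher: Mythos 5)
Your overall strategy matches the paper's: take $\Delta$ minimal among subpolygons of $P$ with an interior lattice point, use Lemmas \ref{lemma:minimal:pol:has:up:to:4:edges} and \ref{lemma:at:most:an:int:pt} together with minimality to pin down that $\Delta$ has at most four edges and exactly one interior point $x$, and then cut $\Delta$ along a lattice segment avoiding $x$ to contradict minimality. Your choice of cuts differs in detail from the paper's (you use segments $pC$, $qC$, $pq$, and the two diagonals, while the paper uses the two segments joining a non-vertex boundary point to the two vertices not on its edge), but each choice is legitimate and your colinearity arguments ruling out $x$ from lying on the chosen segment are correct.

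However, there is a real gap in the step where you bound the boundary lattice points. You want to show $\Delta$ has at most four boundary lattice points, and you negate this as ``$\Delta$ has a boundary lattice point $p$ in the relative interior of an edge.'' For a quadrilateral that is an equivalent negation, but for a triangle it is not: a triangle with $3$ vertices, one extra boundary point $p$, and one interior point has exactly four boundary points, hence at most five lattice points, and \emph{is already a building block}. Concretely, take $\Delta$ with vertices $(0,0)$, $(2,0)$, $(1,2)$, interior point $x=(1,1)$, and $p=(1,0)$; this is a building block, yet it satisfies your negated hypothesis, and moreover $x\in pC$ so your first cut fails. At that point you write ``since $\Delta$ has at least five boundary points, there is a second non-vertex boundary point $q$'' --- but nothing in your setup guarantees this, and in the example it is false. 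The correct negation, and the one the paper uses (``assume it has more than five lattice points''), is ``$\Delta$ has at least five boundary lattice points,'' which for a triangle means at least two non-vertex boundary points, so that $q$ genuinely exists. Once you replace the negation by this one, the rest of your case analysis goes through as you intend.
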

 \begin{proof}
 Consider a polygon $Q\subseteq N$. From Lemma
 \ref{lemma:at:most:an:int:pt} and Lemma \ref{lemma:minimal:pol:has:up:to:4:edges}, we can assume, up to shrinking $Q$, that $Q$ has at most four edges, and a single interior point $x$. If $Q$ has four edges and five points, we are done, otherwise there is an edge $\ell$ with a point $y\in \ell$ which is not a vertex. Let $a,b$ be the two vertices of $Q$ not contained in $\ell$. Then the segments $\overline{ya}$ and $\overline{yb}$ intersect only at $y$, and divide $Q$ into three smaller polygons. One of them must contain $x$ in its interior. Therefore if $Q$ is minimal and has four edges, it must be a building block polygon.
 
 If instead $Q$ is a triangle, assume it has more than five lattice points. Then there are two lattice points $p,q$ which are on the boundary of $Q$, but are not vertices. If they belong to the same edge $\ell$, let $a$ be the vertex of $Q$ not contained in $\ell$. Then the segments $\ell_1:=\overline{ap}$ and $\ell_2:=\overline{aq}$ meet only at $a$, and divide $Q$ into smaller polygons. Then there must be one among $\ell_1$ and $\ell_2$ which does not contain $x$, and which is the side of a smaller polygon contained in $Q$ and with an interior point. 
 Similarly if $p$ and $q$ do not belong to the same edge, let $a$ be the vertex not contained in the edge containing $p$. Then the segments $\ell_1:=\overline{ap}$ and $\ell_2:=\overline{qp}$ intersect only at $p$ and divide $Q$ into smaller polygons. One of them must have an interior point.  

\begin{figure}
	\centering
	\begin{tikzpicture}[scale=0.8] 
		\draw (0.1,2)  -- (-1,0) -- (1.5,0) --(1.6,1.6) -- (0.1,2);
		
		\draw [dashed] (0.1,2) -- (0,0);
		\draw [dashed] (1.6,1.6) --  (0,0);
		\draw[fill=black] (0,0) circle (2pt);
		\draw[fill=black] (1,1) circle (2pt);
		\draw[fill=black] (1.6,1.6) circle (2pt);
		\draw[fill=black] (0.1,2) circle (2pt);
			\draw[fill=black] (1.5,0) circle (2pt);
					\draw[fill=black] (-1,0) circle (2pt);
		\node (no) at (0,-0.3) {$y$};
		\node (no) at (1,1-0.25) {$x$};
		\node (no) at (0.1,2+0.25) {$a$};
		\node (no) at (1.6+0.25,1.6+0.25) {$b$};
			\node (no) at  (1.5+0.25,-0.25) {$c$};
		\node (no) at (0.2,-0.5-0.25) {$\text{Consider the polygon $\overline{abcy}$}$.};
		
	\end{tikzpicture}\hspace{0.5cm}
	\begin{tikzpicture}[scale=0.8] 
		\draw (0,2) -- (-1.5,-1.2) --  (1.5,-1.2)--(0,2);
		\draw [dashed] (-.6,-1.2) -- (0,2) ;
		\draw [dashed] (0.6,-1.2) -- (0,2);
		
		\draw[fill=black] (-.6,-1.2) circle (2pt);
		\draw[fill=black] (.6,-1.2) circle (2pt);
		\draw[fill=black] (0,2) circle (2pt);
		\draw[fill=black] (0.7,-0.6) circle (2pt);
			\draw[fill=black] (-1.5,-1.2) circle (2pt);
					\draw[fill=black] (1.5,-1.2) circle (2pt);
		\node (no) at (-0.6,-1.45) {$p$};
		\node (no) at (0.6,-1.45) {$q$};
		\node (no) at (0.6+0.25,-0.6+0.25) {$x$};
		\node (no) at (0,2.25) {$a$};
		\node (no) at (1.5+0.25,-1.2) {$b$};
		\node (no) at (0.2,-1.7-0.25) {$\text{Consider the triangle $\overline{abp}$}$.};
	\end{tikzpicture} 	\hspace{0.5cm}
	\begin{tikzpicture}[scale=0.8] 
		\draw (0,2) -- (-1,-1) --  (2,0)--(0,2);
		\draw [dashed] (-0.5,0.5) -- (.5,-.5) ;
		\draw [dashed] (.5,-.5) -- (0,2);
		
		\draw[fill=black] (2,0) circle (2pt);
		\draw[fill=black] (-1,-1) circle (2pt);
		\draw[fill=black] (0,2) circle (2pt);
			\draw[fill=black] (.5,-.5) circle (2pt);
		\draw[fill=black] (0.25, 0.75) circle (2pt);	
			
					\draw[fill=black] (-0.5,0.5) circle (2pt);
		\node (no) at (.5,-.75) {$p$};
		\node (no) at (-0.5-0.25,0.75) {$q$};
		\node (no) at (0.5, 0.75) {$x$};
		\node (no) at (0,2.25) {$a$};
		\node (no) at (2+0.25,0) {$b$};
		\node (no) at (0.2,-1.7-0.25) {$\text{Consider the polygon $\overline{abpq}$}$.};
	\end{tikzpicture} 
	
	\caption{Figure for Lemma \ref{lemma:taking:subpolygone:is:projection}.}
\end{figure}

\end{proof}

 \begin{lemma}\label{lemma:taking:subpolygone:is:projection}
 Consider the projective toric surface $X_N$ associated to the polygon $N$, and let $Q\subseteq N$ be a subpolygon of $N$, with associated projective toric surface $X_Q$. The two polygons give rise to projective embeddings $X_N \subseteq \mathbb{P}^n$ and $X_Q \subseteq \mathbb{P}^m$, where $n:=|N \cap \Z^2|-1$ and $m:=|Q \cap \Z^2|-1$. There is a linear projection $\mathbb{P}^n \dashrightarrow \mathbb{P}^m$ which gives a $(\C^*)^2$-equivariant rational map $X_N \dashrightarrow X_Q$, which restricts to an isomorphism of the dense torii $(\C^*)^2$.
 \end{lemma}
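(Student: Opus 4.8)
The plan is to realize both embeddings explicitly by lattice points and to take $\pi$ to be the evident coordinate projection. Label the lattice points of $P$ as $m_0,\dots,m_n\in\mathbb{Z}^2$, so that the embedding $X\hookrightarrow\mathbb{P}^n$ is the one whose restriction to the dense torus $(\mathbb{C}^*)^2\subseteq X$ is $t\mapsto[\chi^{m_0}(t):\cdots:\chi^{m_n}(t)]$, where $\chi^m$ is the character of $(\mathbb{C}^*)^2$ of weight $m$. Since $Q\subseteq P$, every lattice point of $Q$ is also a lattice point of $P$; after renumbering we may assume the lattice points of $Q$ are exactly $m_0,\dots,m_m$ with $m\le n$, so that $Y\hookrightarrow\mathbb{P}^m$ restricts on its torus to $t\mapsto[\chi^{m_0}(t):\cdots:\chi^{m_m}(t)]$. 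Let $\pi\colon\mathbb{P}^n\dashrightarrow\mathbb{P}^m$ be the linear projection $[x_0:\cdots:x_n]\mapsto[x_0:\cdots:x_m]$, with center the coordinate subspace $\Lambda=\{x_0=\cdots=x_m=0\}\cong\mathbb{P}^{n-m-1}$.

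Next I would check that $\pi$ restricts to the desired map. On the dense torus of $X$ the homogeneous coordinates $\chi^{m_i}(t)$ are all nonzero, so this torus is disjoint from $\Lambda$ and $\pi$ is defined there, sending $[\chi^{m_0}(t):\cdots:\chi^{m_n}(t)]$ to $[\chi^{m_0}(t):\cdots:\chi^{m_m}(t)]$, which is precisely the image of $t$ under the standard torus embedding of $Y$. Thus $\pi|_X$, viewed as a rational map $X\dashrightarrow\mathbb{P}^m$, is defined on the dense open torus, where its image is the dense torus of $Y$; since $(\mathbb{C}^*)^2$ is dense in $X$ and $Y$ is closed in $\mathbb{P}^m$, this rational map factors through $Y$, giving $X\dashrightarrow Y$. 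Equivariance is immediate from $\chi^{m_i}(s\cdot t)=\chi^{m_i}(s)\,\chi^{m_i}(t)$, which is compatible with the torus action on both $X$ and $Y$. Finally, identifying the open torus of $X$ and the open torus of $Y$ each with $(\mathbb{C}^*)^2$ via their standard embeddings, the map just computed is $t\mapsto t$, hence restricts to an isomorphism on $(\mathbb{C}^*)^2$; alternatively one exhibits the inverse on tori directly by characters.

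The argument is essentially bookkeeping once the embeddings are written via lattice points; the one point that deserves a short argument rather than inspection is that the coordinate projection of $X$ genuinely lands inside $Y$ — this is the density/closedness observation above, combined with the fact that the center $\Lambda$ of $\pi$ misses the dense torus so that $\pi|_X$ is a well-defined rational map. The only mild subtlety I anticipate is that $X$ may meet $\Lambda$ along some boundary strata, so $\pi|_X$ is in general merely a rational map and not a morphism — but that is exactly what the statement claims, so nothing further is required.
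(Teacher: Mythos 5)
Your proof is correct and takes essentially the same approach as the paper: order the lattice points of $P$ so that those of $Q$ come first, realize both embeddings by the corresponding characters, and take the coordinate projection, observing it restricts to the identity on the dense torus. You spell out the density/closedness bookkeeping that the paper leaves implicit, but the argument is the same.
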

 \begin{proof}
 Consider the characters $\chi_0,\dots,\chi_n$ corresponding to the lattice points in $N$, and let $\chi_0,\dots,\chi_m$ be those corresponding to the lattice points in $Q$. As in Section \ref{subsection_toric_surfaces}, consider the map $\Phi:(\C^*)^2\to \mathbb{P}^n$ given by $p\mapsto [\chi_0(p),\dots,\chi_n(p)]$. The toric variety $X_N$ is the closure of the image of $\Phi$, and similarly $X_Q$ is the closure of the map $\Phi:(\C^*)^2\to \mathbb{P}^m$ sending $p\mapsto [\chi_0(p),\dots,\chi_m(p)]$. Then the projection from the $n-m$ coordinates corresponding to the characters in $(N \setminus Q) \cap \Z^2$ gives the desired rational map.
 \end{proof}
\begin{theorem}\label{theorem:building:block:not:rbl}
Consider the projective toric surface $X_\Delta$ associated to a building block polygon $\Delta$, and let $\chi_0,\dots,\chi_n$ denote the characters corresponding to the lattice points in $\Delta$. Then there exists no line $\ell$ of $\mathbb{P}^n$, contained in $X_\Delta$, passing through the identity of the dense torus $(\C^*)^2\subseteq X$.
\end{theorem}
\begin{proof}
First observe that either $n=3$ or $n=4$, as $n+1$ is the number of lattice points of $\Delta$. Any line in $\mathbb{P}^n$ is the intersection of $n-1$ \emph{linearly independent} hyperplanes $H_1,\dots,H_{n-1}$.
When we restrict these hyperplanes to the torus $(\C^*)^2$, they can be written as $\restr{(H_i)}{(\C^*)^2}=\sum_j a_{i,j}z^i w^j$ where $(z,w)$ are the coordinates on $(\C^*)^2$.

Assume by contradiction that such a line exists. Then when intersected with $(\C^*)^2$ would give a codimension one subset of $(\C^*)^2$: it would be the zero locus of an homogeneous polynomial $f$. Then we can factor $(H_i)|_{(\C^*)^2}=fg_i$ for $g_i\in\mathbb{C}[z^{\pm},w^{\pm}]$. Consider the Newton polygon $P_f$ associated to $f$, defined as follows. If we write $f:=\sum c_{i,j}z^{i}w^{j}$, then $P_f$ is the convex hull of the points $(i,j)$ such that $c_{i,j}\neq 0$. Similarly, if we denote by $P_i$ the Newton polygon of $g_i$, then from 
\cite{Ost76}*{Theorem VI} we have that the Minkowski sum $P_f + P_i$ has vertices corresponding to the characters in $fg_i=(H_i)|_{(\mathbb{C}^{\times})^2}$. Recall (see Section \ref{subsection_toric_surfaces}) that these correspond to lattice points of $\Delta$. In particular,
\begin{center} $P_f + P_i$ is a subpolygon of $\Delta$ for $i=1,\dots,n-1$.
\end{center}
Now we show that a building block polygon cannot contain $n-1$ polygones as above.

First observe that since $f$ vanishes on the identity, there are at least two distinct pairs $(i,j)$ such that $c_{i,j}\neq 0$. In particular, $P_f$ contains a segment $s$.
Then, since the hyperplanes $H_i$ are linearly independent, the set $\bigcup_i P_i$ has at least $n-1$ elements, say ${x_1, \dots, x_{n-1}}$. Therefore the $n-1$ translates $x_i +s$ of $s$ must be contained in $\Delta$. Checking the four building block polygons in Figure \ref{bbpoly}, we see that this is not possible.
\end{proof}
\begin{corollary}\label{corollary:surface:with:interior:pt:not:rbl}
Any projective toric surface $X_N$ whose polygon $N$ contains an interior lattice point is not ruled by lines.
\end{corollary}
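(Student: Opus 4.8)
The plan is to argue by contradiction and reduce to the building-block case treated in Theorem \ref{theorem:building:block:not:rbl}. Suppose $X$, with the equivariant embedding $X\subseteq\mathbb{P}^n$ determined by $P$, is ruled by lines. Since the lines of the ruling cover a dense subset of $X$ and the torus $T=(\mathbb{C}^*)^2$ is dense, some line $\ell_0$ of the ruling meets $T$, say at a point $x_0$; moreover I would take $\ell_0$ to be a generic member of the ruling. The $T$-action on $\mathbb{P}^n$ is diagonal in the coordinates $\chi_0,\dots,\chi_n$, hence sends lines to lines and preserves $X$, so $\ell:=x_0^{-1}\cdot\ell_0$ is again a line contained in $X$, now passing through the identity $e\in T$.

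Next I would apply Proposition \ref{prop:every:pol:contains:a:bb:pol} to fix a building-block polygon $\Delta\subseteq P$, and Lemma \ref{lemma:taking:subpolygone:is:projection} to obtain a linear projection $p\colon\mathbb{P}^n\dashrightarrow\mathbb{P}^m$ inducing an equivariant rational map $X\dashrightarrow Y$, where $Y$ is the projective toric surface of $\Delta$, restricting to an isomorphism on $T$. In particular $p$ is defined and $T$-equivariant on $T$, so it is defined along the dense open subset $\ell\cap T$ of $\ell$, carries $\ell\cap T$ into $Y$, and sends $e$ to the identity of the torus of $Y$. Taking closures, $p(\ell)\subseteq Y$. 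A linear projection sends a line either to a line or to a point; since $\ell_0$, hence $\ell$ (by equivariance of $p$), was chosen generic in the ruling, $p$ does not contract it, so $p(\ell)$ is a genuine line in $\mathbb{P}^m$ passing through the identity of the torus of $Y$. This contradicts Theorem \ref{theorem:building:block:not:rbl} applied to the building-block surface $Y$, which proves that $X$ is not ruled by lines.

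The step that needs the most care is the claim that a generic line of the ruling is not contracted by $p$. This holds because $p\colon X\dashrightarrow Y$ is birational (an isomorphism on the dense torus) and therefore cannot collapse a two-dimensional family of curves to a lower-dimensional set: the incidence variety of the ruling dominates $X$, hence maps dominantly to $Y$ through $p$, so the $p$-image of a generic line cannot be a point. The remaining ingredients — that the $T$-actions on $\mathbb{P}^n$ and $\mathbb{P}^m$ are diagonal and intertwined by $p$, and that translation by $x_0^{-1}$ carries $\ell_0$ to a line through $e$ — are standard features of equivariant embeddings of projective toric varieties and require only routine verification.
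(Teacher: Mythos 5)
Your proof is correct and follows the paper's approach: reduce to a building-block subpolygon via Proposition \ref{prop:every:pol:contains:a:bb:pol}, use the linear projection of Lemma \ref{lemma:taking:subpolygone:is:projection}, and contradict Theorem \ref{theorem:building:block:not:rbl}. You are more careful than the paper on two points it leaves implicit — producing a line through the identity by torus-translating a generic line of the ruling, and verifying that the projection does not contract the generic line (the paper simply asserts it ``sends lines to lines'', which for a linear projection requires justification); both additions are warranted, with only the minor slip that the ruling is a one-parameter, not two-dimensional, family of curves.
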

\begin{proof}From Proposition \ref{prop:every:pol:contains:a:bb:pol}, there is a building block polygon $\Delta \subseteq N$ corresponding to a toric surface $X_\Delta$. From Lemma \ref{lemma:taking:subpolygone:is:projection}, there is a rational map $\pi : X_N\dashrightarrow X_\Delta$. This rational map sends a line not contained in the indeterminacy locus to either a point or a line. So if $X_N$ is ruled by lines, there is a line $\ell$ passing through the identity of the torus in $X_N$. Then $\pi(\ell)$ is a line through the identity of the torus in $X_\Delta$, contradicting Theorem \ref{theorem:building:block:not:rbl}.
\end{proof}
We now to prove the main technical result of this section:
\begin{proposition}\label{prop:appendix}
Consider a projective toric surface $X_N$ associated to the polygon $N$ that contains at least one interior lattice point. Suppose  $L$ is a non-trivial line bundle on $X_N$. Then there is a hyperplane section $C \in |D_N|$ such that $C$ is irreducible and smooth, and such that $L|_C$ is not trivial.
\end{proposition}
\begin{proof}We prove the desired statement by contradiction.
The proof is divided into several steps.

\textit{Step 1.} We show that there is a pencil $\Pi\cong \mathbb{P}^1\subseteq \mathbb{P}(H^0(\mathbb{P}^n,\mathcal{O}_{\mathbb{P}^n}(1)))$ such that for every $x\in \Pi$, the corresponding hyperplane section is irreducible, and the generic member is smooth.

It suffices to show that the locus in $\mathbb{P}(H^0(\mathbb{P}^n,\mathcal{O}_{\mathbb{P}^n}(1)))$ corresponding to reducible hyperplane sections has closure of codimension at least 2. Indeed, otherwise it would contain an $(n-1)$-dimensional subset parameterizing reducible curves. 
Now we use the following:

\begin{lemma}[\cite{Lop91}*{Lemma II.2.4}]\label{lopez}
An irreducible non-degenerate surface $S \subset \mathbb P^n, n \geq 3$, has an $(n-1)$-dimensional family of reducible hyperplane sections if and only if $S$ is either ruled by lines, or is the Veronese surface, or its general projection in $\mathbb P^4$, or its general projection in $\mathbb P^3$ (the Steiner surface).
\end{lemma}
The Veronese surface is toric and  corresponds to the Newton polygon with vertices $$\text{Conv}\{(0,0), (2,0),(0,2)\},$$ and so has no interior lattice points. Since the number of interior lattice points in the Newton polygon is the genus of a generic hyperplane section \cite{CLS11}*{Proposition 10.5.8}, its general projections
have hyperplane sections of genus $0$. Since the generic hyperplane sections of $X_N$ have genus at least $1$, it is not the Veronese surface or its projections. Therefore by Lemma \ref{lopez} and Corollary \ref{corollary:surface:with:interior:pt:not:rbl}, we can find a generic pencil of hyperplane sections in $X_N$ such that all the members are irreducible. Since being smooth is an open condition (\cite[Theorem 12.2.4]{EGAIV}), we can also assume that the generic member is smooth.

Such a pencil $\Pi$ gives a rational map $X_N \dashrightarrow \mathbb{P}^1$, where the indeterminacy locus consists of points where the two hyperplane sections generating the pencil intersect. We can blow-up the toric surface $\pi:Y \to X_N$ to resolve the indeterminacy locus, and therefore obtain a morphism $f:Y\to \mathbb{P}^1$ whose fibers are the members of the pencil $\Pi$.

\textit{Step 2.} We want to show that $f_*(\mathcal{O}_Y)= \mathcal{O}_{\mathbb{P}^1}$.

The morphism $f$ is:
\begin{enumerate}
    \item flat since it is dominant with target a smooth curve (see \cite[Proposition III.9.7]{Hart}),
    \item proper since the source is proper and the target separated, (see \cite[Corollary II.4.8 (e)]{Hart}), and 
    \item  generically smooth since the generic member of $\Pi$ is smooth. 
\end{enumerate}  Observe that $f_*(\mathcal{O}_Y)$ is a torsion-free sheaf, since already $\mathcal{O}_Y$ has no zero divisors. Therefore, since the local rings of $\mathcal{O}_{\mathbb{P}^1}$ are DVRs and torsion free modules over a DVR are free, the sheaf $f_*(\mathcal{O}_Y)$ is locally free: it is a vector bundle. To check its rank, observe that there is a fiber $Y_p$ of $f$ at a point $p$ which is smooth and connected (the smooth irreducible member of $\Pi$). Therefore $h^0(\mathcal{O}_{Y_p})=1$, and from \cite{Vak17}*{28.1.1} there is an open subset $U\subseteq \mathbb{P}^1$ such that for $x \in U$ we have
$h^0(\mathcal{O}_{Y_x})=1$. Then from \cite{Vak17}*{28.1.5} this is the rank of $f_*(\mathcal{O}_Y)$ at $x$. In particular, the latter is a line bundle. But from the definition of push forward, $H^0(\mathbb{P}^1,f_*(\mathcal{O}_Y)) = H^0(Y,\mathcal{O}_Y)\cong \mathbb{C}$: we have that $f_*(\mathcal{O}_Y)$ is a line bundle on $\mathbb{P}^1$ with a single global section. From the description of the line bundles on $\mathbb{P}^1$ we have the desired isomorphism $f_*(\mathcal{O}_Y)\cong \mathcal{O}_{\mathbb{P}^1}$.

\textit{Step 3.} We prove that $\pi^*L = f^* G$ for a line bundle $G$ on $\mathbb{P}^1.$

Recall that by contradiction we are assuming that for every member $C$ of $\Pi$, $L|_C\cong \mathcal{O}_C$. The members of $\Pi$ are the fibers of $f$, thus for every fiber $F$ of $f$ we have $\pi^*(L)|_F\cong \mathcal{O}_F$. Then from \cite{Vak17}*{Proposition 28.1.11}, there is a line bundle $G$ on $\mathbb{P}^1$ such that $\pi^*(L)\cong f^*(G)$.

\textit{End of the argument.}
We proceed as in the second paragraph of \cite{Sta16}. We report it below for the convenience of the reader. 

Recall that the fibers of $f$ are elements of the linear series of $X_{N}\to \mathbb{P}^n$. We first check that if $U$ is the dense open subset where $\pi$ is an isomorphism, we have $\pi^*\mathcal{O}_{X_N}(1)_{|U} = f^*\mathcal{O}_{\mathbb{P}^1}(1)_{|U}$. Indeed, we check this equality for divisors, so let $H$ be a member of the pencil $\Pi$.
Then $\pi^*\mathcal{O}_{X_N}(1)|_U = \pi^*\mathcal{O}_{X_N}(H)|_U = \mathcal{O}_Y(\pi^{-1}H)|_U $: the line bundle $\pi^*\mathcal{O}_{X_N}(1)|_U$ is the line bundle associated to the divisor being the pull back of a  hyperplane section. When we restrict it to $U$, this agrees with the proper transform of $H$ restricted to $U$. Similarly, $\mathcal{O}_{\mathbb{P}^1}(1)$ is the line bundle associated to the divisor being a point on $\P^1$, so $f^*\mathcal{O}_{\mathbb{P}^1}(1)|_U$ is the line bundle associated to the divisor being the preimage of a point under $f$: a fiber of $f$. The desired equality follows since the proper transform of $H$ is a fiber of $f$.


Then $\pi^*\mathcal{O}_{X_N}(m)|_U = f^*\mathcal{O}_{\mathbb{P}^1}(m)|_U$ for every $m\in \mathbb{Z}$. But $G$ is a line bundle on $\mathbb{P}^1$, so $G = \mathcal{O}_{\mathbb{P}^1}(m)$ for a certain $m$ and $\pi^*L|_U = f^*\mathcal{O}_{\mathbb{P}^1}(m)|_U = \pi^*\mathcal{O}_{X_N}(m)|_U$. Then $$L|_U = L\otimes \pi_*(\mathcal{O}_Y)|_U = \pi_*(\pi^* L \otimes \mathcal{O}_Y)|_U  =  \pi_*\pi^* L|_U = \pi_*\pi^*\mathcal{O}_{X_N}(m)|_U=\mathcal{O}_{X_N}(m)|_U$$
where the first equality follows from \cite[Proposition V.3.4]{Hart}, and the second equality is the projection formula (\cite[Exercise II.5.1]{Hart}).

Now since $X_N$ is normal, if two line bundles are isomorphic on a subset with complement of codimension at least 2, then they are isomorphic. Therefore $L \cong \mathcal{O}_{X_N}(m)$.

But then $\mathcal{O}_{X_{N}}(m)$ restricts to the trivial line bundle on every section, so $m = 0$ and $L$ is trivial: this is the desired contradiction.
Therefore there must be a fiber $F$ of $f$ such that $\pi^*(L)|_F$ is not trivial. Then from Lemma \ref{lemma:being:a:trivial:line:bundle:is:closed}, there is an open subset $U\subseteq \mathbb{P}^1$ where for every $p\in U$ we have $\pi^*(L)|_{Y_p}$ is not trivial, and $Y_p$ is smooth (being smooth is an open condition, see \cite[Theorem 12.2.4]{EGAIV}).
\end{proof}
The following Lemma is well known, we provide a proof for completeness.
\begin{lemma}\label{lemma:being:a:trivial:line:bundle:is:closed}
Consider a flat proper morphism $X\to B$ with integral fibers, and let $L$ be a line bundle on $X$. Then the set $\{b\in B$ such that $L|_{X_b}\cong \cO_{X_b}\}$ is closed.
\end{lemma}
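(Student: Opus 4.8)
The plan is to identify the set in question with the intersection of two closed subsets that arise from the semicontinuity theorem. Write $f\colon X\to B$ for the given morphism and, for a coherent sheaf $\mathcal F$ on $X$ and $b\in B$, put $h^0(X_b,\mathcal F):=\dim_{\kappa(b)}H^0(X_b,\mathcal F|_{X_b})$. First I would record a fiberwise triviality criterion: if $Y$ is a proper integral scheme over a field $k$ and $M$ is a line bundle on $Y$, then $M\cong\mathcal O_Y$ if and only if $H^0(Y,M)\neq 0$ and $H^0(Y,M^{-1})\neq 0$. The ``only if'' direction is clear. For the converse, choose nonzero sections $s\in H^0(Y,M)$ and $t\in H^0(Y,M^{-1})$; since $Y$ is integral, neither section vanishes at the generic point, so $st\in H^0(Y,\mathcal O_Y)$ is nonzero. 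But $H^0(Y,\mathcal O_Y)$ is a finite $k$-algebra which is a domain (because $Y$ is proper and integral), hence a field, so $st$ is a unit and therefore nowhere vanishing; then $s$ is nowhere vanishing as well, and it furnishes an isomorphism $\mathcal O_Y\cong M$.

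Applying this criterion to each fiber $X_b$ (integral by hypothesis) with $M=L|_{X_b}$ yields
\[
\{\,b\in B : L|_{X_b}\cong\mathcal O_{X_b}\,\}=\{\,b\in B : h^0(X_b,L)\ge 1\,\}\cap\{\,b\in B : h^0(X_b,L^{-1})\ge 1\,\}.
\]
Now $L$ and $L^{-1}$ are coherent sheaves on $X$ that are flat over $B$ (being locally free), and $f$ is proper, so the semicontinuity theorem (see e.g.\ \cite[Chapter 28]{Vak17}) shows that $b\mapsto h^0(X_b,L)$ and $b\mapsto h^0(X_b,L^{-1})$ are upper semicontinuous on $B$. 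Hence both sets on the right-hand side are closed, and therefore so is their intersection, which is exactly the set in the statement. In the application to Proposition \ref{prop:appendix} one has $B=\mathbb P^1$ and $X$ a projective surface, so whatever Noetherianity or projectivity hypotheses are demanded by the precise form of semicontinuity one cites are automatically satisfied.

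I expect no serious obstacle here: the whole content is the fiberwise criterion, and within that the only delicate point is that $H^0(X_b,\mathcal O_{X_b})$ is a field, which is precisely where the hypotheses ``$f$ proper'' and ``fibers integral'' are used. Everything after the criterion is a formal appeal to upper semicontinuity of $h^0$ in a proper flat family.
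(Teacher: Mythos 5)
Your proposal is correct and follows essentially the same route as the paper: decompose the locus as the intersection of the two closed sets $\{h^0(L|_{X_b})>0\}$ and $\{h^0(L^{-1}|_{X_b})>0\}$ via semicontinuity, then invoke the criterion that a line bundle on an integral proper $k$-scheme with nonzero sections of itself and its dual is trivial. The only difference is cosmetic: the paper cites the first paragraph of Mumford's proof of the Seesaw theorem for that criterion, whereas you supply the (correct) short argument in-line.
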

\begin{proof}
From the upper-semicontinuity theorems \cite{Vak17}*{28.1.1} the set $b\in B$ where $h^0(L|_{X_b})>0$ and $h^0(L^{-1}|_{X_b})>0$ is closed. It suffices to prove that if one has a line bundle $G$ on an integral proper (over $\mathbb{C}$) scheme $Y$, then $h^0(G)>0$ and $h^0(
G^{-1})>0$ if and only if $G\cong \cO_Y$.
This is the first paragraph of the proof of the Seesaw theorem \cite{Mum74}.
\end{proof}

We are now ready to prove Theorem \ref{theorem:appendix1}. The argument is roughly as follows. First we consider a parameter space for linear subspaces of $\mathbb{P}^n$: this is the dual projective space, namely $(\mathbb{P}^n)^{\vee}$. Then we construct space a $\cC$ together with a morphism $\pi:\mathcal{C}\to (\mathbb{P}^n)^{\vee}$ whose fiber over the point corresponding to a hyperplane is the curve $H\cap X_{N}$.  We check that $\pi$ is flat and proper.
Then we use Lemma \ref{lemma:being:a:trivial:line:bundle:is:closed} and the fact that being smooth is an open condition for flat and proper morphisms to argue that the set $H\in (\mathbb{P}^n)^{\vee}$ such that $L|_{H\cap X_{N}}$ is not trivial and $H\cap X_{N}$ is smooth is open inside $(\mathbb{P}^n)^{\vee}$. To conclude the argument we just need to check that this open set is not empty: this is Proposition \ref{prop:appendix}.

\begin{proof}[Proof of Theorem \ref{theorem:appendix1}]
 Let $(\mathbb{P}^n)^\vee$ be the projective dual projective space of $\mathbb{P}^n$. Consider the generic hyperplane section  \begin{center}
    $\cH := \{(x,H)\in \mathbb{P}^n\times (\mathbb{P}^n)^\vee: x \in H\}.$
\end{center}
The closed embedding $X_N \hookrightarrow \mathbb{P}^n$ given by the polytope $N$ gives the closed embedding $X_N \times (\mathbb{P}^n)^\vee \hookrightarrow \mathbb{P}^n \times(\mathbb{P}^n)^\vee$. We can construct the fibred product $$\cC:= \cH\times_{\mathbb{P}^n\times (\mathbb{P}^n)^\vee}X_N \times (\mathbb{P}^n)^\vee.$$
From the universal property of a fiber product, a point of $\cC$ corresponds to two points $a\in \mathcal{H}$ and $b \in X_N \times (\mathbb{P}^n)^\vee$ which map to the same point in $\mathbb{P}^n\times (\mathbb{P}^n)^\vee$. We can understand the space $\cC$ via its morphism $\pi:\cC \to (\mathbb{P}^n)^\vee$: a fiber of $\pi $ over the point of $(\mathbb{P}^n)^\vee$ corresponding to the hyperplane $H$ is the intersection $H\cap X_N$, i.e. it is a hyperplane section in $X_N$. We now check that $\pi$ is proper and flat.

For properness, observe that $\cC \to \cH$ is a closed embedding, since being a closed embedding is stable under base change and $X_N \times (\mathbb{P}^n)^\vee \hookrightarrow \mathbb{P}^n \times(\mathbb{P}^n)^\vee$ is a closed embedding. Moreover, $\cH \to (\mathbb{P}^n)^\vee$ is proper. So the composition $\pi:\cC \to (\mathbb{P}^n)^\vee$ is proper as it is the composition of a closed embedding and a proper morphism.

We check that the morphism $\pi$ is flat. From \cite{Vak17}*{24.7.A, (d)} it suffices to check that all the fibers have the same Hilbert polynomial.
For every hyperplane section $H$, we have a morphism $\cdot H:\cO_{\mathbb{P}^n}(-1)\to \cO_{\mathbb{P}^n}$ given by  multiplication by the polynomial that gives the equation of $H$. Since we are embedding $X_N$ in $\P^n$ using a basis of sections of $D_N$, the embedding is non-degenerate, i.e. not contained in a linear subspace of smaller dimension. In particular, the polynomial that gives $H$ is not the zero polynomial.  $\cO_{X_{N}}$ is a domain, so multiplication by a non-zero element is injective. Therefore we have the following exact sequence: 
$$0 \to \cO_{X_N}(-1)\xrightarrow{\cdot H} \cO_{X_N}\to \cO_C \to 0,$$
where $C:=X_N\cap H$. Then by definition of the Hilbert polynomial, we see that the Hilbert polynomial of $C$ does not depend on $H$. Therefore all the fibers of $\pi$ have the same Hilbert polynomial, so $\pi$ is flat.

We can take the pull-back of $L$ to $X_N\times (\mathbb{P}^n)^\vee$ and to $\cC$ to get a line bundle $G$ on $\cC$ which along each fiber $C=H\cap X_N$ of $\pi$ restricts to $L|_C$.
From Proposition \ref{prop:appendix}, there is a smooth fiber $F$ of $\pi$ such that $G|_F \ncong \cO_{F}$. We can replace $(\mathbb{P}^n)^\vee$ with the locus $U\subseteq (\mathbb{P}^n)^\vee$ where $\pi$ is smooth (which is open from \cite[Theorem 12.2.4]{EGAIV}, and contains the fiber $F$). Then Lemma \ref{lemma:being:a:trivial:line:bundle:is:closed} applies, giving the desired result.
\end{proof}
\section{Other interpretations of the (2-2) cluster modular group}
In this section, we discuss two other ways of describing the (2-2) cluster modular group. This section is not essential to the rest of the paper, and may be safely skipped.
\subsection{Translation of a zonotope}\la{sec:ef}
The first alternate description comes from mathematical physics. Eager and Franco \cite{EF}*{Section 3} consider the group $\Z^{V_N}/\Z$ of functions on the vertices of $N$ up to an additive constant and take its quotient by the homology group $H_1(\T,\Z)$. This group is isomorphic to the group $\Z^{\Sigma(1)}_0/jH_1(\T,\Z)$ considered by Fock and Marshakov \cite{FM16} that is defined in Section \ref{sec:fockmarsh}. Recall that $\Sigma(1)$ is in bijection with $E_N$. Given a function $g \in \Z^{\Sigma(1)}_0$, we obtain a function $\widetilde g \in \Z^{V_N}$ defined as follows:  Let $E$ be an edge of $N$ oriented counterclockwise along the boundary of $N$, and let $V_1$ and $V_2$ denote its head and tail respectively. Define $\widetilde g(V_1)-\widetilde g(V_2)=g(E)$. 

Eager and Franco then associate to each vertex $V \in V_N$ an integral vector $\widehat E_V$ in $\Z^{\Sigma(1)}_0/jH_1(\T,\Z)$ defined as follows: Let $\rho, \sigma \in \Sigma(1)$ be the two consecutive rays in counterclockwise cyclic order such that the two dimensional cone spanned by them is dual to the vertex $V$. The vector $\widehat E_V$ is  $\delta_{\rho}-\delta_{\sigma}$, which are the generators we considered in Section \ref{sec:surj}. Eager and Franco then consider the zonotope $Q$ in $\Z^{\Sigma(1)}_0/jH_1(\T,\Z) \otimes_\Z \R$ defined by the vectors $\widehat E_V$ and its dual polytope $\widehat P$. 
\paragraph{Claim 1.}
For a bipartite graph $\Gamma$ with Newton polygon $N$, the interior lattice points of some translate of $\frac{1}{2}\widehat{P}$ are naturally identified with the faces of $\Gamma$ (equivalently the vertices of the underlying quiver). \\

When the edges of $N$ are primitive, this identification is given by the discrete Abel map $f \mapsto {\bf d}(f)$. In \cite{EF}*{Section 8}, they provide the following procedure for generating sequences of spider moves (or \textit{Seiberg duality cascades} as they are called in the mathematical physics literature). As the zonotope $\frac 1 2 \widehat P$ is translated, each time a lattice point moves out of it, another lattice point enters it, and the faces corresponding to these lattice points are related by a spider move. Therefore translations of the zonotope $\frac 1 2 \widehat P$ induce sequences of spider moves. Translations by the vectors $\widehat E_V$ are called \textit{basic periodic transformations}, and as we mentioned, they coincide with the generators of the (2-2) cluster modular group that we use in the proof of surjectivity of $\psi$ in Section \ref{sec:surj}. Therefore Eager and Franco's description agrees with that of Fock and Marshakov, and if Claim 1 is rigorously established, will provide a different proof of surjectivity of $\psi$. 
\subsection{Picard group of the toric stack}\label{picstack}
This section uses technical notions from algebraic geometry and we refer the reader to \cite{TWZ18} for further background. 

In the \cite{FM16}*{Section 7.3}, Fock and Marshakov provide an alternate description of $\Z^{\Sigma(1)}_0/H_1(\mathbb T,\Z)$ as the group of divisor classes on the toric surface $X_N$ that restrict to degree $0$ divisors on a generic spectral curve $C$. While this is true for polygons whose sides are all primitive, the following example shows that it needs to be modified in the general case. The correct general statement is Proposition \ref{propp2} below.
\begin{example}\la{eg:prim}

\begin{figure}
	\centering
	
	\begin{tikzpicture}[scale=1] 
		\draw (2,-1) -- (2,1) -- (-1,1) -- (-1,-1)--(2,-1);
		\draw[fill=black] (0,0) circle (2pt);
		\draw[fill=black] (1,1) circle (2pt);
		\draw[fill=black] (0,1) circle (2pt);
		\draw[fill=black] (2,0) circle (2pt);
		\draw[fill=black] (2,1) circle (2pt);
		\draw[fill=black] (2,-1) circle (2pt);
		\draw[fill=black] (-1,1) circle (2pt);
		\draw[fill=black] (1,0) circle (2pt);
		\draw[fill=black] (-1,0) circle (2pt);
		\draw[fill=black] (0,-1) circle (2pt);
		\draw[fill=black] (-1,-1) circle (2pt);
		\draw[fill=black] (1,-1) circle (2pt);
	\end{tikzpicture}\hspace{1cm}
	\caption{The polygon in Example \ref{eg:prim}.}
	\label{fig:prim}
\end{figure}
Consider the polygon $N$ shown in Figure \ref{fig:prim}. Let
\[
u_1=(-1,0), \quad u_2=(0,-1), \quad u_3=(1,0), \quad u_4=(0,1)
\]
denote the primitive vectors generating the rays of the dual fan $\Sigma(1)$, and let $E_1,E_2,E_3,E_4$ denote the corresponding sides of $N$. In the basis $(u_1,u_2,u_3)$ for $\Z_0^{\Sigma(1)}$, the embedding $j$ is given by
\[
\begin{pmatrix}
-2&0\\
0&-3\\
2&0
\end{pmatrix}.
\]
Computing the Smith normal form, we get 
$
\Z_0^{\Sigma(1)}/jH_1(\T,\Z) \cong  \Z \oplus \Z/6\Z.
$

On the other hand, the divisor class group of the toric surface $X_N$ is
the cokernel of $\Z^2 \xrightarrow[]{\tilde j} \Z^{\Sigma(1)}$ where $\tilde j$ is given by the matrix
\[
\begin{pmatrix}
-1 & 0\\
0&-1\\
1&0\\
0&1
\end{pmatrix}.
\]

For a divisor $D$ represented by the element $(a_1,a_2,\dots,a_4) \in \Z^{\Sigma(1)}$, its degree upon restriction to a generic spectral curve is $\sum_{i=1}^4 |E_i| a_i=2 a_1 + 3 a_2 + 2 a_3+3a_4$.

$ \Z^{\Sigma(1)}/\tilde j \Z^2$ is isomorphic to $\Z^2$ where the isomorphim is $(a_1,a_2,a_3,a_4) \mapsto (a_1+a_3,a_2+a_4)$, so the group of divisors that have degree $0$ when restricted to a generic spectral curve is isomorphic to $\Z$. We can identify this group with the subgroup $\{(2a_1,3a_2,2a_3,3a_4):2 a_1 + 3 a_2 + 2 a_3+3a_4=0\}/jH_1(\T,\Z)$ of $\Z_0^{\Sigma(1)}/jH_1(\T,\Z)$. 
\end{example}
The problem is that when we restrict a toric divisor to a curve, we get the same multiplicity for all points at infinity of the curve intersecting that divisor, so we get only a subgroup of $G_N$. We can fix this by replacing the toric variety $X_N$ with a toric stack $\mathscr{X}_N$ which has divisors that are fractions of toric divisors as we now explain. Associated to $N$ is a a pair of data $\boldsymbol \Sigma=(\Sigma, \beta)$, called a \textit{stacky fan}, defined as follows:
\begin{enumerate}
    \item $\Sigma$ is the normal fan of $N$ in $H_1(\T,\Z)^\vee \otimes \R$;
    \item $\beta :\Z^{|\Sigma(1)|} \ra H_1(\T,\Z)^\vee$ is the homomorphism defined by $\beta(e_\rho):=|E_\rho| u_\rho$.
\end{enumerate}
In other words, the stacky fan is the data of the dual fan along with the lengths of the edges of $N$.

Just as the normal fan $\Sigma$ of $N$ can be used to construct toric surface $X_N$,  the stacky fan $\boldsymbol \Sigma$ gives rise to a stacky toric surface $\mathscr X_N$. The stacky toric surface $\mathscr X_N$ has as coarse moduli space the toric variety $X_N$; let us denote by $\pi:\mathscr X_N \ra X_N$ the projection. Let $\mathcal O_{\mathscr X_N} \left(\frac{1}{|E_\rho|}D_{\rho}\right)$ be the unique line bundle on $\mathscr X_N$ satisfying

$$\mathcal O_{\mathscr X_N} \left(\frac{1}{|E_\rho|}D_{\rho}\right)^{\otimes |E_\rho|} \cong \pi^*\mathcal O_{X_N}(D_\rho).$$

Just as the divisor class group of the toric surface $X_N$ is generated by toric divisors, the Picard group of the toric stack $\mathscr X_N$ is generated by the line bundles $\mathcal O_{\mathscr X_N} \left(\frac{1}{|E_\rho|}D_{\rho}\right)$. 

\begin{theorem}[Borisov and Hua, 2009 \cite{BH09}*{Proposition 3.3}]\label{bhthm}
The following is an isomorphism of groups:
\begin{align*}
    \Z^{\Sigma(1)}/H_1(\T,\Z) &\ra \text{Pic }(\mathscr X_N)\\
    f &\mapsto \mathcal O_{\mathscr X_N} \left(\sum_{\rho}\frac{f(E_\rho)}{|E_\rho|}D_{\rho}\right)
\end{align*} 
\end{theorem}
\begin{proposition}\la{propp2}
This isomorphism identifies $\mathbb Z^{\Sigma(1)}_0/j H_1(\T,\Z)$ with the subgroup of $\text{Pic }(\mathscr X_N)$ of line bundles $\mathcal O_{\mathscr X_N}(D)$ where $D=\sum_\rho{b_\rho}D_\rho$, satisfying 
$$
\sum_\rho |E_\rho| b_\rho=0.
$$
\end{proposition}

There is a version of Theorem \ref{fockthm} which illuminates this correspondence. 
\begin{theorem}[Treumann, Williams and Zaslow, 2018 \cite{TWZ18}*{Proposition 1.2}]
Let $t$ be a seed cluster transformation. Let $\mathscr C = C \times_{X_N} \mathscr X_N$ and let $i:\mathscr C \hookrightarrow \mathscr X_N$ be the embedding. We have:
$$
\mathcal O_{\mathscr C}(S_t) \cong \mathcal O_{\mathscr C}(S) \otimes i^*\mathcal O_{\mathscr X_N}\left( \sum_\rho \frac{\psi(E_\rho)}{|E_\rho|}D_\rho\right).
$$
\end{theorem}

\appendix

\bibliography{references}
\Addresses
\end{document}